\newcommand{\bfg}{{\mathbf g}}
\newcommand{\bfh}{{\mathbf h}}
\newcommand{\bfp}{{\mathbf p}}
\newcommand{\bfs}{{\mathbf s}}
\newcommand{\bfB}{{\mathbf B}}
\newcommand{\bfC}{{\mathbf C}}
\newcommand{\bfI}{{\mathbf I}}
\newcommand{\bfM}{{\mathbf M}}
\newcommand{\bfP}{{\mathbf P}}
\newcommand{\bfS}{{\mathbf S}}
\newcommand{\bfT}{{\mathbf T}}
\newcommand{\bfa}{{\mathbf a}}
\newcommand{\bfb}{{\mathbf b}}
\newcommand{\bfe}{{\mathbf e}}
\newcommand{\bff}{{\mathbf f}}
\newcommand{\bfr}{{\mathbf r}}
\newcommand{\bfu}{{\mathbf u}}
\newcommand{\bfv}{{\mathbf v}}
\newcommand{\bfw}{{\mathbf w}}
\newcommand{\bfy}{{\mathbf y}}
\newcommand{\R}{{\mathbb R}}
\newcommand{\Z}{{\mathbb Z}}
\newcommand{\ol}[1]{{\overline{#1}}}
\newcommand{\wh}[1]{{\widehat{#1}}}
\DeclareMathOperator{\Hom}{{Hom}}
\DeclareMathOperator{\id}{Id}
\newtheorem{theorem}{Theorem}
\newtheorem{conjecture}[theorem]{Conjecture}
\newtheorem{corollary}[theorem]{Corollary}
\newtheorem{lemma}[theorem]{Lemma}
\theoremstyle{definition}
\newtheorem{definition}[theorem]{Definition}
\newtheorem{remark}[theorem]{Remark}
\newtheorem{example}[theorem]{Example}
\begin{document}

\title{Characteristic tensors for almost Finsler manifolds}

\author{\fnm{James F.}\sur{Davis$^{1,2}$}}

\author{\fnm{Benjamin R.}\sur{Edwards$^2$}}

\author{\fnm{V.\ Alan}\sur{Kosteleck\'y$^{1,2}$}}

\affil{\hfil$^1$Department of Mathematics and Department of Physics\hfil \\ 
Indiana University, Bloomington, IN 47405 USA}

\affil{\hfil$^2$Indiana University Center for Spacetime Symmetries\hfil \\ 
Indiana University, Bloomington, IN 47405 USA}

\abstract{
Almost Finsler manifolds and partial Finsler manifolds are introduced,
extending the standard definition of a Finsler manifold
to allow for a nontrivial slit 
containing points fixed under homogeneous scaling
and for metrics where the fundamental tensor has nonpositive eigenvalues.
The bipartite spaces offer examples 
of comparatively simple almost Finsler manifolds 
and partial Finsler manifolds with physics applications.
Special cases are the $\bfa$ and $\bfb$ spaces,
which have almost Finsler norms and partial Finsler norms 
formed from a Riemannian norm and a 1-form.
The indicatrix union of the almost Finsler $\bfa$ manifolds 
equals the indicatrix union of Randers spaces.
Characteristic tensors that vanish for bipartite spaces and $\bfb$ spaces
are obtained and expressed using geometric quantities.
These tensors are generalizations of the Matsumoto tensor,
which vanishes on Randers and $\bfa$ spaces.

\vskip 30pt
\hfil
{\large 
Published as J.\ Geom.\ Anal.\ {\bf 36}, 141 (2026)}
\hfil
}

\maketitle

\section{Introduction and summary}

A Finsler manifold~\cite{br,pf}
is a natural generalization of a Riemannian manifold
in which the tangent space at each point is assigned a Minkowski norm.
The canonical Finsler inner product depends 
both on location in the manifold and direction in the tangent space.
The infinitesimal distance between two neighboring points
is determined by a homogeneous symmetric differential 2-form.
In particular,
we can measure the length of a path (independent of 
parametrization) in a Finsler manifold and study geodesics.
Riemannian geometry is the limiting case 
where the canonical inner product is Euclidean
and independent of direction,
with the infinitesimal distance determined 
by a symmetric quadratic differential 2-form.

A smoothly varying set of Minkowski norms over a manifold
is called a Finsler norm $F$.
A central challenge is to determine whether two given Finsler norms
correspond to distinct geometries.
Geometrical characterizations of certain Finsler norms are known. 
Cartan identified the Riemannian subset of Finsler manifolds 
via a symmetric torsion 3-tensor $\bfC$
constructed from $F$ and its derivatives~\cite{ec},
with $\bfC$ vanishing if and only if the Finsler manifold 
is a Riemannian manifold~\cite{ad}.
Similarly,
the Randers subset~\cite{gr} of Finsler manifolds
constructed from a Riemannian metric and a 1-form
is characterized by the symmetric Matsumoto 3-tensor $\bfM$~\cite{mm},
which vanishes iff the Finsler manifold is a Randers manifold~\cite{mh}.

The geodesics in a large class of Finsler manifolds 
have recently been identified with continuations 
of the classical trajectories of relativistic spinor wave packets 
interacting with a background~\cite{ak11}.
These spaces provide natural generalizations of  Randers geometry,
which is known to govern the Zermelo problem of navigation 
in the presence of a background wind~\cite{ez,zs,dbcr}.
In the physics context,
these Finsler manifolds originate 
from the extension of Einstein's general relativity 
to incorporate explicit violation of local Lorentz symmetry,
which can induce a clash between dynamics and geometry
and hence present an obstacle to a Riemannian description
of the geometry~\cite{ak04}.
Identifying characteristic tensors that can distinguish 
members of this class of Finsler manifolds is an intriguing open challenge,
and this work represents an initial effort to address it.

An interesting subset of these spaces are the bipartite spaces,
for which the Finsler norm $F$ is built as the sum or difference 
of a Riemannian norm $\rho$ and a seminorm $\sigma$ 
constructed from a symmetric nonnegative 2-form $\bfs$~\cite{ak11}.
The comparatively simple nature of this structure
makes it readily amenable to investigation,
while incorporating many cases of relevance. 
Special examples of bipartite spaces
include the $\bfa$ spaces~\cite{ak11},
which are related to Randers spaces;
the $\bfb$ and ${\bf ab}$ spaces~\cite{ak11},
which are related to $(\alpha,\beta)$ spaces~\cite{mm92,cs05};
certain $\bf H$ spaces,
some of which are isomorphic to ${\bf ab}$ spaces~\cite{krt12};
the ${\bf face}$ spaces~\cite{kr10},
which are also related to Randers spaces;
certain $\bf d$ and $\bf g$ spaces~\cite{rs18};
some $\bf k$ spaces~\cite{ek18};
and spaces with specific $\bfs$~\cite{sma19}.
A bipartite space has a nontrivial slit $S$
that includes the 0-section as usual for a Finsler manifold,
along with additional points that are fixed under scaling 
and hence lack the homogeneity of a Minkowski norm.
In the physics context,
the presence of these fixed points originates
in degeneracies of the quartic algebraic variety
governing the dynamics of the motion,
which in turn stem from the spinor nature of the particle.
The presence of the fixed points can affect the convexity of a bipartite space,
leading to points where the fundamental tensor 
has one or more nonpositive eigenvalues.

In the present work,
we introduce the notions of 
almost Finsler manifold and partial Finsler manifold,
which provide a mathematical basis 
for treating the relevant class of Finsler manifolds with nontrivial slits. 
As a simple application,
the formalism is used to clarify 
the relationships mentioned above
that link the $\bfa$ and $\bfb$ bipartite spaces 
to Randers spaces in $n$ dimensions.
The Matsumoto tensor $\bfM$ vanishes for the $\bfa$ spaces~\cite{ak11},
and we show here that the union of the $\bfa$-space indicatrices
matches the union of the Randers indicatrices.
The $\bfa$ spaces are thus examples of almost Finsler manifolds
that obey the same Matsumoto condition as Randers manifolds.
For $n=2$,
the union of the $\bfb$-space indicatrices also matches 
those of the $\bfa$ spaces and the Randers spaces,
but for $n>2$ the union of $\bfb$-space indicatrices 
is a distinct space. 

The main result of this work is the derivation 
of a symmetric 3-tensor $\bfS$ 
that vanishes for the almost Finsler manifolds 
that are bipartite spaces.
Since a Finsler geometry is specified by its Finsler norm $F$,
any geometric object must be expressible
in terms of $F$ and its derivatives. 
The indicatrix $F=1$ represents the level curve 
for the 1-homogeneous geometry,
so its shape is a geometric marker for the Finsler manifold.
This implies that repeated differentiation of the indicatrix
yields conditions that characterize the geometry,
but expressing these in geometric terms
is typically challenging because $F$ is nonpolynomial.
Here,
we instead identify suitable polynomials or near polynomials of $F$
having derivatives that can be identified with geometric quantities.
For bipartite spaces,
where the level curves of $F$ obey a quartic equation
rather than the simpler quadratic equation for Randers level curves,
one might expect up to four derivatives of $F$ to appear.
Remarkably,
this complication can be avoided for bipartite geometries
by taking advantage of the existence of the Riemann norm $\rho$.

Our procedure yields a symmetric 3-tensor $\bfS$
that vanishes for bipartite spaces
and that can be expressed using the geometric quantities
$F$ and $\Delta = F- \rho$, 
along with their derivatives.
Relevant objects include the Cartan torsion $\bfC$
and the mean Cartan torsion $\bfI$,
the angular metric $\bfh$ and the fundamental tensor $\bfg$,
and a quantity $\kappa$ that can be expressed
in terms of $F$, $\rho$ and other geometric objects.
Stated in the local coordinates defined in Sec.~\ref{Tensors}
and denoting a cyclic sum over three indices by $\sum_{(jkl)}$,
we prove the following theorem for bipartite spaces.

\begin{theorem}
The characteristic tensor 
$$
S_{jkl} = C_{jkl} 
- \frac{1}{\kappa}
\sum_{(jkl)}
\left(I_j + \frac{F^2}{(F-\Delta)\Delta} g^{kl}\Delta_k\Delta_{lj}\right)
\left(h_{kl} - \frac{F^2}{\Delta} \Delta_{kl}\right)
$$
vanishes for almost Finsler manifolds that are bipartite spaces.
\label{bipthm}
\end{theorem}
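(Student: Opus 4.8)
The plan is to work directly in the local fiber coordinates of Sec.~\ref{Almost and partial Finsler manifolds}, writing the bipartite norm as $F = \rho + \Delta$ with $\Delta = F-\rho = \pm\sigma$, where $\rho=\sqrt{a_{jk}y^jy^k}$ is the Riemann norm and $\sigma=\sqrt{s_{jk}y^jy^k}$ is the seminorm built from $\bfs$. The sign choice is harmless: since $\Delta^2=\sigma^2$, the relations $\Delta_j = s_{jk}y^k/\Delta$ and $\Delta\,\Delta_{jk}=s_{jk}-\Delta_j\Delta_k$ hold for either sign, so the computation is sign-independent. The central observation is that $\rho$ and $\Delta$ are each square roots of $y$-quadratic forms and hence each have vanishing Cartan torsion on their own. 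Concretely, $a_{jk}=\rho_j\rho_k+\rho\,\rho_{jk}$ and $s_{jk}=\Delta_j\Delta_k+\Delta\,\Delta_{jk}$ are constant in $y$, so differentiating once yields the third-derivative reduction identities $\rho\,\rho_{jkl}=-\sum_{(jkl)}\rho_{jk}\rho_l$ and $\Delta\,\Delta_{jkl}=-\sum_{(jkl)}\Delta_{jk}\Delta_l$, which eliminate all third derivatives.

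First I would compute the Cartan torsion of $F$. Writing $C_{jkl}=\tfrac12(F_{jl}F_k+F_jF_{kl}+F_lF_{jk}+FF_{jkl})$, substituting $F=\rho+\Delta$, and using the reduction identities to rewrite the $FF_{jkl}$ piece, the factors $1-F/\rho=-\Delta/\rho$ and $1-F/\Delta=-\rho/\Delta$ appear and everything collapses onto first and second derivatives. The expected clean outcome is
\[
C_{jkl} = \sum_{(jkl)}\bigl(u_j\,\rho_{kl}+v_j\,\Delta_{kl}\bigr),
\qquad
u_j=\tfrac12\Bigl(\Delta_j-\tfrac{\Delta}{\rho}\rho_j\Bigr),
\qquad
v_j=\tfrac12\Bigl(\rho_j-\tfrac{\rho}{\Delta}\Delta_j\Bigr).
\]
The key algebraic point is that $\rho\,u_j+\Delta\,v_j=0$ identically, so the two symmetrized products are not independent: $u_j=-\tfrac{\Delta}{\rho}v_j$. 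This forces the torsion to collapse to a single symmetrized product $C_{jkl}=-\tfrac{\Delta}{\rho F}\sum_{(jkl)}v_j\,\tilde h_{kl}$, where $\tilde h_{kl}=\tfrac{F}{\Delta}(\Delta\rho_{kl}-\rho\Delta_{kl})$ is exactly the modified angular metric $h_{kl}-\tfrac{F^2}{\Delta}\Delta_{kl}$ of the theorem, since $F-\Delta=\rho$.

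It then remains to identify the modified mean torsion. The plan is to show that $\tilde I_j=I_j+\tfrac{F^2}{\rho\Delta}g^{mn}\Delta_m\Delta_{nj}$ is proportional to the one-form $v_j$, equivalently to $\Delta\rho_j-\rho\Delta_j$; the proportionality constant is precisely what defines $\kappa$, and matching then gives $C_{jkl}=\tfrac1\kappa\sum_{(jkl)}\tilde I_j\tilde h_{kl}$, i.e.\ $S_{jkl}=0$. For $I_j$ I would avoid inverting $g$ by using the identity $I_j=\tfrac{\partial}{\partial y^j}\ln\sqrt{\det g}$ and evaluating $\det g$ from the explicit fundamental tensor $g_{jk}=\tfrac{F}{\rho}a_{jk}+\tfrac{F}{\Delta}s_{jk}-\tfrac{\Delta}{\rho}\rho_j\rho_k-\tfrac{\rho}{\Delta}\Delta_j\Delta_k+\rho_j\Delta_k+\Delta_j\rho_k$ via the matrix-determinant lemma; this simultaneously produces $\kappa$ as an explicit function of $F$, $\rho$, and the invariants of $\bfs$ relative to $a$. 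As a consistency check, in the Randers limit $\Delta$ is linear, so $\Delta_{kl}=0$, $\tilde h_{kl}=h_{kl}$, $\tilde I_j=I_j$, and one finds $\kappa=n+1$, recovering the Matsumoto tensor.

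The main obstacle will be this last step: the single contraction $g^{mn}\Delta_m\Delta_{nj}$ genuinely requires the inverse of the structured metric $g_{jk}$, whose background part $\tfrac{F}{\rho}a+\tfrac{F}{\Delta}s$ mixes the two quadratic forms. The expected remedy is a Sherman--Morrison computation in the local coordinates of Sec.~\ref{Almost and partial Finsler manifolds}, exploiting that $g_{jk}$ differs from its background by a rank-two update built only from $\rho_j$ and $\Delta_j$, which also makes $\det g$ tractable. Verifying that the resulting $\tilde I_j$ collapses onto $v_j$---rather than carrying independent $\rho_j$ and $\Delta_j$ components---is the one place where a genuine cancellation beyond bookkeeping must occur, and I expect it to rest again on the relation $\rho\,u_j+\Delta\,v_j=0$ together with the Euler relations $y^j\rho_j=\rho$, $y^j\Delta_j=\Delta$.
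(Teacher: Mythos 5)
Your first half is correct and, in substance, reproduces the paper's own opening steps: the paper differentiates the polynomial identity $F^2-2\Delta F=\rho^2-\Delta^2$ three times (its Lemma preceding Definition \ref{def1}) and rewrites the result as
\[
2(F-\Delta)\,C_{jkl}+\sum_{(jkl)}\Bigl(\tfrac{\Delta}{F}\,p_j-\Delta_j\Bigr)\Bigl(h_{kl}-\tfrac{F^2}{\Delta}\,\Delta_{kl}\Bigr)=0 ,
\]
which is exactly your identity $C_{jkl}=-\tfrac{\Delta}{\rho F}\sum_{(jkl)}v_j\,\tilde h_{kl}$, since $\tfrac{\Delta}{F}p_j-\Delta_j=\tfrac{2\Delta}{F}v_j$ and your $\tilde h_{kl}$ is the shifted angular metric. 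Your reduction identities, the collapse via $\rho\,u_j+\Delta\,v_j=0$, and the Randers consistency check are all sound.

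The gap is in the second half, which is precisely the content of the paper's Lemma \ref{mctlem}: proving that $\tilde I_j=I_j+\tfrac{F^2}{\rho\Delta}g^{mn}\Delta_m\Delta_{nj}$ is parallel to $v_j$ with the constant that produces $\kappa=n+1-\tfrac{F^2}{\Delta}g^{kl}\Delta_{kl}$. Your proposed route (compute $\det g$ and $g^{-1}$ by Sherman--Morrison) underestimates the difficulty and leaves the decisive cancellation unproven. For a general bipartite form $\bfs$ the background $\tfrac{F}{\rho}\bfr+\tfrac{F}{\Delta}\bfs$ is a full matrix pencil; its inverse and determinant require the spectral data of $\bfs$ relative to $\bfr$, and the low-rank update (which, incidentally, is rank one, not two: your correction equals $-\tfrac{4\Delta}{\rho}v_jv_k$) does not remove that dependence. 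All your target quantities then become eigenvalue sums, and the claim that they recombine into a multiple of $v_j$ is exactly what you defer to "a genuine cancellation beyond bookkeeping." The paper needs no inversion at all: contract your own identity for $C_{jkl}$ with $g^{kl}$ and use that $g_{jk}y^k=\tfrac12(F^2)_{jk}y^k=Fp_j$ by Euler's theorem, i.e.\ $g^{jk}p_k=y^j/F$. Every $g^{-1}$-contraction involving $p_j$ thereby collapses to a $y$-contraction killed by homogeneity: $g^{kl}p_kh_{lj}=0$, $g^{kl}p_k\Delta_{lj}=0$, $g^{kl}p_k\Delta_l=\Delta/F$, and $g^{kl}\Delta_kh_{lj}=\Delta_j-\Delta p_j/F$. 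These identities evaluate every term of the contraction except $g^{kl}\Delta_{kl}$ and $g^{kl}\Delta_k\Delta_{lj}$, which are simply carried along into the definitions of $\kappa$ and $S_{jkl}$, yielding $2\rho I_j+\kappa\bigl(\tfrac{\Delta}{F}p_j-\Delta_j\bigr)+\tfrac{2F^2}{\Delta}g^{kl}\Delta_k\Delta_{lj}=0$ and hence the theorem by back-substitution. Without this observation (or an actual completion of your explicit inversion), your argument establishes the correct target but not the result.
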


A particularly interesting subset of bipartite spaces 
consists of the $\bfb$ spaces,
which offer a natural complement to the $\bfa$ spaces and Randers spaces.
The Finsler norm for a $\bfb$ space 
is constructed from a Riemannian norm $\rho$ 
and a seminorm $\sigma$ 
that involves a perpendicular projection with $\bfb$,
in contrast to the parallel projection relevant 
for the $\bfa$ spaces and Randers spaces.
The simplicity of this structure permits 
explicit analytical derivations of geometrical objects 
such as the Cartan torsion, 
the Matsumoto tensor,
and the spray coefficients for geodesics,
as well as establishing results 
such as the demonstration that any $\bfb$ space
with Riemannian-parallel $\bfb$ is a Berwald space~\cite{ak11}.
The $\bfb$ spaces appear in certain physical scenarios,
such as a bead sliding on a wire
or a transversely magnetized chain~\cite{fl15}.
Their continuations are also central in effective field theories 
with violations of Lorentz invariance,
where they arise from dominant spin-dependent effects
breaking the discrete symmetry CPT~\cite{ck97}
and as such have been the subject of numerous experiments~\cite{tables}.

For $\bfb$ spaces,
the characteristic tensor $\bfB$
expressed in terms of geometric quantities
is comparatively elegant.
In the final section of this work,
using the local coordinates defined in Sec.~\ref{Tensors},
we prove the following theorem for $\bfb$ spaces.

\begin{theorem}
The characteristic tensor 
$$
B_{jkl} = C_{jkl} 
- \frac{1}{\kappa_b}
\sum_{(jkl)}
I_j \left(h_{kl} - \frac{F^2}{\Delta} \Delta_{kl}\right)
$$
vanishes for almost Finsler manifolds that are $\bfb$ spaces.
\label{bthm}
\end{theorem}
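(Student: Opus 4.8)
The plan is to compute every object entering $B_{jkl}$ directly from the $\bfb$-space norm and then verify the claimed identity. Write $F=\rho+\Delta$, where $\rho=\sqrt{a_{jk}y^jy^k}$ is the Riemannian norm and $\Delta=\sigma$ is the perpendicular seminorm, so that $\Delta^2=c_{jk}y^jy^k$ is itself a quadratic form; throughout, a subscript denotes the fiber derivative $\partial/\partial y^j$. Starting from $g_{jk}=\tfrac12\partial_j\partial_k F^2$, $C_{jkl}=\tfrac12\partial_l g_{jk}$, $h_{jk}=FF_{jk}=g_{jk}-F_jF_k$, and $I_j=g^{kl}C_{jkl}$, the whole statement reduces to showing that the Cartan torsion of a $\bfb$ space is a single cyclic sum of $I_j$ against the modified angular metric $m_{kl}:=h_{kl}-\tfrac{F^2}{\Delta}\Delta_{kl}$, with the scalar $\kappa_b$ fixed by one trace.

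First I would record the derivative identities forced by the square-root-of-a-quadratic structure of both $\rho$ and $\Delta$. Euler's theorem gives $\rho_jy^j=\rho$, $\Delta_jy^j=\Delta$ and $\rho_{jk}y^k=\Delta_{jk}y^k=0$, so that $h_{jk}$, $\Delta_{jk}$ and hence $m_{jk}$ all annihilate $y$. The crucial identity is the collapse of the third derivative, $\Delta\,\Delta_{jkl}=-\sum_{(jkl)}\Delta_j\Delta_{kl}$, together with its counterpart for $\rho$; both follow from differentiating $\Delta\Delta_{jk}=c_{jk}-\Delta_j\Delta_k$ once more. Substituting these into $C_{jkl}=\tfrac12\partial_l g_{jk}$ eliminates the third derivatives and leaves a cyclic sum of a second derivative times a first derivative, namely $2C_{jkl}=\sum_{(jkl)}\big[\rho_{jk}(\Delta_l-\tfrac{\Delta}{\rho}\rho_l)+\Delta_{jk}(\rho_l-\tfrac{\rho}{\Delta}\Delta_l)\big]$.

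The collapse to rank one is the heart of the argument. A short computation shows the two covectors above are proportional, $\rho(\Delta_l-\tfrac{\Delta}{\rho}\rho_l)=-\Delta(\rho_l-\tfrac{\rho}{\Delta}\Delta_l)$, and that the modified angular metric factors cleanly as $m_{kl}=F(\rho_{kl}-\tfrac{\rho}{\Delta}\Delta_{kl})$. Together these recast the torsion as $C_{jkl}=-\tfrac{\Delta}{2\rho F}\sum_{(jkl)}w_l\,m_{jk}$ with the single covector $w_l:=\rho_l-\tfrac{\rho}{\Delta}\Delta_l$. At this stage I would invoke the defining feature of a $\bfb$ space---that $\sigma$ is the perpendicular projection, i.e. that $a_{jk}-c_{jk}=\wh b_j\wh b_k$ is rank one---to reduce $w_l$ to a multiple of the preferred direction $\wh b_l-\tfrac{\wh\beta}{\Delta}\Delta_l$, and thereby to $I_l$.

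Finally, contracting $C_{jkl}$ with $g^{kl}$ recovers $I_j=-\tfrac{\Delta}{2\rho F}\big[w_j\,g^{kl}m_{kl}+2w^km_{kj}\big]$; the rank-one property makes $w^km_{kj}$ a multiple of $w_j$, so $I_j\propto w_j$ and the proportionality constant determines $\kappa_b=g^{kl}m_{kl}+2\,m_{kl}I^kI^l/(I^mI_m)$, a scalar that the explicit formulas express through $F$ and $\rho$. Substituting $w_l\propto I_l$ back yields $C_{jkl}=\tfrac1{\kappa_b}\sum_{(jkl)}I_jm_{kl}$, i.e. $B_{jkl}=0$. I expect the main obstacle to be precisely this rank-one collapse: a priori the torsion in the previous paragraph involves two independent symmetric tensors $\rho_{jk}$ and $\Delta_{jk}$ and two covectors, and only the perpendicular-projection relation between $a$ and $c$---together with $w$ (equivalently $I$) being an eigendirection of $m$---forces the degeneration to the single cyclic $I_jm_{kl}$ form.
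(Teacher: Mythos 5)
Your opening reduction is correct, and it is in fact the paper's own key identity: using only that $\rho^2$ and $\Delta^2$ are quadratic in $\bfy$ (so the third derivatives collapse, $\Delta\Delta_{jkl}=-\sum_{(jkl)}\Delta_j\Delta_{kl}$ and similarly for $\rho$), one gets $C_{jkl}=-\frac{\Delta}{2\rho F}\sum_{(jkl)}w_j m_{kl}$ with $w_j=\rho_j-\frac{\rho}{\Delta}\Delta_j=p_j-\frac{F}{\Delta}\Delta_j$ and $m_{kl}=h_{kl}-\frac{F^2}{\Delta}\Delta_{kl}=F\left(\rho_{kl}-\frac{\rho}{\Delta}\Delta_{kl}\right)$. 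This is precisely Lemma \ref{geomlem} of the paper (and it holds for every bipartite space, not only $\bfb$ spaces). Your contraction formula $I_j=-\frac{\Delta}{2\rho F}\left(w_j\,g^{kl}m_{kl}+2w^km_{kj}\right)$ is also correct. The paper routes through the general bipartite Theorem \ref{bipthm} and then specializes, but up to this point your route and the paper's are equivalent.

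The gap is the sentence ``the rank-one property makes $w^km_{kj}$ a multiple of $w_j$.'' That eigendirection claim is the entire content of the theorem, and you never prove it; you even flag it yourself as ``the main obstacle.'' Note that $w^k=g^{kl}w_l$ is raised with the inverse \emph{fundamental} tensor, not with the Riemannian metric, so this is a statement about the Finslerian $g^{kl}$, which for a $\bfb$ space is a nontrivial three-term expression (the paper's Lemma \ref{invmetric}). Using the homogeneity identities $g^{kl}p_kh_{lj}=0$, $g^{kl}p_k\Delta_{lj}=0$, and $g^{kl}\Delta_kh_{lj}=\Delta_j-\frac{\Delta}{F}p_j$, one finds $g^{kl}w_km_{lj}=w_j+\frac{F^3}{\Delta^2}\,g^{kl}\Delta_k\Delta_{lj}$, so your claim is exactly equivalent to $g^{kl}\Delta_k\Delta_{lj}=0$; proportionality to $w_j$ with a nonzero coefficient would not suffice, since it would shift $\kappa_b$ away from the value in Lemma \ref{quantity1} that the theorem presupposes. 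Establishing this vanishing is what the paper's Lemmas \ref{delproj}--\ref{quantity2} do, via the projector $\widetilde\Delta^j{}_k=\frac{\Delta}{\|\bfb\|^2}\Delta^j{}_k$ (which annihilates $\bfb$, $\Delta^k$, $\rho^k$), the eigenrelations $P^j{}_k\Delta^k=\Delta^j$ and $P^j{}_k\Delta^k{}_l=\Delta^j{}_l$, and the explicit inverse metric; no such computation appears in your proposal, and rank-one algebra alone cannot substitute for it. Moreover, the rank-one statement you invoke is misstated: for a $\bfb$ space $a_{jk}-c_{jk}=(1-\|\bfb\|^2)a_{jk}+b_jb_k$ is positive definite, not rank one (the rank-one tensor is $\|\bfb\|^2a_{jk}-c_{jk}=b_jb_k$). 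Finally, your $\kappa_b=g^{kl}m_{kl}+2\,m_{kl}I^kI^l/(I^mI_m)$ is left implicit; identifying it with the explicit scalar of Lemma \ref{quantity1}, which is what the theorem asserts, again requires computing $g^{kl}\Delta_{kl}$ from Lemma \ref{invmetric}, and that computation is missing.
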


Theorems \ref{bipthm} and \ref{bthm}
offer support for the conjecture 
that the geometry of almost Finsler manifolds 
can be classified by their geometric properties
as encoded in suitable symmetric 3-tensors.
For example, 
the vanishing of the Cartan torsion $\bfC$ guarantees 
that a Finsler manifold is Riemannian~\cite{ad}
and the vanishing of the Matsumoto tensor $\bfM$
guarantees that a Finsler manifold is Randers~\cite{mh}.
Note that the tensors $\bfC$, $\bfM$, $\bfS$, and $\bfB$ 
are vertical invariants that depend only on the tangent-space geometry.
In contrast,
3-tensors involving covariant derivatives 
such as the Landsberg tensor
measure the behavior of the tangent-space geometry 
while moving from point to point.  
Further evidence for the classification conjecture could be adduced
from other calculable examples
associated with the motion of a fermion wavepacket in a background,
such as the special bipartite spaces mentioned
above~\cite{krt12,kr10,rs18,ek18,sma19}.
A general proof of the classification conjecture
would evidently be of definite interest
and is likely to require a deeper understanding
of the relationships between geometric elements
and derivatives of level curves of almost Finsler manifolds. 

Note that Theorems \ref{bipthm} and \ref{bthm}
are one-way results:
the relevant characteristic tensors are proved to vanish
for the designated almost Finsler manifolds.
Based on the interpretation 
of the vanishing of the characteristic tensors $\bfS$ and $\bfB$
as differential equations that determine almost Finsler norms, 
it is plausible to conjecture that the converses 
of Theorems \ref{bipthm} and \ref{bthm} are also valid,
perhaps modulo a small subset of degenerate or singular cases 
arising in special limits.
If true,
this conjecture would extend Theorems \ref{bipthm} and \ref{bthm}
to biconditional statements.
A proof would be of definite interest 
but lies outside our present scope.

The spaces considered here are continuations 
of Lorentz-Finsler spaces,
which have Minkowski pseudonorms with Lorentz signature,
to almost Finsler manifolds and partial Finsler manifolds
having Minkowski norms with the standard Euclidean signature.
Identifying a satisfactory definition of Lorentz-Finsler manifolds 
remains a subject of active 
research~\cite{jb70,gsa85,ma94,bf00,ak11,pw11,lph12,cm15,cs16,em17,js20,hpv22}
with prospective applications in gravity, astrophysics,
and condensed matter~\cite{lp18,bv18,kl21,mt21,%
rl21,jsv22,klmss22,zm23,bs23,hyzl24,ncnpm24,st25,mcw12,ms25,pvfp25}.
Our constructions of the characteristic tensors $\bfS$ and $\bfB$
rely on derivatives of the almost Finsler norms and partial Finsler norms
and hence can generically be extended from norms to pseudonorms.
It is therefore reasonable to conjecture
that our treatment could be adapted to form the basis 
for a definition of almost Lorentz-Finsler manifolds
that would permit the extension of Theorems \ref{bipthm} and \ref{bthm}
to the corresponding bipartite and $\bfb$ Lorentz-Finsler spaces.
One indication in support of this conjecture
is that the definition of a pseudo-Finsler manifold
as considered in Ref.~\cite{hpv22}
requires that the fundamental tensor $\bfg$ is nondegenerate
and hence is intermediate between a partial Finsler manifold
and an almost Finsler manifold.
In any event,
establishing the validity of the conjecture has the potential
for impact in both the mathematical and the physical contexts. 

The remainder of this work is structured as follows.
In Sec.~\ref{Almost and partial Finsler manifolds},
we introduce the notions of almost Finsler manifold
and partial Finsler manifold,
and we discuss some aspects of the associated tensor calculus.
Section~\ref{Bipartite spaces}
considers bipartite spaces 
and demonstrates relationships among the bipartite indicatrix unions
of the Randers spaces, the $\bfa$ spaces, and the $\bfb$ spaces. 
The proofs of Theorems \ref{bipthm} and \ref{bthm}
are provided in Sec.~\ref{Characteristic tensors}.

\section{Almost and partial Finsler manifolds}
\label{Almost and partial Finsler manifolds}

In this section,
we introduce the definitions 
of an almost and a partial Minkowski norm on a vector space
and an almost and partial Finsler norm on a smooth manifold, 
giving examples along the way.   

\subsection{Minkowski norms}
By vector space we always mean a finite-dimensional real vector space.
A subset $S$ of $V$ is {\em conelike} 
if $S = \R S$, i.e., $S$ is closed under scalar multiplication.

\begin{definition}
\label{pms}
A {\em partial Minkowski space} is a triple $(V,S,F)$,
where $V$ is a vector space, 
$S \subset V$ is a closed, conelike, nonempty subset, 
and $F : V \backslash S \to (0,\infty)$ is a smooth function satisfying 
{\em homogeneity}: 
$F(\lambda \bfy) = \lambda F(\bfy)$ 
for $\lambda > 0$ and $\bfy \in V \backslash S$.  The function
$F$ is called a partial Minkowski norm. 
\end{definition}

\begin{definition}
\label{ams}
An {\em almost Minkowski space} is a partial Minkowski space $(V,S,F)$ 
that also satisfies 
{\em positive definiteness}:
for all $\bfy \in V \backslash S$, 
the symmetric bilinear form 
\begin{align*}
\bfg(\bfy)  : V \times V & \to \R \ , \\
(\bfu, \bfv) & \mapsto \tfrac{1}{2} 
\lim_{s,t \to 0} \frac{F^2(\bfy + s\bfu + t\bfv)}{st}
\end{align*}
is positive definite.
$F$ is then called an almost Minkowski norm. 

A {\em Minkowski space}  is an almost Minkowski space $(V,S,F)$ 
with $S = \{0\}$.
\end{definition}

Note that $\bfg$ is radially constant:
$\bfg(\lambda \bfy) = \bfg(\bfy)$ 
for $\lambda > 0$ and $\bfy \in V- S$.

\begin{definition}
Let $(V,S,F)$ be a partial Minkowski space.
Its {\em indicatrix} is the level hypersurface
$$
F^{-1}\{1\} = \{\bfv \in V \backslash S \mid F(\bfv) = 1\}.
$$
The {\em solid indicatrix} is
$F^{-1}[0,1]$.
\end{definition}
The homogeneity condition implies that the indicatrix determines $F$.
The solid indicatrix is a smooth manifold whose boundary is the indicatrix.
If $F$ is a Minkowski norm, 
the solid indicatrix is strictly convex, 
is diffeomorphic to a closed disk, 
and contains the origin.

Let $(V,\langle , \rangle)$ be a finite-dimensional real inner-product space.
The {\em Euclidean norm} 
$\|\bfy\| = \sqrt{\langle \bfy,\bfy \rangle}$ 
is a Minkowski norm.
The Euclidean norm determines the inner product
by the polarization identity 
$\langle \bfv, \bfw \rangle = 
(\| \bfv + \bfw \|^2 - \|\bfv\|^2 - \| \bfw\|^2)/2$.

\begin{example}
Let $\bfa \in V$ be a vector whose length is less than one.
Then $F{_\bfa}(\bfy) = \|\bfy \| + \langle \bfa, \bfy \rangle$ 
is a Minkowski norm, 
called a {\em Randers norm}.  
Alternatively, 
if $\alpha : V \to \R$ is a functional with norm less than one, 
then $F_\alpha(\bfy) = \|\bfy \| + \alpha(\bfy)$ is a Minkowski norm.
The {\em Randers spaces} associated with $\bfa$ are the Minkowski spaces 
$(V, F_{\bfa})$ and $(V, F_{-\bfa})$.
\end{example}

\begin{example}
Recall that for a given nonzero vector $\bfv \in V$ 
and for any $\bfy \in V$ there is a unique expression
\begin{equation}
\bfy = \bfy_{\|} + \bfy_{\perp} \ ,
\label{vecdecomp}
\end{equation}
with $\bfy_{\|}$ parallel to $\bfv$ 
and $\bfy_{\perp}$ perpendicular to $\bfv$.

Let $\bfa \in V$ be a given nonzero vector whose length is less than one.
The {\em $\bfa$ spaces} are the partial Minkowski spaces 
$(V, (\R \bfa)^\perp, F_{\bfa}^+)$ and $(V, (\R \bfa)^{\perp}, F_{\bfa}^-)$,
where $F_{\bfa}^{\pm}(\bfy) = \|\bfy\| \pm \|\bfa\|  \|\bfy_{\|}\|$.
These $\bfa$ norms are in correspondence with the Randers norm
in the following sense:
if $\langle  \bfa, \bfy \rangle > 0$, 
then $F_{\bfa}^{\pm}(\bfy) = F_{\pm \bfa}(\bfy)$,
and if $\langle \bfa, \bfy \rangle < 0$, 
then $F_{\bfa}^{\pm}(\bfy) = F_{\mp \bfa}(\bfy)$.
The $\bfa$ spaces are therefore almost Minkowski spaces.
Note that,
although the functions $F^\pm_{\bfa} : V \to \R$ are continuous, 
they are not smooth on the hyperplane $(\R\bfa)^{\perp}$.  
One can see this by setting 
$\bfa = c ~\bfe_n$ where $-1 < c < 1$, $c \not = 0$, 
and then computing the derivative. 
\label{vecdecompa}
\end{example}

\begin{example}
Let $\bfb \in V$ be a given nonzero vector whose length is less than one.
The {\em $\bfb$ spaces} are the partial  Minkowski spaces 
$(V, \R \bfb, F_{\bfb}^+)$ and $(V, \R \bfb, F_{\bfb}^-)$,
where $F_{\bfb}^{\pm}(\bfy) = \|\bfy\| \pm \|\bfb\|  \|\bfy_{\perp}\|$.
The functions $F^\pm_{\bfb} : V \to \R$ are continuous
but are not smooth on the line $\R\bfb$. 
It is known that $F_{\bfb}^+$ is always 
an almost Minkowski norm~\cite{ak11}. 
Corollary \ref{fminus} below
reveals that $F_{\bfb}^-$ is an almost Minkowski norm only for $\dim V = 2$.
\label{vecdecompb}
\end{example}

A Minkowski space is {\em reversible}
if the Minkowski norm $F(\bfy)$ satisfies
$F(-\bfy)=F(\bfy)$.
The Randers spaces are nonreversible,
while the $\bfa$ and $\bfb$ spaces are reversible.

For a Minkowski norm $(V,F)$, 
the map $F^2$ extends to a smooth map $V \to \R$ 
if and only if the Minkowski norm is Euclidean. 
(If $F^2$ extends to a smooth map, 
the second derivative $\bfg(\bfy)$ must be constant 
since it is radially constant and extends continuously to the origin).
For $\dim V > 1$, 
the almost Minkowski $\bfa$ and $\bfb$ spaces 
do not extend to Minkowski spaces.
This fact and the presence of the $\bfa$, $\bfb$, 
and other spaces with similar features in the physics literature
are the motivation for our introduction 
of the notions of an almost Minkowski space and a partial Minkowski space. 

The Randers spaces, $\bfa$ spaces, and $\bfb$ spaces 
have complementary aspects
originating from the roles of $\bfy$, $\bfy_\|$, and $\bfy_\perp$
in the corresponding structures $F$~\cite{ak11}.
Some aspects of this complementarity and the special case $\dim V = 2$ 
are discussed in Sec.~\ref{Bipartite indicatrix unions}.

\begin{definition}
Let $\bfa \in \R^n$ and $\bfr \in \R^n_{> 0}$.
Then 
$$
E = \{\bfy \in \R^n \mid \frac{(y^1-a^1)^2}{(r^1)^2} + \cdots 
+ \frac{(y^n-a^n)^2}{(r^n)^2} = 1\}
$$
is an {\em ellipsoid in $\R^n$ with center $\bfa$}.  

Let $E$ be a subset of a vector space $V$.
$E$ is an {\em ellipsoid in $V$ with center $\bfa \in V$} 
if there exists an isomorphism of vector spaces $f : V \to \R^n$ 
so that $f(E)$ is an ellipsoid in $\R^n$ with center $f(\bfa)$.  
\end{definition}

If $E\subset V$ is an ellipsoid with center $\bfa$, 
then for any isomorphism of vector spaces 
$f : V \to \R^n$, $f(E)$ is an ellipsoid in $\R^n$ with center $f(\bfa)$.  

Note that we now have two definitions of the concept 
of ellipsoid with center $\bfa$ for $V = \R^n$: 
one using coordinates and one using the vector space structure.
The two definitions coincide.   

The indicatrices $I^{\pm}_{\bfa}$ for the Randers spaces $(V, F_{\pm\bfa})$ 
are ellipsoids with center  $\mp\bfa/(1- \|\bfa\|^2)$.
Note that the center is not the origin if $\bfa$ is nonzero.

\subsection{Finsler manifolds}

A subset $S$ of the total space $E$ of a vector bundle $V$ 
is {\em conelike} if $S = \R S$.

\begin{definition}
A {\em partial (respectively, almost) Finsler manifold} $(M,S,F)$ 
is a triple where $M$ is a smooth manifold, 
$S \subset TM$ is a closed, conelike subset 
which contains the zero section of $M$, 
and $F : TM \backslash S \to (0,\infty)$ is a smooth function
such that for all $x \in M$, $(T_xM, S \cap T_xM, 
F|_{x})$ is a partial (respectively, almost) Minkowski norm.   
Here, 
$F|_{x}$ indicates $F$ restricted to $T_xM \backslash S\cap T_xM$.
A {\em Finsler manifold} is a pair $(M, F : TM \backslash M \to (0,\infty))$
satisfying homogeneity and positive definiteness in each tangent space.
Here, the subset of zero vectors in $TM$ is identified with $M$.
\label{afmslit}
\end{definition}

The purpose of homogeneity is to establish a well-defined length of a curve, 
independent of parametrization.
The purpose of positive definiteness is to allow 
the application of concepts from Riemannian geometry
as in Ref.~\cite{bcs}.

The manifold $TM \backslash S$ is called 
the {\em slit tangent bundle} 
and $S$  is called the {\em slit}.  

If $(M,F)$ is a Finsler manifold so that
 $F|_{x}$ is a Euclidean norm for all $x \in M$, 
the pair $(M,F)$ is a Riemannian manifold. 
In this case we use the letter $\rho$ instead of $F$
and denote the corresponding inner product 
by $\langle \bfy, \bfy' \rangle_\rho$. 

If $(M,S,F)$ is a partial Finsler manifold,
define the {\em extended slit} 
$$ES = S \cup \{\bfy \in TM \backslash S \mid \bfg(\bfy) 
\text{ is not positive definite}\}.$$
Then $(M,ES,F)$ is an almost Finsler manifold, 
called the {\em truncation} of the partial Finsler manifold $(M,S,F)$.

\begin{example}
Let $(M,\rho)$ be a Riemannian manifold.
Let $\pi: TM \to M$ be the projection map.

Let $\bfa : M \to TM$ be a vector field on $M$ 
such that $\rho(\bfa(x)) < 1$ for all $x \in M$.
Then $(M,F_{\pm\bfa})$ 
with
\begin{equation}
F_{\pm \bfa}(\bfy) = 
\rho(\bfy) \pm \langle \bfa(\pi(\bfy)),\bfy \rangle_\rho
\label{rmfd}
\end{equation}
are Finsler manifolds, 
called {\em Randers manifolds}.  

The partial Finsler manifolds 
$(M, \cup_{x \in M} (\R \bfa(x))^{\perp}, F_{\bfa}^{\pm})$
with 
\begin{equation}
F_{\bfa}^{\pm}(\bfy) = 
\rho(\bfy) \pm \rho(\bfa(\pi(\bfy)) \rho(\bfy_{\|}) 
\label{amfd}
\end{equation}
are called {\em $\bfa$ manifolds}.
Note that $F_{-\bfa}^{\pm} = F_{\bfa}^{\pm}$ 
and $F_{\bfa}^{\pm}(\bfy) = F_{\bfa}^{\pm}(-\bfy)$.
If $\langle \bfa(\pi(\bfy)), \bfy \rangle_\rho > 0 $, 
then $F_{\bfa}(\bfy) = F^+_{\bfa}(\bfy)$ 
while if $\langle \bfa(\pi(\bfy), \bfy \rangle_\rho <0$,
then $F_{\bfa}(\bfy) = F^-_{\bfa}(\bfy)$.

Let $\bfb: M \to TM$ be a vector field on $M$ 
so that $\rho(\bfb(x)) < 1$ for all $x \in M$.
The partial Finsler manifolds 
$(M, \cup_{x \in M} \R \bfb(x), F_{\bfb}^{\pm})$
with 
\begin{equation}
F_{\bfb}^{\pm}(\bfy) = 
\rho(\bfy) \pm \rho(\bfb(\pi(\bfy)) \rho(\bfy_{\perp}) 
\label{bmfd}
\end{equation}
are called {\em $\bfb$ manifolds}.
Note that $F_{-\bfb}^{\pm} = F_{\bfb}^{\pm}$ 
and $F_{\bfb}^{\pm}(\bfy) = F_{\bfb}^{\pm}(-\bfy)$.
\label{abmfd}
\end{example}

The results of \cite{krt12} 
combined with Lemma~\ref{aminusmemma} and Corollary~\ref{fminus} below
reveal that the $\bfa$ manifolds and $\bfb$ manifolds
are almost Finsler manifolds,
except for $(M, \cup_{x \in M} \R \bfb(x), F_{\bfb}^{-})$ 
with $\dim M >2$.  An interesting open problem is 
to determine the extended slit for these manifolds.

\section{Tensors}
\label{Tensors}

We review the notion of tensor fields on a smooth manifold, 
in preparation for the computations in the next section.
We give a novel interpretation of the higher derivatives
of a real-valued function on the tangent bundle of a smooth manifold.

\subsection{Tensors and tensor fields}

A {\em $(p,q)$-tensor $\bfT$ on a vector space $V$} 
is an element of $T^{p,q}V = V^{\otimes p} \otimes V^{*\otimes q}$. 
This is the same as a multilinear function 
$
 (V^*)^p \times V^q = 
V^* \times \cdots \times V^* \times V \times \cdots \times V \to \R.
$
A {\em metric} $\bfg$ on $V$ is given 
by a $(0,2)$-tensor field that is symmetric 
(invariant under the interchange map $V^* \otimes V^* \to V^* \otimes V^*$) 
and positive definite (for $\bfv \not = 0$, $\bfg(\bfv, \bfv) > 0$).   
A {\em $(p,q)$-tensor field on a manifold $M$} 
is a smooth section of $T^{p,q}M = TM^{\otimes p} \otimes TM^{*\otimes q}$.
If $U$ is an open subset of $V$
and $f: U \to \R$ is a smooth function, 
then the derivatives of $f$,
denoted by $\bff', \bff^{''},\bf f^{'''}, \dots$, 
are symmetric tensor fields on $U$ of type $(0,1), (0,2), (0,3), \dots$.  

\begin{example} \label{vs_metric_Cartan}
If $F$ is a Minkowski norm on a vector space $V$, 
then the second derivative of $F^2/2$ 
is given by $\bfg(\bfy)$ in Definition \ref{ams}.
The third derivative of $F^2/2$ is called the {\em Cartan tensor} 
and is given by the symmetric trilinear function 
$\bfC(\bfy) : V \times V \times V \to \R$.  
The Cartan tensor vanishes if and only if $F$ is a Euclidean norm~\cite{ad}.

A Riemannian metric $\bfg$ gives a $(0,2)$-tensor field 
on the underlying manifold.
\label{defs}
\end{example}

\begin{definition}
Let $E \to M$ be a smooth bundle over a smooth manifold.
A {\em tensor field of type $(p,q)$ on $M$ with values in $E$} 
is a smooth section $\bfT$ of the bundle  
$$ 
E \otimes \cdots \otimes E \otimes E^* \otimes \cdots \otimes E^* \to M. 
$$
In effect, 
$\bfT(x)$ is a $(p,q)$-tensor on $E_x$ 
that depends smoothly on $x \in M$.
The vector space of all $(p,q)$-tensors with values in $E$ 
is denoted by $T^{p,q}(E \to M)$,
which we abbreviate as $T^{p,q}E$.
A {\em metric on $E$} is a tensor field of type $(0,2)$ 
that gives a metric at each fiber $E_x$.
\end{definition}

Let $\pi: TM \to M$ be the tangent bundle of a manifold $M$.
Let $W \subset TM$ be an open subset. 
Consider the pullback diagram
\begin{equation} 
 \begin{CD}
(\pi|_W)^*TM @>>> TM \\
@V\pi_1VV @VV\pi V \\
W @>\pi|_W >> M
\end{CD}
\label{pb_abstract}
\end{equation}
where
$$(\pi|_W)^*TM =\{ (\bfy, \bfy') 
\in W \times TM \mid \pi(\bfy) = \pi(\bfy')\}$$ 
is the pullback of $\pi$ under $\pi|_W$.
Then $\pi_1$ 
(the projection on the first factor) 
is a rank-$\dim M$ vector bundle with fibers $ T_{\pi(\bfy)}M$.
Furthermore,
$\pi_1$ has a canonical section $l(\bfy) = (\bfy, \bfy)$.
The derivatives of a smooth function $f : W \to \R$, 
denoted by $\bff', \bff^{''}, \bff^{'''}, \dots$,
are symmetric tensor fields on $W$
with values in $(\pi|_W)^*TM$ of type $(0,1), (0,2), (0,3), \dots$
Here, 
for instance, 
if $\bfy \in W$ and $x = \pi(\bfy)$ then
\begin{align*}
\bff''(\bfy)  &: T_xM \times T_xM  \to \R \ , \\
\bff''(\bfy)(\bfu, \bfv) 
&=  \lim_{s,t \to 0} \frac{f(\bfy + s\bfu + t\bfv)}{st} \ .
\end{align*}

\begin{example} \label{metric_and_Cartan}
Let $(M,S,F)$ be a partial Finsler manifold.  
Consider the pullback diagram
\begin{equation}
\begin{CD}
\pi|^*TM @>>> TM \\
@V\pi_1VV @VV\pi V \\
TM \backslash S @>\pi|>> M
\end{CD}
\label{pb}
\end{equation}
The {\em fundamental tensor} $\bfg$
is the second derivative of $F^2/2$. 
It is a symmetric positive-definite tensor field 
of type $(0,2)$ on $TM \backslash S$ with values in $\pi|^*TM$.
Thus $\pi_1$ is a bundle with metric $\bfg$.
The {\em Cartan tensor} $\bfC$ is the third derivative of $F^2/2$ 
and is a totally symmetric trilinear $(0,3)$-tensor.
Note that if $\pi(\bfy) = x$ 
then $\bfg(\bfy)$ and $\bfC(\bfy)$ 
are the tensors on the vector space $T_xM$ 
in Example \ref{vs_metric_Cartan}.
Note also that 
$\bfC(\bfy) : T_x M \times T_xM \times T_xM \to \R$ 
is a totally symmetric trilinear function
when $\pi(\bfy) = x$. 

The Cartan tensor vanishes if and only if $(M,F)$ 
is a Riemannian manifold.
\end{example}

\subsubsection{Components and coordinates}

We first consider components of a tensor, 
then coordinates for manifolds and vector bundles, 
and finally use coordinates to express the components of a derivative tensor.

Let $\{\bfe_1, \cdots, \bfe_n\}$ be an ordered basis for a vector space $V$ 
and let  $\{\bfe^1, \cdots, \bfe^n\}$ be the dual basis of  $V^*$, 
so that $\bfe^j(\bfe_k) = 1, j = k$ and $\bfe^j(\bfe_k) = 0, j\neq k$. 
One can represent an element of $V$ 
by $\sum a^j\bfe_j$ or $a^j\bfe_j$ or $a^j$ 
and an element of $V^*$ 
by $\sum a_j\bfe^j$ or $a_j\bfe^j$ or $a_j$.
A $(p,q)$-tensor $\bfT$ can be written as 
\begin{equation} 
\bfT = \sum T^{j_i \cdots j_p}{}_{k_1 \cdots k_q} 
\bfe_{j_1} \otimes \cdots \otimes \bfe_{j_p} 
\otimes 
\bfe^{k_1} \otimes \cdots \otimes  \bfe^{k_q} \ ,
\label{tensor_coordinates}
\end{equation}
where $1 \leq j_1, \dots, j_p\leq n$ 
and $1 \leq k_1,  \dots,  k_q\leq n$.
The quantities 
$T^{j_i \cdots j_p}{}_{k_1 \cdots k_q}$
are called the {\em components of the tensor} $\bfT$.
 
Conversely, 
a collection of real numbers 
$T^{j_i \cdots j_p}{}_{k_1 \cdots k_q}$
defines a tensor $\bfT$ of type $(p,q)$.
 
\begin{example}
There is an isomorphism $T^{1,1}V \to \Hom(V,V)$ 
given by $\bfv \otimes \omega \mapsto (\bfv' \mapsto \omega(\bfv')\bfv)$.
Let $\iota$ be the $(1,1)$-tensor that maps to $\id_V$.
The components of $\iota$ with respect to {\em any} basis are
$\bfe^j(\bfe_k) = \delta^j{}_k$, 
where the collection of real numbers $\delta^j{}_k$ 
is called the {\em Kronecker delta}. 
Note that $\delta^j{}_k=1$ when $j=k$ and $\delta^j{}_k = 0$ when $j\neq k$. 
\end{example}

Let $E \to M$ be a smooth vector bundle over a smooth manifold.
Suppose there are sections $\{\bfe_1, \cdots, \bfe_n\}$ of the bundle 
so that $\{\bfe_1(x), \dots, \bfe_n(x)\}$ 
is a basis for the fiber $E_x$ for every $x \in M$.
(Every bundle is locally trivial, so this can be arranged
by passing to a neighborhood of a point.)  
A tensor field $\bfT$ of type $(p,q)$ with values in $E$ 
can be written as in Eq.~\eqref{tensor_coordinates},
where the tensor components 
$$
T^{j_i \cdots j_p}{}_{k_1 \cdots k_q} : M \to \R
$$
are smooth functions.

For an  open set $U$ in $\R^n$ and a smooth function $f : U \to \R$, 
let $f_j : U \to \R$, $j = 1, \dots, n$ denote the partial derivatives.

Let $M$ be a smooth $n$-manifold.
For every $x \in M$, 
there is an open neighborhood $U$ of $x$ 
and {\em coordinate functions} $x^j: U \to \R$ for $ j = 1, \dots, n$ 
so that $(x^1, \dots, x^n) : U \to \R^n$ is a smooth embedding.
One then has tensor fields $\partial_j$ and $dx^j$ on $U$ 
of type $(1,0)$ and $(0,1)$,
respectively, 
that push forward and pull back from the coordinate tensor fields on $\R^n$.    
If $f : U \to \R$ is a smooth function, 
define the partial derivatives $f_j : U \to \R$ 
by $(f\circ (x^1, \dots, x^n)^{-1})_j \circ (x^1, \dots, x^n)$.
These partial derivatives depend on the choice of coordinates; 
we also write $\partial f/\partial x^j$ or $\partial_jf$ for $f_j$.

Let $\pi : E \to B$ be a vector bundle of rank $n$.
For every $y \in B$, 
there is an open neighborhood $U$ of $y$ 
and linearly independent sections $\bfe_j : U \to E$ for $ j = 1, \dots, n$.
Define the {\em coordinate functions} 
$y^j : \pi^{-1}U \to \R$ by $\bfy = \sum y^j(\bfy) \bfe_j (\pi(\bfy))$.
The coordinate functions give a homeomorphism 
$(\pi, y^1, \dots, y^n): \pi^{-1}U \to U \times \R^n$.

Let $\pi: E \to M$ be a smooth vector bundle of rank $m$ 
over an $n$-manifold $M$.
Then for every $x \in M$, 
there exists an open neighborhood $U$ of $x$ 
with coordinate functions $\bar x_j$ 
and smooth linearly independent sections $\bfe_j$ defined on $U$.
The coordinate functions give a smooth embedding 
$(x^1, \dots, x^n, y^1, \dots y^m) : \pi^{-1}U \to \R^n \times \R^m$, 
where $x^j = \bar x^j \circ \pi$.  

Let $W \subset TM$ be open.   Recall the pullback diagram 
\begin{equation*} 
 \begin{CD}
\pi|^*TM @>>> TM \\
@V\pi_1VV @VV\pi V \\
W @>\pi| >> M
\end{CD}
\end{equation*}

If $f : W \to \R$ is a smooth function, 
we wish to have local formulas for the components of the derivatives 
$\bff', \bff'', \bff''', \dots$, 
which are  symmetric tensor fields on $W$ 
with values in $\pi|^*TM$ of type $(0,1), (0,2), (0,3), \dots$.  
For every $x \in \pi(W)$ there is a neighborhood $W$ and
linearly independent sections  $\bfe_j : W \to TM$ for $j =1 , \dots, n$.
Then $\pi_1$ has pullback sections $\pi|^*\bfe_j$, 
defined by $\bfy \mapsto (\bfy,\bfe_j(\pi(\bfy))$,
and
\begin{align*}
\bff' &= \sum  \frac{\partial f}{\partial y^j} \pi|^*\bfe^j \ , \\
\bff'' &= \sum  \frac{\partial^2 f}{\partial y^j\partial y^k}\pi|^*\bfe^j 
\otimes \pi|^*\bfe^k \ . \\
\end{align*}
Note that the partial derivatives of $f$ with respect to the $x^j$ 
are absent from the formulas for the derivatives, 
and we often abbreviate
$f_j =  \partial f/\partial y^j$ 
and $f_{jk} = \partial^2 f/\partial y^j\partial y^k$.

\subsubsection{Contraction; lowering and raising indices}

Let $j$ be an integer between 1 and $p$ 
and $k$ be a integer between 1 and $q$.
Then there is a linear map $c_{j,k} : T^{p,q}V \to T^{p-1,q-1}V$ 
given by evaluating the $(p+k)$-factor on the $j$-th factor.
This map is called  {\em $(j,k)$-contraction}.   

There is a tensor product isomorphism 
$$
- \otimes - : T^{p,q}V \otimes T^{p',q'}V \to T^{p+p',q+q}V .
$$

For the rest of this section, 
assume we have a metric $\bfg \in T^{0,2}V$ on $V$.
Then there is a duality isomorphism~$\vee: V \to V^*$ 
defined by $\bfv^\vee(\bfv') = g(\bfv,\bfv')$.
We also write $\vee$ for the inverse isomorphism.
Then we have isomorphisms
$$
\vee : T^{p,q}V \to T^{q,p}V , 
$$
so that 
$(\bfv_1 \otimes \cdots \otimes \bfv_p 
\otimes \omega^1 \otimes \cdots \otimes \omega^q)^\vee = 
(\omega^1)^\vee \otimes \cdots \otimes (\omega^q)^\vee 
\otimes \bfv_1^\vee \otimes \cdots \otimes \bfv_p^\vee$.  
In the physics literature,
the dual of a metric is called the inverse metric
$\bfg^{\vee} \in T^{2,0}V$, 
for reasons that become clear below.

For an integer $j$ between 1 and $p$ 
and for an integer $k$ between 1 and $q$,
we can define isomorphisms
\begin{align}
l_j = c(j,2) \circ (g^\vee \otimes -)&: T^{p,q}V \to T^{p-1,q+1}V ,
\nonumber\\
r_k  = c(2,k) \circ (g \otimes -)&: T^{p,q}V \to T^{p+1,q-1}V ,
\label{isolr}
\end{align}
called {\em lowering the $j$-index} 
and {\em raising the $k$-index},
respectively.
One can verify that $r_1(g) = r_2(g) = \iota = l_1(g^\vee) = l_2(g^\vee)$.

\begin{example} 
Now suppose our vector space has 
an ordered basis $\{\bfe_1, \dots, \bfe_n\}$.
Then we can write all the above in components.
The components of a contraction of a $(p+1,q+1)$-tensor are given by 
$$
c_{a,b}(\bfT)^{j_1 \cdots j_p}{}_{k_1 \cdots k_q} = 
\sum_{l=1}^{n} T^{j_1 \cdots j_{b-1} l j_{b} \cdots j_p}
{}_{k_1 \cdots k_{a-1} l k_{a} \cdots k_q}.
$$
In what follows,
we omit the summation sign in the notation
(Einstein summation convention).
There is a special notation 
(popular in the physics community) 
for the components of a tensor after raising and lowering indices.
We illustrate with some examples.
First, 
suppose $\bfT$ is a $(2,0)$-tensor with components $T^{jk}$.
Then,
\begin{align*}
l_1\bfT & = T_j{}^k~ \bfe_k \otimes \bfe^j , 
\\
l_2\bfT & = T^j{}_k~ \bfe_j \otimes \bfe^k .
\end{align*}
Notice that $T^{lj}g_{lk} = T_k{}^j$ and $T^{jl}g_{lk} = T^j{}_k$.
Now suppose that $\bfT$ is a $(0,3)$-tensor.
Then
$$
r_3r_1\bfT = T^j{}_k{}^l \bfe_j \otimes \bfe_l \otimes \bfe^k ,
$$
with $T^j{}_k{}^l = T_{mkn}g^{jm}g^{ln}$.
By lowering an index on the dual metric,
one sees that $g^{jl} g_{lk} = \delta^j{}_k$, 
so the matrices $g^{jk}$ and $g_{kj}$ are inverses.
\end{example} 

One can apply all of the above to tensor fields by working fiberwise.
Let $E \to M$ be a smooth vector bundle.
Let $\bfT$ be a $(p,q)$-tensor field with values in $E$.
Then one can define contractions $c_{j,k}\bfT$.
Next suppose that $M$ has a Riemannian metric $\bfg$.
One can define  $l_j\bfT$ and  $r_j\bfT$, 
lowering and raising indices.
Finally, 
assume  that there are sections $\{\bfe_1, \cdots, \bfe_n\}$ 
of the bundle so that for every $x \in M$, 
$\{\bfe_1(x), \dots, \bfe_n(x)\}$ is a basis for the fiber $E_x$.
Then one can define components using the special notation above.

Our philosophy for the rest of the paper is 
{\em define globally, compute locally}!

\section{Bipartite spaces}
\label{Bipartite spaces}

We define bipartite spaces,
discuss some of their properties,
and give examples.

\subsection{Basics}

Let $(M,\rho)$ be a Riemannian manifold 
with $\rho(\bfy) = \sqrt{\bfr(\bfy,\bfy)}$; 
$\rho$ is the norm and $\bfr$ is the associated inner product.
An alternative notation for $\rho(\bfy)$ is $\|\bfy\|$
and for $\bfr(\bfy,\bfy)$ is $\langle \bfy,\bfy\rangle_\rho$.
Let $\bfs$ be a symmetric nonnegative $(0,2)$-tensor on $M$.
We assume that the eigenvalues of $\bfs$ are less than one 
and greater than or equal to zero.
More precisely, 
for every $x \in M$, let $s_x : T_xM \to T_xM$ 
be the unique endomorphism satisfying 
$\bfs(\bfy,\bfy') = \bfr(s_x(\bfy),\bfy')$ 
for all $\bfy,\bfy' \in T_xM$; 
then all eigenvalues of $s_x$ are assumed to be less than one 
and greater than or equal to zero.
Note that $\bfr(s_x(\bfy),\bfy') = \bfr(\bfy,s_x(\bfy'))$.

We call $\bfs$ the {\em bipartite form} and 
$\sigma(\bfy) = \sqrt{\bfs(\bfy,\bfy)}$ the {\em bipartite factor}.

The eigenvectors of $s_x$ with zero eigenvalue
span a subspace $S_x$ of $T_xM$.  Let $S = \cup_{x\in M} S_x \subset TM$.

\begin{theorem}
Let $F^{\pm} = \rho \pm \sigma$.
Then $(M,S,F^+)$ and $(M,S,F^-)$ are partial Finsler manifolds.  
\end{theorem}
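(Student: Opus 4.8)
The plan is to verify directly the clauses of Definition~\ref{afmslit}, which for a \emph{partial} Finsler manifold require only that $S$ be a closed, conelike subset of $TM$ containing the zero section, that $F^{\pm}$ be smooth and positive on $TM-S$, and that in each tangent space $F^{\pm}|_x$ satisfy homogeneity; positive definiteness of the fundamental tensor is \emph{not} needed here, since the claim concerns partial and not almost Finsler manifolds.

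First I would record the basic properties of $\sigma$. Since $s_x$ is self-adjoint and positive semidefinite with respect to $\bfr$, the quantity $\bfs(\bfy,\bfy)=\bfr(s_x\bfy,\bfy)$ is nonnegative and vanishes exactly when $\bfy\in\ker s_x$, so $\sigma=\sqrt{\bfs(\bfy,\bfy)}$ is a well-defined continuous function on $TM$ with $S_x=\ker s_x=\{\bfy\in T_xM\mid\sigma(\bfy)=0\}$ and hence $S=\sigma^{-1}(0)$. The slit conditions follow at once: $S$ is closed as the zero set of the continuous function $\sigma$; each fiber $S\cap T_xM=S_x$ is a linear subspace, so $S$ is conelike, contains the zero section, and restricts in each tangent space to the nonempty, closed, conelike set required by Definition~\ref{pms}.

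Next I would check smoothness and homogeneity on $TM-S$. On this set $\bfy\notin S_x$, so in particular $\bfy\neq\bfo$ and $\rho=\sqrt{\bfr(\bfy,\bfy)}$ is smooth, while $\bfs(\bfy,\bfy)>0$ exhibits $\sigma$ as the square root of a strictly positive smooth function and hence smooth; thus $F^{\pm}=\rho\pm\sigma$ is smooth on $TM-S$. Homogeneity is immediate from $\rho(\lambda\bfy)=\lambda\rho(\bfy)$ together with $\sigma(\lambda\bfy)=\sqrt{\lambda^2\bfs(\bfy,\bfy)}=\lambda\sigma(\bfy)$ for $\lambda>0$, giving $F^{\pm}(\lambda\bfy)=\lambda F^{\pm}(\bfy)$.

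The main step, and the only place where the eigenvalue hypothesis is used, is positivity. For $F^+=\rho+\sigma$ this is trivial, since $\rho>0$ and $\sigma\ge 0$ on $TM-S$. For $F^-=\rho-\sigma$ I would compute $\rho^2-\sigma^2=\bfr(\bfy,\bfy)-\bfr(s_x\bfy,\bfy)=\bfr((\id-s_x)\bfy,\bfy)$; the assumption that every eigenvalue of $s_x$ lies in $[0,1)$ makes $\id-s_x$ positive definite, so this is strictly positive for every $\bfy\neq\bfo$, in particular on $TM-S$. Hence $\rho>\sigma\ge 0$ and $F^->0$. Assembling these verifications shows that $(M,S,F^{\pm})$ satisfies every clause of Definition~\ref{afmslit}, so both are partial Finsler manifolds. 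I expect no serious obstacle: the argument is a direct check, with the eigenvalue bound doing the essential work only in the positivity of $F^-$.
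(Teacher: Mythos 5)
Your proposal is correct and takes essentially the same approach as the paper: the paper's (very terse) proof also reduces everything to the positivity of $F^{-}$, which it gets from the spectral-norm inequality $\|s_x(\bfy)\|\leq\|s_x\|\,\|\bfy\|$ with $\|s_x\|<1$, while you package the same eigenvalue bound as positive definiteness of $\id-s_x$ in $\rho^2-\sigma^2=\bfr((\id-s_x)\bfy,\bfy)>0$. Your write-up additionally spells out the routine clauses (closedness and conelikeness of $S$, smoothness, homogeneity) that the paper leaves implicit, which is a completeness improvement rather than a different method.
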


\begin{proof}
To prove that $F^{\pm}(\bfy) > 0$, 
we use that $s_x$ is Hermitian with respect to the inner product $\bfr$ 
and the spectral-norm inequality 
$$
\| s_x(\bfy)\| \leq \|s_x\| \|\bfy\| \ ,
$$
where $ \|s_x\|$ is the absolute value of the maximal eigenvalue of $s_x$.
\end{proof}

The partial Finsler manifolds $(M,S,F^{\pm})$ are called {\em bipartite spaces}.
Below we examine the question of whether the fundamental tensor $\bfg$
is positive definite on these spaces
and hence whether they are almost Finsler manifolds.

Matching the terminology accompanying Definition \ref{afmslit},
$S$ is called the {\em slit}.
Let $I^{\pm} = (F^{\pm})^{-1}(1)$ be the indicatrices.  
The indicatrix $I^+$ is called the {\em lemon},
while the indicatrix $I^-$ is called the {\em apple} 
(see \hyperref[fig1]{Fig.~1} below and use your imagination).  

Let $S^f$ be the space of fixed points
of the indicatrix $I^{\pm} = (F^{\pm})^{-1}(1)$ 
under the scaling $\bfs \mapsto \lambda \bfs$, 
$\lambda \in (0,1)$,
and let $S^f_x = S^f \cap T_xM$.
Note that $S = \R S^f$.

\begin{lemma}
$S^f_x$ is diffeomorphic to a sphere of dimension $\dim S_x - 1$.
\end{lemma}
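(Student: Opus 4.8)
The plan is to compute $S^f_x$ explicitly and then recognize it as a round sphere sitting inside the linear subspace $S_x$. First I would record how the data transform under the scaling $\bfs \mapsto \lambda\bfs$: since $\sigma(\bfy) = \sqrt{\bfs(\bfy,\bfy)}$, replacing $\bfs$ by $\lambda\bfs$ replaces the bipartite factor by $\sqrt{\lambda}\,\sigma$ and hence replaces the norm by $F^\pm_\lambda = \rho \pm \sqrt{\lambda}\,\sigma$. (Here one uses that $F^\pm = \rho \pm \sigma$ extends continuously over the slit, so the scaled indicatrices are read as level sets of these continuous extensions.) A point $\bfy \in T_xM$ is then fixed by the whole family precisely when $\rho(\bfy) \pm \sqrt{\lambda}\,\sigma(\bfy) = 1$ holds simultaneously for all $\lambda \in (0,1)$.

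The second step is to read off what that condition means. Because $\sqrt{\lambda}$ is a nonconstant function of $\lambda$, the equation $\rho(\bfy) \pm \sqrt{\lambda}\,\sigma(\bfy) = 1$ can hold for every $\lambda \in (0,1)$ only if the coefficient of $\sqrt{\lambda}$ vanishes, namely $\sigma(\bfy) = 0$, and then $\rho(\bfy) = 1$. Thus
$$
S^f_x = \{\bfy \in T_xM \mid \sigma(\bfy) = 0,\ \rho(\bfy) = 1\}.
$$

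Next I would identify the locus $\sigma(\bfy) = 0$ with the subspace $S_x$. Since $s_x$ is self-adjoint and positive semidefinite with respect to $\bfr$, it admits a positive-semidefinite square root $\sqrt{s_x}$, and $\sigma(\bfy)^2 = \bfs(\bfy,\bfy) = \bfr(s_x\bfy,\bfy) = \|\sqrt{s_x}\,\bfy\|^2$. Hence $\sigma(\bfy) = 0$ iff $\sqrt{s_x}\,\bfy = 0$ iff $s_x\bfy = 0$, i.e. iff $\bfy$ lies in the kernel of $s_x$, which is exactly the zero-eigenspace $S_x$. Consequently $S^f_x = \{\bfy \in S_x \mid \rho(\bfy) = 1\}$ is the unit sphere of the inner-product space $(S_x, \bfr|_{S_x})$. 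Choosing an $\bfr$-orthonormal basis of $S_x$ identifies it with the standard Euclidean unit sphere, which is diffeomorphic to $S^{\dim S_x - 1}$, as claimed. (As a consistency check, $\R S^f_x = S_x$, which matches the stated identity $S = \R S^f$.)

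The argument is essentially routine once the fixed-point set is pinned down, so the only delicate point is the first step: interpreting \emph{fixed under the scaling} $\bfs\mapsto\lambda\bfs$ and observing that these fixed points lie in the slit $S$, where $F^\pm$ fails to be smooth yet still extends continuously. Making that interpretation precise, and thereby justifying that the fixed locus is cut out by the two honest equations $\sigma = 0$ and $\rho = 1$, is where I would spend the most care; everything afterward is the standard fact that the unit sphere of a finite-dimensional inner-product space is a round sphere of one lower dimension.
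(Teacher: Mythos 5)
Your proof is correct and takes essentially the same approach as the paper, whose entire proof is the one-line observation that $S^f_x$ is the intersection of the unit sphere of $\rho$ with the subspace $S_x$ --- precisely the identity your computation establishes. The additional care you take in pinning down the fixed-point condition (forcing $\sigma(\bfy)=0$ and $\rho(\bfy)=1$) and in identifying $\{\sigma=0\}$ with $\ker(s_x)$ simply makes explicit the details the paper leaves to the reader.
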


\begin{proof}
$S^f_x$ is the intersection of the unit sphere of $\rho$ 
with the subspace 
$S_x$. 
\end{proof}

\begin{example}
If for each $x \in M$,  
 all of the eigenvalues of $s_x$ are equal,
the bipartite spaces $(M,S,F^\pm)$ are Riemannian manifolds.
In this case $\bfr = \lambda \bfs$ where $\lambda : M \to [0,1)$.
\end{example}

\begin{example}
Let $\bfa: M \to TM$ be a vector field on $M$ 
so that $\rho(\bfa(x)) < 1$ for all $x \in M$.
Then the $\bfa$ manifolds of Example~\ref{abmfd}
are bipartite spaces~\cite{ak11}.
\label{examfd}
\end{example}

\begin{example}
\label{comp}
Let $\bfb: M \to TM$ be a vector field on $M$ 
so that $\rho(\bfb(x)) < 1$ for all $x \in M$.
Then the $\bfb$ manifolds of Example~\ref{abmfd}
are bipartite spaces~\cite{ak11}.

Note that the bipartite form for the $\bfa$ manifolds
can be written using the isomorphisms defined in Eq.~\eqref{isolr}
as $\bfs = \alpha \otimes \alpha$,
where $\alpha  = l_1(\bfa)$.
The bipartite form for the $\bfb$ manifolds
can be written 
as $\bfs =c_{1,1}^{n-1}({ \star\beta \otimes \star\beta})$,
where $\beta = l_1(\bfb)$
and $\star$ is the Hodge star operator on $\bigwedge V$,
and where the notation $c_{1,1}^{n-1} = c_{1,1} \circ \cdots c_{1,1}$
indicates contractions between all corresponding indices
on the two $\star \beta$ factors
except for the $n$th.
The complementarity of the $\bfa$ and $\bfb$ manifolds 
is reflected in the parallel between the direct-product expressions 
of the corresponding bipartite forms.
\end{example}

\begin{conjecture}
A bipartite space of the form $(M,S,F^+)$ 
is an almost Finsler manifold.
\label{conj1}
\end{conjecture}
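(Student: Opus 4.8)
The plan is to reduce the statement to a fiberwise positive-definiteness check and then exploit the additive structure $F^+ = \rho + \sigma$. Since the preceding theorem already establishes that $(M,S,F^+)$ is a partial Finsler manifold, the only outstanding axiom is positive definiteness of the fundamental tensor $\bfg(\bfy)$, and by Definition~\ref{ams} this is a condition imposed on each tangent space separately. I would therefore fix $x \in M$, write $V = T_xM$ with inner product $\bfr$, Riemannian norm $\rho$, self-adjoint operator $s = s_x$, and bipartite factor $\sigma(\bfy) = \sqrt{\bfs(\bfy,\bfy)}$, so that $F = \rho + \sigma$ and $S_x = \ker s$. Because $s$ is positive semidefinite, $\bfs(\bfy,\bfy) > 0$ holds exactly when $\bfy \notin \ker s = S_x$; hence on $V - S_x$ the factor $\sigma$ is smooth and positive and $F$ is smooth, so $\bfg(\bfy)$ is defined precisely there.

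The computational heart is a Hessian identity. Denoting by $\mathrm{Hess}$ the second derivative on $V - S_x$ and by $dF$ the differential of $F$ at a fixed $\bfy$, I would record
\[
\bfg = dF \otimes dF + F\,\bigl(\mathrm{Hess}\,\rho + \mathrm{Hess}\,\sigma\bigr),
\]
which follows from $\bfg = \mathrm{Hess}(F^2/2)$ and $F = \rho+\sigma$. Each of the three ingredients is positive semidefinite: $dF \otimes dF$ is a square, $F > 0$, and $\mathrm{Hess}\,\rho$, $\mathrm{Hess}\,\sigma$ are the (convex) Hessians of a Riemannian norm and a seminorm. A direct differentiation gives $\mathrm{Hess}(\rho)(\bfv,\bfv) = \|\bfv_\perp\|^2/\rho$, where $\bfv_\perp$ is the $\bfr$-orthogonal projection of $\bfv$ away from $\bfy$, and an analogous computation shows $\mathrm{Hess}\,\sigma \geq 0$, with both $\bfy$ and $\ker s$ lying in its kernel. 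Thus $\bfg(\bfy)$ is automatically positive semidefinite.

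To upgrade to positive definiteness I would analyze the common kernel. Suppose $\bfg(\bfy)(\bfv,\bfv) = 0$. Since all three summands are nonnegative and $F > 0$, each must vanish; in particular $\mathrm{Hess}(\rho)(\bfv,\bfv) = \|\bfv_\perp\|^2/\rho = 0$ forces $\bfv = c\,\bfy$ for some scalar $c$, and then $dF(\bfv) = c\,dF(\bfy) = cF = 0$ by the Euler relation $dF(\bfy) = F > 0$, giving $c = 0$ and $\bfv = 0$. Hence $\bfg(\bfy)$ is positive definite for every $\bfy \in V - S_x$, which is exactly the positive-definiteness clause needed to promote the partial Finsler manifold $(M,S,F^+)$ to an almost Finsler manifold.

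The feature I would most want to state cleanly is that the seminorm $\sigma$ plays \emph{no} role in the definiteness argument beyond keeping $F$ smooth and positive: the Riemannian Hessian $\mathrm{Hess}\,\rho$ already controls every direction transverse to $\bfy$, while homogeneity controls the radial direction, so the eigenvalue bound ``$<1$'' (indispensable for the companion statements about $F^-$, cf.\ Example~\ref{vecdecompb} and Corollary~\ref{fminus}) is never invoked. The remaining points are bookkeeping rather than obstruction: confirming that $S_x$ is exactly $\ker s$ so that smoothness holds precisely off the slit, deriving $\mathrm{Hess}\,\sigma \geq 0$ from the nonnegativity of $\bfs$, and noting that the fiberwise conclusion is literally the definition of positive definiteness for $(M,S,F^+)$. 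I expect the only genuine ``work'' to be the clean derivation of the Hessian identity and the kernel analysis above.
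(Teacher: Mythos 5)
The first thing to say is that the paper does not actually prove this statement: it appears as Conjecture~\ref{conj1} and is left open, with only special cases established --- the $\bfa$ spaces (Lemma~\ref{aminusmemma}, by local identification with Randers norms) and, via the citation to Ref.~\cite{krt12}, the case where $\bfs$ has a single positive eigenvalue of arbitrary multiplicity. So there is no paper proof to compare against; your argument must stand on its own, and having checked it step by step I believe it does. The reduction to a fiberwise positive-definiteness check is legitimate, since the paper's theorem on bipartite spaces already supplies the partial Finsler structure and Definition~\ref{afmslit} imposes positive definiteness fiber by fiber. The identity $\bfg = dF \otimes dF + F(\mathrm{Hess}\,\rho + \mathrm{Hess}\,\sigma)$ is just $\tfrac{1}{2}(F^2)_{jk} = F_jF_k + FF_{jk}$ together with $F = \rho + \sigma$, and the kernel analysis via the equality case of Cauchy--Schwarz for $\bfr$ plus Euler's relation $y^jF_j = F > 0$ is airtight. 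The one step you leave implicit, $\mathrm{Hess}\,\sigma \geq 0$, deserves to be displayed, since it is exactly where nonnegativity of $\bfs$ enters:
$$
\sigma_{jk}v^jv^k
= \frac{\bfs(\bfv,\bfv)\,\bfs(\bfy,\bfy) - \bfs(\bfy,\bfv)^2}{\sigma(\bfy)^3}
\geq 0
$$
by Cauchy--Schwarz for the positive semidefinite form $\bfs$ (equivalently, by convexity of the seminorm $\sigma$, as you say). Your side observations also check out: the eigenvalue bound $<1$ is indeed never used for $F^+$, and the argument visibly breaks for $F^- = \rho - \sigma$, where $-F\,\mathrm{Hess}\,\sigma$ is no longer nonnegative --- exactly as Theorem~\ref{bipminus} demands it must.

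It is worth articulating what your regrouping buys relative to the route the paper and Ref.~\cite{krt12} follow. Adapting the component formula \eqref{KR} to $F^+$ gives
$$
g_{jk} = \frac{F}{\rho}\, r_{jk} + \frac{F}{\sigma}\, s_{jk}
- \rho\sigma
\left(\frac{\rho_j}{\rho} - \frac{\sigma_j}{\sigma}\right)
\left(\frac{\rho_k}{\rho} - \frac{\sigma_k}{\sigma}\right),
$$
which contains an indefinite rank-one correction that obscures positivity; it is plausibly this term that confined earlier work to the single-eigenvalue case. Your decomposition packages the same algebra so that every summand is manifestly positive semidefinite, with strict definiteness isolated in the radial direction where homogeneity supplies it. Once the $\mathrm{Hess}\,\sigma$ computation is written out explicitly, this is not a proposal but a proof, and it would upgrade Conjecture~\ref{conj1} to a theorem.
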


\begin{conjecture}
A bipartite space of the form $(M,S,F^-)$ 
is an almost Finsler manifold if and only if for all $x \in M$, 
the slit $S_x = \ker(s_x)$ has dimension 0, $\dim M$, or $\dim M -1$.
\label{conj2}
\end{conjecture}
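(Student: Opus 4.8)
The plan is to reduce Conjecture~\ref{conj2} to a fiberwise, finite-dimensional positivity statement and then to a condition on the eigenvalues of $s_x$. Fix $x\in M$, write $s=s_x$, and choose an $\bfr$-orthonormal eigenbasis $\bfe_1,\dots,\bfe_n$ of $T_xM$ with $s\bfe_i=\lambda_i\bfe_i$ and $\lambda_i\in[0,1)$, so that $S_x=\ker s$ is spanned by the eigenvectors with $\lambda_i=0$. For $\bfy=y^i\bfe_i$ one has $\rho^2=\sum(y^i)^2$ and $\sigma^2=\langle s\bfy,\bfy\rangle=\sum\lambda_i(y^i)^2$, and $F^-=\rho-\sigma>0$ on $T_xM-S_x$ since $\max_i\lambda_i<1$. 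By definition $(M,S,F^-)$ is an almost Finsler manifold exactly when the fundamental tensor $\bfg$, the Hessian of $\tfrac12(F^-)^2$, is positive definite at every $\bfy\notin S_x$; the problem is therefore to decide for which $s$ this holds.

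The first reduction strips the rank-one part off $\bfg$. Writing $g_{ij}=F_iF_j+F F_{ij}$ with $F=F^-$ and using that both Hessians $\rho_{ij}$ and $\sigma_{ij}$ annihilate $\bfy$ (homogeneity), for $v=c\bfy+w$ with $w\perp\bfy$ one gets $g_{ij}v^iv^j=(cF+F_iw^i)^2+F\,F_{ij}w^iw^j$. Minimizing in $c$ shows that $\bfg$ is positive definite iff the angular metric $\bfh$, with $h_{ij}=F F_{ij}=g_{ij}-F_iF_j$, is positive definite on the hyperplane $\bfy^{\perp}$, i.e. iff $F_{ij}w^iw^j>0$ for every $0\ne w\perp\bfy$. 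Normalizing $\rho(\bfy)=|w|=1$ and computing $\rho_{ij}w^iw^j=1$ and $\sigma_{ij}w^iw^j=\langle sw,w\rangle/\sigma-\langle s\bfy,w\rangle^2/\sigma^3$, the condition becomes
\[
\langle s\bfy,\bfy\rangle\,\langle sw,w\rangle-\langle s\bfy,w\rangle^{2}\;<\;\langle s\bfy,\bfy\rangle^{3/2}
\]
for all unit $\bfy\notin S_x$ and unit $w\perp\bfy$. The left side is $\det\!\big(s|_{\Pi}\big)\ge0$, the determinant of the (nonnegative) $2\times2$ compression of $s$ to the plane $\Pi=\mathrm{span}(\bfy,w)$, so the whole question is controlled by the rank-two compressions of $s$.

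With this reformulation the ``only if'' implication and two of the three sufficiency cases are within reach. If $2\le\mathrm{rank}\,s\le n-1$, i.e. $\dim S_x\in\{1,\dots,n-2\}$, then $s$ has a kernel vector $\bfk$ and two orthonormal positive eigenvectors $\bfe_i,\bfe_j$; taking $\bfy=\delta\bfe_i+\bfk$ and $w=\bfe_j$ gives $\langle s\bfy,w\rangle=0$, $\sigma=\delta\sqrt{\lambda_i}$, and $F_{ij}w^iw^j=\rho_{ij}w^iw^j-\lambda_j/(\delta\sqrt{\lambda_i})\to-\infty$ as $\delta\to0^+$, so $\bfg$ fails to be positive definite. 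This proves that being almost Finsler forces $\dim S_x\in\{0,\dim M-1,\dim M\}$. For sufficiency, $\dim S_x=\dim M$ is trivial ($s=0$, $F^-=\rho$), and $\dim S_x=\dim M-1$ is the rank-one case $s=\lambda\,\bfe_1\otimes\bfe_1$, where a short computation gives $\sigma_{ij}\equiv0$, hence $F_{ij}=\rho_{ij}$ and $\bfg>0$; this recovers the $\bfa$ spaces.

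The remaining case $\dim S_x=0$, in which $s$ has full rank and all $\lambda_i>0$, is the main obstacle. Here there is no kernel vector to exploit, but positivity is no longer automatic: the displayed inequality must hold on every rank-two compression, and taking $\bfy,w$ to be the eigenvectors of $s|_\Pi$ shows it is equivalent to $\mu_{\max}(\Pi)^2<\mu_{\min}(\Pi)$ for the eigenvalues of each compression. By Cauchy interlacing $\mu_{\max}(\Pi)\le\lambda_{\max}$ and $\mu_{\min}(\Pi)\ge\lambda_{\min}$, and the extreme plane $\mathrm{span}(\bfe_{\max},\bfe_{\min})$ is attained, so the full-rank case is almost Finsler \emph{iff} $\lambda_{\max}^2<\lambda_{\min}$. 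I expect this to be the crux: the hypothesis $\lambda_i<1$ alone does not force $\lambda_{\max}^2<\lambda_{\min}$, so the plan suggests that the full-rank clause of Conjecture~\ref{conj2} needs an additional balancing hypothesis on the nonzero eigenvalues (automatically met by the homogeneous forms $\bfs=\alpha\otimes\alpha$ and the $\bfb$-type forms behind the known $\bfa$ and $\bfb$ results). Pinning down this inequality, and thereby settling whether the statement holds as written or only after such a refinement, is where the real work lies.
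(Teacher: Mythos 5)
First, a point of orientation: the paper does not prove this statement — it is offered there as a conjecture, and the closest thing to a proof is Theorem~\ref{bipminus}, which is stated under the blanket hypothesis $S_x \neq 0$ for every $x$ and concludes the equivalence with $\dim S_x \in \{\dim M - 1, \dim M\}$. For that portion your argument is correct and mechanically the same as the paper's, but run through a cleaner pipeline. The paper substitutes the indicatrix point $\bfy = (\varepsilon, 0, \dots, 0, 1+\eta)$ into the explicit fundamental-tensor formula \eqref{KR} imported from Ref.~\cite{krt12} and extracts $g_{22} = 1 - \lambda_2 - \lambda_2/(\sqrt{\lambda_1}\,\varepsilon) + O(\varepsilon) < 0$; you instead prove the general reduction that $\bfg(\bfy)$ is positive definite iff $F_{ij}w^iw^j>0$ on $w \perp \bfy$, recast this as the compression inequality $\det(s|_\Pi) < \langle s\bfy,\bfy\rangle^{3/2}$, and then the same degenerating family ($\bfy = \delta\bfe_i + \bfk$ near the kernel, $w$ a second positive eigendirection) produces the same divergent term $-\lambda_j/(\delta\sqrt{\lambda_i})$. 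Your route is self-contained (no appeal to \eqref{KR}), treats all fibers uniformly rather than only those with $S_x \neq 0$, and your direct verification $\sigma_{ij}=0$ in the corank-one case is simpler than the paper's detour through the Randers correspondence and Ref.~\cite{bcs}.

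Where you genuinely depart from the paper is the full-rank case, and here your only failing is excessive caution: your criterion is correct, and it already settles the question you defer as ``the real work.'' Per plane the condition is $\mu_{\max}(\Pi)^2 < \mu_{\min}(\Pi)$, Cauchy interlacing gives $\mu_{\max}(\Pi) \le \lambda_{\max}$ and $\mu_{\min}(\Pi) \ge \lambda_{\min}$, and the plane spanned by the extreme eigenvectors attains these bounds, so a full-rank fiber is almost Minkowski iff $\lambda_{\max}^2 < \lambda_{\min}$. Since the standing hypothesis $\lambda_i \in [0,1)$ does not imply this, the ``if'' direction of Conjecture~\ref{conj2} is \emph{false as stated}: take $M = \R^2$ flat with constant $\bfs = \mathrm{diag}(0.9,\,0.1)$, so $\dim S_x = 0$ everywhere but $\lambda_{\max}^2 = 0.81 > 0.1 = \lambda_{\min}$; concretely, at $\bfy = (0,1)$, $w=(1,0)$ one has $F_1 = 0$ and $g_{11} = F F_{11} = \bigl(1-\sqrt{0.1}\bigr)\bigl(1 - 0.9/\sqrt{0.1}\bigr) < 0$. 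The same computation shows the Hessian of $F^-$ at $(0,1)$ has a negative eigenvalue, so $F^-$ is not even convex there, contradicting the paper's remark that $F^-$ is strictly convex whenever the slit is the zero section. You should therefore not present your analysis as a partial proof of the conjecture but as a correction of it: state and prove the theorem that $(M,S,F^-)$ is almost Finsler iff for every $x$ either $\dim S_x \in \{\dim M - 1, \dim M\}$, or $\dim S_x = 0$ and $\lambda_{\max}(s_x)^2 < \lambda_{\min}(s_x)$, and exhibit the example above as the counterexample to the original clause.
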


The rest of this subsection is devoted to evidence for these conjectures.

\begin{lemma}
\label{aminusmemma}
The $\bfa$ spaces $(M,S,F^\pm_{\bfa})$ are almost Finsler manifolds.
\end{lemma}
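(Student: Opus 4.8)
The plan is to reduce the manifold statement to its fiberwise counterpart, already recorded in Example~\ref{vecdecompa}, and then to supply the single genuinely global ingredient: smoothness of $F^\pm_\bfa$ on the slit tangent bundle $TM-S$. Throughout I take $\bfa$ to be nowhere vanishing, as is implicit in forming the slit $S=\cup_{x\in M}(\R\bfa(x))^\perp$. First I would dispatch the slit hypotheses of Definition~\ref{afmslit}. A vector $\bfy\in T_xM$ lies in $(\R\bfa(x))^\perp$ exactly when $\langle\bfa(x),\bfy\rangle_\rho=0$, so $S=\{\bfy\in TM\mid\langle\bfa(\pi(\bfy)),\bfy\rangle_\rho=0\}$ is the zero set of a continuous function and hence closed; it is conelike because the defining condition is homogeneous in $\bfy$, and it contains the zero section since $\langle\bfa,0\rangle_\rho=0$.

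The core of the argument is a local identification with Randers norms. Writing $\bfy_\|$ for the $\rho$-orthogonal projection of $\bfy$ onto $\R\bfa(\pi(\bfy))$ gives $\rho(\bfa)\,\rho(\bfy_\|)=|\langle\bfa,\bfy\rangle_\rho|$, so that $F^\pm_\bfa(\bfy)=\rho(\bfy)\pm|\langle\bfa(\pi(\bfy)),\bfy\rangle_\rho|$. I would cover $TM-S$ by the two open sets $U_+=\{\langle\bfa,\bfy\rangle_\rho>0\}$ and $U_-=\{\langle\bfa,\bfy\rangle_\rho<0\}$, whose union is precisely the complement of the slit. On $U_+$ the absolute value opens with a $+$ sign and on $U_-$ with a $-$ sign, so on each piece $F^\pm_\bfa$ coincides with one of the Randers norms $F_{\pm\bfa}$ of Example~\ref{abmfd}, which is exactly the correspondence recorded there. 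Since $\rho$ is smooth off the zero section (which lies in $S$) and $\langle\bfa,\cdot\rangle_\rho$ is smooth, each Randers norm is smooth and strictly positive on its half-space, and therefore $F^\pm_\bfa$ is a smooth map $TM-S\to(0,\infty)$.

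It then remains to transfer positive definiteness. The fundamental tensor $\bfg(\bfy)$ is a second derivative of $(F^\pm_\bfa)^2/2$ and hence depends only on the germ of $F^\pm_\bfa$ at $\bfy$; on a neighborhood of any $\bfy\in TM-S$ this germ equals that of a Randers norm, whose fundamental tensor is positive definite because Randers norms are Minkowski norms. Thus $\bfg(\bfy)$ is positive definite for every $\bfy\in TM-S$, which is exactly the almost Minkowski condition of Definition~\ref{ams} applied fiber by fiber; equivalently, each triple $(T_xM,\,S\cap T_xM,\,F^\pm_\bfa|_x)$ is the vector-space $\bfa$-space shown to be an almost Minkowski space in Example~\ref{vecdecompa}. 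Together with the slit and smoothness checks, this verifies every clause of the definition of an almost Finsler manifold.

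The main obstacle is conceptual rather than computational: $F^\pm_\bfa$ is assembled from the non-smooth quantity $\rho(\bfy_\|)=|\langle\bfa,\bfy\rangle_\rho|/\rho(\bfa)$, so it is not obvious a priori where smoothness or positive definiteness could fail. The whole proof hinges on the observation that the non-smoothness is confined exactly to the slit $S=\{\langle\bfa,\bfy\rangle_\rho=0\}$, and that away from $S$ the norm locally becomes an honest Randers norm, so that both smoothness and positive definiteness are inherited at no extra cost.
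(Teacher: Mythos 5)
Your proof is correct and takes essentially the same route as the paper: on the complement of the slit the sign of $\langle\bfa,\bfy\rangle_\rho$ is locally constant, so $F^\pm_\bfa$ locally coincides with one of the Randers norms $F_{\pm\bfa}$, and positive definiteness is inherited from the Randers case exactly as in Example~\ref{vecdecompa} (the paper cites Sections 11.1--11.2 of Bao--Chern--Shen for the Randers fact). Your extra verifications---that $S$ is closed, conelike, and contains the zero section, and the germ argument for the fundamental tensor---are details the paper leaves implicit, not a different method.
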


\begin{proof}
As described in Example \ref{vecdecompa},
 $(M,S,F^\pm_{\bfa})$ locally
matches one of the Randers spaces $(M,F_{\pm\bfa})$,
and the latter are almost Finsler manifolds, 
with the proof given in sections 11.1 and 11.2 of Ref.~\cite{bcs}. 

\end{proof}

Other special cases are known. 
Conjecture~\ref{conj1} has been proved in Ref.~\cite{krt12} for the case 
where $\bfs$ has one positive eigenvalue of arbitrary multiplicity.
This includes $\bfa$ space $(M,S,F^+_{\bfa})$
and $\bfb$ space $(M,S,F^+_{\bfb})$,
among other spaces.

In contrast,
the following theorem shows that some bipartite spaces $(M,S,F^-)$
are not almost Finsler manifolds.

\begin{theorem}
\label{bipminus}
Let $(M,S,F^-)$ be a 
bipartite partial Finsler manifold so that for each $x \in M$, $S_x \not = 0$.  Then $(M,S,F^-)$ is almost Finsler 
if and only if for all $x \in M$, the slit $S_x = \ker(s_x)$ 
has dimension  $\dim M -1$ or $\dim M$.
\end{theorem}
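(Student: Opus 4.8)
The plan is to reduce the assertion to a fiberwise computation of the fundamental tensor $\bfg$ and to read off its signature from the rank of $s_x$. Since positive definiteness of $\bfg$ is checked one tangent space at a time, fix $x \in M$ and work in $V = T_xM$ with inner product $\bfr$. Using the spectral theorem for the $\bfr$-symmetric endomorphism $s_x$, split $V = V_0 \oplus V_+$ orthogonally, where $V_0 = \ker(s_x) = S_x$ and $V_+$ is spanned by the positive-eigenvalue eigenvectors; then $\sigma$ vanishes exactly on $V_0$ and $\dim V_+ = \operatorname{rank}(s_x)$. Under the standing hypothesis $S_x \neq 0$, the condition $\dim S_x \in \{\dim M - 1, \dim M\}$ is equivalent to $\operatorname{rank}(s_x) \le 1$, since $\dim S_x = \dim M - \operatorname{rank}(s_x)$. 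Thus the theorem becomes: $(M,S,F^-)$ is almost Finsler if and only if $\operatorname{rank}(s_x) \le 1$ for every $x$.

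Next I would record the fundamental tensor. On $TM - S$, where $F = F^- = \rho - \sigma$ is smooth, one has $g_{jk} = F_j F_k + F F_{jk}$ with $F_{jk} = \rho_{jk} - \sigma_{jk}$, together with the homogeneous Hessians $\rho\,\rho_{jk} = r_{jk} - \rho_j\rho_k$ and $\sigma\,\sigma_{jk} = s_{jk} - \sigma_j\sigma_k$. The crucial observation is that the only term of $g_{jk}$ that can fail to stay bounded as $\bfy$ approaches $S$ is $-F\sigma_{jk}$, since $\sigma_{jk} = (s_{jk} - \sigma_j\sigma_k)/\sigma$ carries the factor $1/\sigma \to \infty$. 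By the Cauchy--Schwarz inequality for the nonnegative form $\bfs$, the quadratic form $s_{jk} - \sigma_j\sigma_k$ is positive semidefinite, and a short computation identifies its kernel as $V_0 \oplus \R\bfy$; hence it is nonzero precisely when $\operatorname{rank}(s_x) \ge 2$.

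For the forward direction I would argue by constructing an indefinite direction. Assume $\operatorname{rank}(s_x) \ge 2$, choose $\bfr$-orthonormal eigenvectors $\bfu, \bfv \in V_+$ with positive eigenvalues (so $\bfs(\bfu,\bfv) = 0$ and $\bfs(\bfv,\bfv) > 0$) and a nonzero $\bfw \in V_0$, and examine $\bfy = \epsilon\bfu + \bfw$ with test vector $\bfv$ as $\epsilon \to 0^+$. Then $\bfy \in V - S_x$ because its $V_+$-component $\epsilon\bfu$ is nonzero, while $\sigma(\bfy) = \epsilon\sqrt{\bfs(\bfu,\bfu)} \to 0$. The orthogonalities give $\sigma_j v^j = \bfs(\bfy,\bfv)/\sigma = 0$ and $\rho_j v^j = \bfr(\bfy,\bfv)/\rho \to 0$, so $F_j v^j \to 0$ and $F\rho_{jk}v^jv^k$ stays bounded, whereas $-F\sigma_{jk}v^jv^k = -F\,\bfs(\bfv,\bfv)/\sigma \to -\infty$. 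Thus $\bfg(\bfy)(\bfv,\bfv) < 0$ for small $\epsilon$, so $\bfg$ is not positive definite and $(M,S,F^-)$ is not almost Finsler.

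For the converse, assume $\operatorname{rank}(s_x) \le 1$ for all $x$. At a point where $s_x$ has rank one, $\bfs = \alpha \otimes \alpha$ for a covector $\alpha$ of $\bfr$-norm less than one, so $\sigma = |\alpha(\bfy)|$ and $(M,S,F^-)$ is locally an $\bfa$ space, which is almost Finsler by Lemma~\ref{aminusmemma}; fibers with $s_x = 0$ satisfy $T_xM - S_x = \emptyset$ and so contribute no points to $TM - S$ and impose no condition. I expect the main obstacle to be the limiting and sign analysis in the forward direction: one must verify that every contribution to $\bfg(\bfy)(\bfv,\bfv)$ other than $-F\sigma_{jk}v^jv^k$ remains bounded and that this term diverges with the correct negative sign, which is exactly where the $\bfr$-orthogonality of $V_0$ and $V_+$ and the $\bfs$-orthogonality of $\bfu$ and $\bfv$ do the work.
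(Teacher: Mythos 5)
Your proof is correct and follows essentially the same route as the paper's: fiberwise reduction via the spectral theorem, identification of the rank-one case with an $\bfa$ space (so Lemma~\ref{aminusmemma} applies) and the rank-zero fibers as trivial, and, for $\operatorname{rank}(s_x)\ge 2$ with $S_x\neq 0$, a point $\epsilon\bfu+\bfw$ approaching the slit at which $\bfg$ tested against a second positive eigenvector $\bfv$ becomes negative --- precisely the paper's choice $\bfy=(\varepsilon,0,\dots,0,1+\eta)$ with test vector $\bfe_2$. The differences are only in packaging: the paper normalizes $\bfy$ to the indicatrix and Taylor-expands the explicit formula for $g_{jk}$ quoted from Ref.~\cite{krt12}, whereas you obtain the divergence $-F\,\bfs(\bfv,\bfv)/\sigma\to-\infty$ directly from the homogeneous Hessian identities and exact orthogonality (and, incidentally, your rank-one normalization $\|\alpha\|=\sqrt{\lambda}$ is the correct one, where the paper writes $\bfa=\lambda\bfe$).
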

 
\begin{proof}
By focusing on a single tangent space, 
one sees that this is really a question about Minkowski spaces. 
Suppose $(V, \langle, \rangle)$ is an inner product space. 
Let $\bfs$ be a symmetric nonnegative $(0,2)$-tensor on $V$ 
and let $s: V \to V$ be the endomorphism 
so that $\bfs(\bfy,\bfy') = \langle s(\bfy),\bfy' \rangle $. 
Assume that the eigenvalues of $s$ are less than one 
and greater than or equal to zero and that there is a positive eigenvalue.
Let $\rho(\bfy) = \|\bfy\|$ and $\sigma(\bfy) = \sqrt{\bfs(\bfy,\bfy)}$.
Let $S = \ker (s)$.
We show that the partial Minkowski space $(V,S,F := \rho -\sigma)$
is almost Minkowski 
if and only if  $\dim S = \dim M$ or $\dim S = \dim M-1$. 

If $\dim S = \dim M$, 
then $F = \rho$, and $(V,S,F)$ is a Euclidean space.
If $\dim S = \dim M -1$, 
then $(V,S,F)$ is $\bfa$ space, 
where $\bfa = \lambda \bfe$ where $\bfe$ is a unit-norm eigenvector
of $s$ with eigenvalue $\lambda \in (0,1)$.
Indeed, $\sqrt{\bfs(\bfy,\bfy)} = |\langle \lambda \bfe, \bfy  \rangle |$ 
for all $\bfy$, since $\bfy = \mu \bfe + \bff$ 
where $\bff \in S^{\perp}$ by the spectral theorem.

Now assume that $\dim M - 2 \geq  \dim S \geq 1$; 
we wish to show that $(V,S,F)$ is not almost Minkowski.
By the spectral theorem, there is a $\rho$-orthonormal basis 
$\{\bfe_1, \dots, \bfe_n\}$ 
of eigenvectors of $s$ with eigenvalues $\lambda_1, \dots, \lambda_n$.
We choose the eigenvectors so that $\lambda_1 \not = 0, \lambda_2 \not = 0$, 
and $\lambda_n = 0$.
Use this basis to identify  $V$ with $\R^n$, and use local coordinates.
 
In these coordinates,
the fundamental tensor $g_{jk}$ 
for the bipartite Minkowski space $(\R^n,S,F)$
can be written as~\cite{krt12}
\begin{equation} \label{KR}
g_{jk} = {F \over \rho} \delta_{jk} 
+ \rho\sigma
\left(\frac{\rho_j}  {\rho} - {\sigma_j \over \sigma}\right)
\left({\rho_k \over \rho} - {\sigma_k \over \sigma}\right)
- {F \over \sigma} s_{jk} 
\end{equation}
where the subscripts $j$, $k$ on $\rho$ and $\sigma$ 
indicate taking partial derivatives.

Consider points on the indicatrix $I$ of the form 
$\bfy = (\varepsilon, 0, \dots, 0, 1 + \eta)$ with 
$\epsilon$ and $\eta$ small positive numbers.
Solving for $\eta$ in $1  = F(\bfy) =  \rho(\bfy) - \sigma(\bfy) 
 = \sqrt{\varepsilon^2 + (1 + \eta)^2} - \sqrt{\lambda_1}\varepsilon$ yields 
$$
\eta = \sqrt{1 + 2\sqrt{\lambda_1 }\varepsilon 
+ (\lambda_1 -1)\varepsilon^2 } -1 
= \sqrt{\lambda_1} \varepsilon + O(\varepsilon^2)
$$
by the Taylor series expansion of $\sqrt{1+x}$ about $0$. 

We use \eqref{KR} to show that $g_{22}(\bfy)$ 
is negative for small $\varepsilon$. Note 
\begin{align*}
\rho(\bfy) &= \sqrt{\varepsilon^2 + (1 + \eta)^2}
= \sqrt{1 + 2 \sqrt{\lambda_1} \varepsilon + O(\varepsilon^2) } 
= 1 + \sqrt{\lambda_1}\varepsilon + O(\varepsilon^2)\,,
\\
\sigma(\bfy) & = \sqrt{\lambda_1}\varepsilon\,,
\quad
\rho_2(\bfy)  = 0\,,
\quad
\sigma_2(\bfy)  = 0\,.
\end{align*}
Some computation then reveals that
\begin{align*}
g_{22}(\bfy) & = 
\frac{\rho(\bfy) -\sigma(\bfy)}{\rho(\bfy)} 
- \frac{\rho(\bfy) -\sigma(\bfy)}{\sigma(\bfy)} \lambda_{2}
= 1 - \lambda_2 -  \frac{\lambda_2}{\sqrt{\lambda_1}\varepsilon} 
+ O(\varepsilon),
\end{align*}
which is negative for sufficiently small $\varepsilon$.
Thus $\bfg(\bfy)(\bfe_2,\bfe_2) < 0$, 
so the fundamental tensor is not positive definite at $\bfy$.
\end{proof}

\begin{corollary}
\label{fminus}
The $\bfb$ space $(M,S,F^-_{\bfb})$ is an almost Finsler manifold
only when $\dim M =2$.
\end{corollary}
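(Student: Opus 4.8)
The plan is to realize the $\bfb$ space as a bipartite space of the type covered by Theorem~\ref{bipminus} and then apply that theorem fiberwise. As recorded in Example~\ref{comp}, the $\bfb$ manifold $(M, \cup_{x \in M} \R\bfb(x), F^-_{\bfb})$ is a bipartite space with $F^- = \rho - \sigma$ and bipartite factor $\sigma(\bfy) = \rho(\bfb(\pi(\bfy)))\,\rho(\bfy_\perp)$. Fixing a point $x \in M$, the question reduces to one about the Minkowski space $T_xM$, exactly as in the opening reduction of the proof of Theorem~\ref{bipminus}.

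First I would compute the endomorphism $s_x$ associated with the bipartite form and identify its kernel. Writing $\bfb(x) = c\,\bfe_1$ with $c = \rho(\bfb(x)) \in (0,1)$ in a $\rho$-orthonormal basis $\{\bfe_1, \dots, \bfe_n\}$ adapted to $\bfb(x)$, one has $\sigma(\bfy)^2 = c^2\,\rho(\bfy_\perp)^2 = c^2 \sum_{j \geq 2}(y^j)^2$, so $s_x$ is diagonal with eigenvalue $0$ on $\R\bfe_1 = \R\bfb(x)$ and eigenvalue $c^2$ on $\bfb(x)^\perp$. Hence $S_x = \ker(s_x) = \R\bfb(x)$ has dimension exactly $1$; since $\bfb(x) \neq 0$ we have $S_x \neq 0$, and $c^2 > 0$ supplies the positive eigenvalue. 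The hypotheses of Theorem~\ref{bipminus} are therefore met.

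Applying Theorem~\ref{bipminus}, the $\bfb$ space $(M,S,F^-_{\bfb})$ is an almost Finsler manifold if and only if $\dim S_x \in \{\dim M - 1,\ \dim M\}$ for every $x$. Because $\dim S_x = 1$, and because $\dim M \geq 2$ (in dimension $1$ the slit would exhaust $T_xM$, leaving $F^-_{\bfb}$ with empty domain), the only surviving possibility is $\dim M - 1 = 1$, i.e.\ $\dim M = 2$. This is precisely the assertion of the corollary.

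There is no substantive obstacle: the statement is essentially an unpacking of Theorem~\ref{bipminus} for the specific bipartite form attached to a single vector field $\bfb$. The one point that warrants care is the eigenvalue computation verifying that $\ker(s_x)$ is exactly one-dimensional, which reflects the fact that $\bfb$ enters through a \emph{perpendicular} projection---so the zero eigenspace is spanned precisely by $\bfb(x)$ while the positive eigenvalue $c^2$ fills out the complementary $(\dim M - 1)$-dimensional subspace---in contrast to the parallel projection that would arise for an $\bfa$ space.
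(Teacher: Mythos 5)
Your proof is correct and takes essentially the same approach as the paper: both reduce to the eigenvalue structure of the $\bfb$-space bipartite form (kernel $\R\bfb(x)$ of dimension one, together with $\dim M - 1$ equal positive eigenvalues $\rho(\bfb)^2$) and then apply Theorem~\ref{bipminus}. The only cosmetic difference is that for $\dim M = 2$ the paper directly invokes the identification with an $\bfa$ space (with $\bfa \perp \bfb$ and $\rho(\bfa) = \rho(\bfb)$) rather than the ``if'' direction of Theorem~\ref{bipminus}, but that identification is precisely what the theorem's proof supplies in the case $\dim S_x = \dim M - 1$.
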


\begin{proof}
For $\dim M = 2$, 
$(M,S,F^-_{\bfb})$ equals $(M,S,F^-_{\bfa})$ 
where $\langle \bfa,\bfb\rangle = 0$ 
and $\rho(\bfa) = \rho(\bfb)$ (see also Ref.~\cite{ak11}).
Thus $(M,S,F^-_{\bfb})$ is almost Finsler.
For $\dim M > 2$,
$(M,S,F^-_{\bfb})$ is a bipartite space with $\dim M - 1$ 
equal eigenvalues~\cite{krt12},
and hence by Theorem \ref{bipminus} 
it is not an almost Finsler manifold.
\end{proof}

We conjectured above that a bipartite space $(M,S, F^-)$ 
is a Finsler manifold if the slit $S$ is the zero section $M$.
In this case $F^-$ is strictly convex, 
and thus the solid indicatrices are strictly convex, 
but to prove the conjecture one would need to show 
that $F^-$ is strongly convex in the sense of Ref.~\cite{bcs}.

\begin{remark}
A valid proof of Conjecture~\ref{conj1} is given in Ref.~\cite{krt12}
for the case $(M,S,F^+)$ where for each $x$, 
$s_x$ has only one positive eigenvalue of arbitrary multiplicity.
However,
the paper incorrectly asserts that under the same conditions
$(M,S,F^-)$ is also almost Finsler,
a result contradicted by Theorem \ref{bipminus}.
The error stems from an inequality 
in the text below Eq.~(6) of Ref.~\cite{krt12}.
\end{remark}

\subsection{Bipartite indicatrix unions}
\label{Bipartite indicatrix unions}

\begin{definition}
The {\em bipartite indicatrix union} $\widehat{I}$ is the union 
of the indicatrices $I^\pm$ of the almost Finsler manifolds 
with Finsler norms $F^\pm$
with the space $S^f$,
$\widehat{I} = I^+ \cup I^- \cup S^f$.
\end{definition}

The bipartite indicatrix union $\widehat{I}_{\bfa}$ 
of the indicatrices $I_{\bfa}^{\pm}$ of the $\bfa$ spaces 
with almost Finsler norms 
$F_{\bfa}^{\pm}$ given in Eq.~\eqref{amfd}
is isomorphic to the union 
$\widehat{I}_{\bfa} =  I_{\bfa} \cup I_{-\bfa}$
of the indicatrices $I_{\pm \bfa}$ 
of the Randers manifolds 
with Finsler norms 
$F_{\pm \bfa}$ given in Eq.~\eqref{rmfd}~\cite{ak11}.

For $n=2$ and $\langle \bfa, \bfb \rangle = 0$,
the bipartite indicatrix union $\widehat{I}_{\bfa}$ 
of the indicatrices $I_{\bfa}^{\pm}$ of the $\bfa$ spaces 
with almost Finsler norms 
$F_{\bfa}^{\pm}$ given in Eq.~\eqref{amfd}
equals
the bipartite indicatrix union $\widehat{I}_{\bfb}$
of the indicatrices $I_{\bfb}^\pm$ of the $\bfb$ spaces
with almost Finsler norms 
$F_{\bfb}^{\pm}(\bfy)$ given in Eq.~\eqref{bmfd}.
For $n>2$, 
$\widehat{I}_{\bfa}$ and $\widehat{I}_{\bfb}$
are distinct hypersurfaces, as shown by the computations below.

\begin{lemma}
For $n=2$,
the homology groups of $\widehat{I}_{\bfa}$ and $\widehat{I}_{\bfb}$
coincide:
$H_0 = \Z$, $H_1 = \Z^3$.
For $n>2$,
the homology groups of $\widehat{I}_{\bfa}$ are 
$H_0 = \Z$, $H_j = 0$, $j=1,\ldots,n-2$, $H_{n-1} = \Z^3$, 
and $H_j = 0$ for $j \geq n$. 
\end{lemma}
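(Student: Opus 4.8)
The plan is to reduce everything to the single space $\widehat{I}_{\bfa}$ and to compute its homology by a Mayer--Vietoris argument. For $n=2$ the equality $\widehat{I}_{\bfa} = \widehat{I}_{\bfb}$ recorded above (for $\langle \bfa,\bfb\rangle = 0$) exhibits the two spaces as homeomorphic, and homology is a homeomorphism invariant, so it suffices to treat $\widehat{I}_{\bfa}$ for all $n \geq 2$. By the identification already established in this subsection, $\widehat{I}_{\bfa}$ is the union $A \cup B$ of the two Randers indicatrices $A = I_{\bfa}$ and $B = I_{-\bfa}$, each of which is an ellipsoid and hence homeomorphic to the sphere $S^{n-1}$.

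First I would pin down the intersection $A\cap B$. A point lies in both ellipsoids exactly when $F_{\bfa} = F_{-\bfa} = 1$; since $F_{\pm\bfa}(\bfy) = \|\bfy\| \pm \langle \bfa,\bfy\rangle$, the condition $F_{\bfa} = F_{-\bfa}$ forces $\langle \bfa,\bfy\rangle = 0$, after which $F_{\pm\bfa}(\bfy)=1$ reduces to $\|\bfy\|=1$. Hence $A\cap B$ is the unit $\rho$-sphere in the hyperplane $(\R\bfa)^\perp$, which is precisely the fixed-point set $S^f$ and is homeomorphic to $S^{n-2}$. Geometrically, each ellipsoid is cut by this equator into two hemispheres, so $\widehat{I}_{\bfa}$ is two $(n-1)$-spheres glued along a common equatorial $(n-2)$-sphere.

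Next I would run the reduced Mayer--Vietoris sequence for the pair $(A,B)$. Because $A\cap B = S^f$ is a smooth equatorial submanifold of each ellipsoid, it is a deformation retract of a tubular neighborhood in each, so the triad $(\widehat{I}_{\bfa};A,B)$ is excisive and the sequence applies. The reduced homology of $A$ and of $B$ is $\Z$ in degree $n-1$ and zero otherwise, while that of $A\cap B \cong S^{n-2}$ is $\Z$ in degree $n-2$ and zero otherwise. The key observation is that both the term $\widetilde H_k(A)\oplus\widetilde H_k(B)$ and the connecting term $\widetilde H_{k-1}(A\cap B)$ are nonzero only for $k=n-1$: the former because $A,B\simeq S^{n-1}$, the latter because $k-1=n-2$. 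For every $k\neq n-1$ the group $\widetilde H_k(\widehat{I}_{\bfa})$ is squeezed between two zero groups and hence vanishes, while in degree $n-1$ the sequence collapses to the short exact sequence $0\to \Z^2 \to \widetilde H_{n-1}(\widehat{I}_{\bfa}) \to \Z \to 0$. Since $\Z$ is free this splits, giving $\widetilde H_{n-1}(\widehat{I}_{\bfa}) = \Z^3$.

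Passing back to unreduced homology yields $H_0 = \Z$, $H_{n-1} = \Z^3$, and $H_j = 0$ for all other $j$, uniformly in $n\geq 2$. Specializing to $n=2$ gives $H_0=\Z$, $H_1 = \Z^3$ for both $\widehat{I}_{\bfa}$ and $\widehat{I}_{\bfb}$, and for $n>2$ it gives the stated vanishing for $1\le j\le n-2$ and for $j\ge n$ together with $H_{n-1}=\Z^3$. I expect the only genuine content to lie in the geometric step identifying $A\cap B$ with $S^f\cong S^{n-2}$ and in recognizing that two \emph{full} ellipsoids appear; once the pieces are correctly named the homological bookkeeping is routine, and the uniform answer is a pleasant consequence of the coincidence that the two contributions to the Mayer--Vietoris sequence both sit in degree $n-1$. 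A minor check, easily dispatched, is the boundary case $n=2$, where $S^{n-2}=S^0$ is disconnected and one must use $\widetilde H_0(S^0)=\Z$; a quick Euler-characteristic computation $\chi = 2\chi(S^{n-1})-\chi(S^{n-2}) = 1+3(-1)^{n-1}$ provides an independent confirmation of the Betti numbers.
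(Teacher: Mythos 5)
Your proof is correct and takes essentially the same route as the paper: the paper likewise identifies $\widehat{I}_{\bfa}$ with two ellipsoids $S^{n-1}\cup_{S^{n-2}}S^{n-1}$ glued along an $(n-2)$-dimensional ellipsoid and invokes Mayer--Vietoris (and, for $n=2$, the equality $\widehat{I}_{\bfb}=\widehat{I}_{\bfa}$). Your write-up simply fills in the details the paper leaves implicit --- the explicit identification of $I_{\bfa}\cap I_{-\bfa}$ with $S^f\cong S^{n-2}$, excisiveness, the split short exact sequence, the $S^0$ case at $n=2$, and the Euler-characteristic check.
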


\begin{proof}
$\widehat{I}_{\bfa}$ is the union of two $(n-1)$-dimensional ellipsoids 
that intersect in an $(n-2)$-dimensional ellipsoid.
Hence $\widehat{I}_{\bfa}$ is homeomorphic 
to $S^{n-1} \cup_{S^{n-2}} S^{n-1}$.
The corresponding Mayer-Vietoris exact sequence gives the result.
\end{proof}

\begin{lemma} 
$$
H_*(\wh I_{\bfb}) = 
\begin{cases}
(\Z,\Z^3, 0,0,  \dots)\,, & n = 2\,, \\
(\Z,\Z,\Z^2, 0,0,  \dots)\,, & n = 3\,, \\
(\Z,0,\Z^2, \Z,  0, 0  \dots)\,, & n = 4\,, \\
(\Z,0,\Z,\Z, \Z,  0, 0  \dots)\,, & n = 5\,,
\end{cases}
$$
and if $n > 5$, then $H_j(\wh I_{\bfb})= \Z$ 
if and only if $j = 0,2,n-2,n-1$ and is zero otherwise.
\end{lemma}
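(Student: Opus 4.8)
The plan is to identify $\widehat I_{\bfb}$ up to homotopy and then run Mayer--Vietoris, paralleling the $\bfa$-space argument but accounting for the truncation that Corollary \ref{fminus} forces. I would fix coordinates with $\bfb=c\,\bfe_n$ ($0<c<1$), write $\bfy=(\bfy_\perp,t)$, $u=\|\bfy_\perp\|$, $\rho=\sqrt{u^2+t^2}$, so $F^\pm=\rho\pm cu$, the slit is $\{u=0\}$, and $S^f=\{\pm\bfe_n\}$. Both indicatrices are surfaces of revolution about the $t$-axis of explicit profile curves in the $(u,t)$ half-plane, with $SO(n-1)$ acting on the fibre $S^{n-2}$ of perpendicular directions.

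The case $n=2$ is separate: there $F^-_{\bfb}$ is genuinely almost Minkowski, the lemon and apple are circles meeting only at $\pm\bfe_2$, so $\widehat I_{\bfb}\simeq S^1\vee S^1\vee S^1$ and $H_0=\Z,\ H_1=\Z^3$. For $n\ge 3$ the lemon is a genuine indicatrix, $\overline{I^+}\cong S^{n-1}$. For the apple, $F^-$ is not positive definite, so I pass to its truncation: evaluating \eqref{KR} along $I^-$, the $n-2$ transverse components are $g^-_{jj}=F^-(u-c\rho)/(\rho u)$, which vanish precisely on the cone $u=c\rho$ and are negative inside it, while the $(\bfe_1,\bfe_n)$-block stays positive definite. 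Thus the extended slit is this cone, the two polar dimples of the apple are removed, and the truncated apple is the open belt between the maximal-$t$ spheres, $I^-\simeq S^{n-2}$.

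The heart of the proof is the assembly. On the bounding cone the kernel of $\bfg^-$ equals the tangent space to the maximal-$t$ sphere, so the induced metric degenerates along the belt boundary and the belt is glued to the lemon through this degeneracy. Reducing by $SO(n-1)$, the lemon profile and the truncated-apple profile share exactly the two cone-points, and I would argue that the net effect is to attach $V\simeq S^{n-2}$ (belt) to $U\simeq S^{n-1}$ (lemon) along a circle $S^1$ that is null-homotopic in each factor: it bounds a meridian disk in the lemon and, for $n\ge 4$, a disk in the belt. The union of these two disks is the $2$-sphere that will generate $H_2$.

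Feeding $U\cap V\simeq S^1$ into reduced Mayer--Vietoris, the sequence breaks into short exact pieces and yields $\widehat I_{\bfb}\simeq S^2\vee S^{n-2}\vee S^{n-1}$: the connecting map $H_2\xrightarrow{\cong}H_1(S^1)=\Z$ produces the $H_2$-generator, while the null-homotopy of the gluing circle forces $H_1=0$. Reading off $H_j=\Z$ for $j\in\{0,2,n-2,n-1\}$, and adding multiplicities where these degrees coincide, reproduces the listed groups for $n=3,4,5$ and the stated pattern for $n>5$ (the coincidences at small $n$, e.g.\ $n=3$ where the belt is itself a circle, I would check by hand). The main obstacle is exactly the assembly step: rigorously showing that the degenerate belt-boundary attaches to the lemon along a null-homotopic $S^1$ amounts to controlling the extended slit and the limiting behaviour of the truncated apple, which is the very geometry the paper flags as not yet understood in general.
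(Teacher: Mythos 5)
Your proposal is not the paper's argument, and its central step is not merely incomplete but false. The paper performs no truncation whatsoever: by definition (and by the explicit variety description and Fig.~1 accompanying the next lemma), $\wh{I}_{\bfb}$ is the full lemon, the full apple, and $S^f$, i.e.\ the spindle toroid $ST^{n-1}$. After disposing of $n=2$ via $\wh{I}_{\bfb}=\wh{I}_{\bfa}$ (your wedge-of-three-circles argument is an acceptable substitute there), the paper's entire proof rests on one topological identification: a quotient map $f:S^{n-2}\times S^1\to \wh{I}_{\bfb}$ that is injective except for collapsing the two \emph{circle} fibers $\{N\}\times S^1$ and $\{S\}\times S^1$ to the two points of $S^f$. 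The homology is then read off from the K\"unneth theorem and the exact sequences of the good pairs $(S^{n-2}\times S^1,\{N\}\times S^1)$ and $(X_1,\{S\}\times S^1)$, the key point being that the second circle becomes null-homotopic after the first collapse. Your outline never produces, nor even aims at, such a model of $\wh{I}_{\bfb}$.

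The step you yourself flag as the ``heart'' is where the proposal collapses. First, truncating the apple changes the space: the lemma concerns the partial-Finsler indicatrix $I^-=(F^-)^{-1}(1)$, not the locus where $\bfg^-$ is positive definite (your computation of that locus is correct, but it reproves part of the theorem on bipartite $F^-$ spaces rather than anything needed here). Second, there is nothing to assemble: the apple and lemon intersect only at the two poles $\pm\bfe_n$, while your belt is the part of the apple with $u>c\rho$, whose closure is bounded by the two maximal-$t$ spheres at height $|y^n|=\sqrt{B}>1$ with $B=1/(1-c^2)$, whereas the lemon lies entirely in $|y^n|\le 1$. Hence the closed belt is disjoint from $I^+\cup S^f$, there is no circle $U\cap V$ to feed into Mayer--Vietoris, and taken literally your space $I^+\cup(\mathrm{belt})\cup S^f$ is disconnected, giving $H_0=\Z^2$ and contradicting the first entry of the formula you are proving; the null-homotopic gluing circle is reverse-engineered from the target answer $S^2\vee S^{n-2}\vee S^{n-1}$, not derived from the geometry (degeneracy of $\bfg^-$ along the cone does not glue anything to anything). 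Finally, note that your untruncated starting point --- the $SO(n-1)$ orbit decomposition, with a one-dimensional profile and $S^{n-2}$ fibers collapsed over the two poles --- is the \emph{transpose} of the product model used in the paper, and the two agree only for $n\le 3$: pursued faithfully, it exhibits the full union as two copies of the suspension $\Sigma S^{n-2}$ glued at the two poles, with homology $(\Z,\Z,0,\dots,0,\Z^2)$ concentrated in degrees $0,1,n-1$, which matches the displayed groups only for $n\le 3$. Reconciling this with the statement requires establishing the paper's quotient map $f$ with its circle fibers; that identification is the entire content of the paper's proof, and it is exactly what your proposal lacks.
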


\begin{proof}
For $n = 2$, $\widehat{I}_{\bfb} = \widehat{I}_{\bfa}$, so we assume $n>2$.

If $A$ is a nonempty closed subset of a space $X$ 
that is a deformation retract of a neighborhood in $X$, 
then $(X,A)$ is called a {\em good pair}
and there is an exact sequence~\cite{ah}
$$
\dots \to  H_2(X/A) \to H_1(A )\to H_1(X) \to  H_1(X/A) \to H_0(A) \to H_0(X).
$$

$\widehat{I}_{\bfb}$ is a surface of revolution 
obtained by rotating an $(n-2)$-dimensional ellipsoid 
about an intersecting line that does not contain 
the center of the ellipsoid (see Fig.~1).
Let $N = (0, \dots, 0, 1), S = (0, \dots, 0 , -1) \in S^{n-2}$.
Then there is continuous surjective map 
$f : S^{n-2} \times S^1 \to \widehat{I}_{\bfb}$ 
so that $f^{-1}(\{x\}) = \{N \} \times S^1$ 
and $f^{-1}(\{y\}) = \{S \} \times S^1$.
Note that $f$ is a quotient map since the domain is compact 
and the codomain is Hausdorff. 
Thus if $X_1 = (S^{n-2} \times S^1)/ (\{N \} \times S^1)$ 
then $\wh{I}_{\bfb} \cong X_1/ (\{S \} \times S^1)$.  

Recall the K\"unneth theorem $H_*(X \times S^1) = H_*(X) \oplus H_{*-1}(X)$.
Then in the exact sequence of the good pair 
$(S^{n-2} \times S^1, \{N \} \times S^1)$, 
the map $H_1(\{N \} \times S^1) \to H_1(S^{n-2} \times S^1)$ 
maps isomorphically to the summand $H_{1-1}(S^{n-2})$.
Thus the exact sequence of the good pair shows 
$$
 H_j(X_1)  = 
\begin{cases}
H_1(S^{n-2}), & j = 1, \\
H_j(S^{n-2}) \oplus H_{j-1}(S^{n-2}),  & j \not = 1.
\end{cases}
$$
The inclusion $ \{S \} \times S^1 \to X_1$ is null homotopic 
so the exact sequence of the good pair $(X_1, \{S \} \times S^1) )$ 
shows that
$$
H_j(\wh I_{\bfb}) = 
\begin{cases}
 H_2(X_1) \oplus \Z, & j = 2, \\
 H_j(X_1), & j \not = 2. 
\end{cases}
$$
Thus
$$
H_j(\wh I_{\bfb}) = 
\begin{cases}
H_1(S^{n-2}),    &  j = 1, \\
H_2(S^{n-2}) \oplus H_1(S^{n-2}) \oplus \Z, & j = 2, \\
H_j(S^{n-2}) \oplus H_{j-1}(S^{n-2}),  & j > 2.
\end{cases}
$$
The result follows.
\end{proof}

Note that in all cases the total Betti number 
(the sum of the ranks of the homology) 
of $\wh I_{\bfa}$ and $\wh I_{\bfb}$  is four. 
Perhaps this is further evidence for the complementary nature 
of $\bfa$ space and $\bfb$ space.

A partial classification of isomorphisms 
among various bipartite indicatrix unions
is given in Table 1 of \cite{krt12}. 

\begin{figure}[h!] 
\begin{center}
\includegraphics[width=2in]{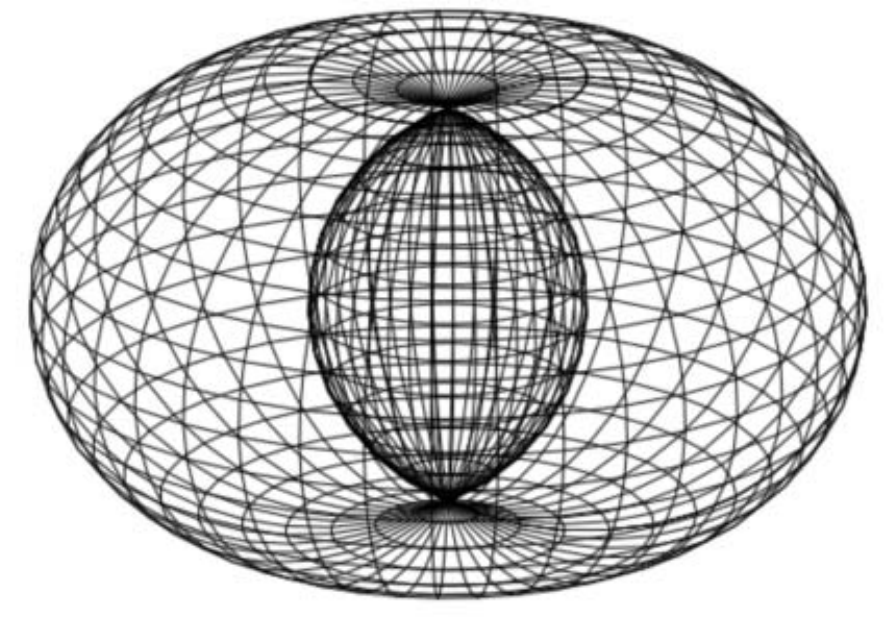}
\includegraphics[width=2.3in]{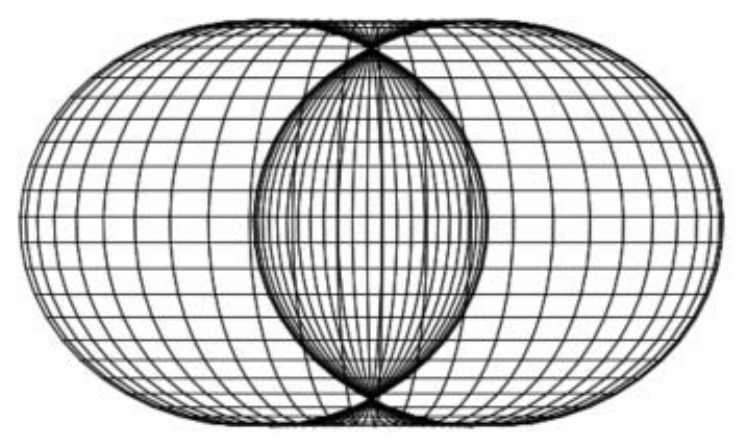}
\caption{The bipartite indicatrix union $\widehat{I}_{\bfb}$ for $n=3$.}
\end{center}
\label{fig1}
\end{figure}

\begin{lemma}
The bipartite indicatrix union $\widehat{I}_{\bfb}$
of the indicatrices $I_{\bfb}^\pm$ of the $\bfb$ spaces
is an $(n-1)$-dimensional spindle toroid $ST^{n-1}$.
\end{lemma}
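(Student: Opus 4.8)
The plan is to realize $\widehat{I}_{\bfb}$ explicitly as a surface of revolution and to match its profile to that of a spindle toroid. As in the proof of Theorem~\ref{bipminus}, it suffices to argue in a single tangent space $V\cong\R^n$. Choosing a $\rho$-orthonormal basis with $\bfb=c\,\bfe_n$ and $0<c<1$, write $\bfy=\bfy_{\perp}+\bfy_{\|}$ with $z=y^n$ and $r=\rho(\bfy_{\perp})=\sqrt{(y^1)^2+\cdots+(y^{n-1})^2}$, so that $F_{\bfb}^{\pm}(\bfy)=\sqrt{r^2+z^2}\pm c\,r$. Since $F_{\bfb}^{\pm}$ depends on $\bfy$ only through the pair $(r,z)$, the set $\widehat{I}_{\bfb}$ is invariant under the group $O(n-1)$ rotating the hyperplane $(\R\bfb)^{\perp}$; its orbits are the spheres $\{\rho(\bfy_{\perp})=r\}\cong S^{n-2}$ for $r>0$, each collapsing to a single point of the axis $\R\bfb$. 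Thus $\widehat{I}_{\bfb}$ is recovered by revolving its \emph{profile} — the image of $\widehat{I}_{\bfb}$ in the half-plane $\{(r,z)\mid r\geq 0\}$ — about the $\bfb$-axis.

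Next I would identify this profile explicitly. Squaring the defining equations $F_{\bfb}^{+}=1$ and $F_{\bfb}^{-}=1$ yields the lemon arc $z^2=1-2cr-(1-c^2)r^2$ on $r\in[0,1/(1+c)]$ and the apple arc $z^2=1+2cr-(1-c^2)r^2$ on $r\in[0,1/(1-c)]$. Each is a smooth arc from $(0,1)$ to $(0,-1)$ that meets the rotation axis $r=0$ at exactly the two points $\pm\bfe_n$, which are precisely the points of $S^f$; since $1/(1+c)<1/(1-c)$, the lemon arc lies strictly inside the apple arc and the two meet only at $\pm\bfe_n$. Hence $I^{+}\cup I^{-}\cup S^f$ projects to a single closed profile curve touching the axis at exactly the two poles. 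Writing $z=1+\zeta$ near $\bfe_n$ gives $\zeta=\mp c\,r+O(r^2)$ for $I^{\pm}_{\bfb}$, so the two arcs leave each pole with nonzero, opposite slopes in $r$; this transverse crossing of the axis is exactly the feature that distinguishes a spindle profile from a ring or horn profile.

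Revolving this profile about the $\bfb$-axis then gives the identification. Over each interior profile point the fiber is an $S^{n-2}$, while over the two poles it collapses, so the lemon $I^{+}$ and the apple $I^{-}$ are each an unreduced suspension $\Sigma S^{n-2}\cong S^{n-1}$ with conical singular points at $\pm\bfe_n$, and they are joined exactly along the two pinch points $S^f=\{\pm\bfe_n\}$. For $n=3$ this is the classical spindle torus — lemon and apple meeting at two pinch points — and in general it is by definition the $(n-1)$-dimensional spindle toroid $ST^{n-1}$, which identifies $\widehat{I}_{\bfb}$ with $ST^{n-1}$. I expect the step needing the most care to be the analysis at the poles: one must verify that the lemon and apple meet there in the expected opposed-cone configuration and share no other points, so that exactly two pinch points arise and the surface is a spindle toroid rather than a ring or horn toroid.
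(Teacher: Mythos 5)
Your proposal is correct and takes essentially the same route as the paper: the paper likewise passes to $\rho$-orthonormal coordinates with $\bfb$ along the last axis and exhibits $\widehat{I}_{\bfb}$ as the union of the two quadric surfaces of revolution $\frac{(\|\bfy\|\pm bB)^2}{B^2}+\frac{(y^n)^2}{B}=1$ with $B=1/(1-b^2)$, which are exactly your profile equations after completing the square. Your extra analysis at the poles (the opposite-slope transverse crossings at $S^f$ producing precisely two pinch points) is detail the paper leaves implicit, and it is consistent with the paper's displayed varieties.
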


The algebraic equations for the surface $ST^{n-1}$ 
can be expressed in convenient local coordinates
chosen to diagonalize and scale the Riemannian metric 
to the identity $r = I_n$
and to be oriented so that $\bfb = (0,\ldots,0,\pm b)$
with $0<b<1$.
Denoting these $n$ local coordinates by $(\bfy, y^n)\in \R^n$
so that $\rho^2 = \|\bfy\|^2 + (y^n)^2$,
the hypersurface $ST^{n-1}$ is the union of the two varieties given by 
$$
\frac{(\|\bfy\| \pm b B)^2}{B^2} + \frac{(y^n)^2}{B} = 1,
$$
where $B = 1/(1-b^2)$.

\begin{example}
For $n=3$, 
the bipartite indicatrix union $\widehat{I}_{\bfb}$
is the $2$-dimensional spindle toroid $ST^2$.
The surface $ST^2$ is displayed from two perspectives in Fig.~1. 
It is a degenerate Dupin cyclide~\cite{pcd}
formed from the union of the lemon $I_{\bfb}^+$ 
and the apple $I_{\bfb}^-$. 
\end{example}

\begin{example}
For $n=2$,
the bipartite indicatrix union $\widehat{I}_{\bfb}$
is the intersection of $ST^2$ with a plane through the fixed points $S^f$.
\end{example}

\section{Characteristic tensors}
\label{Characteristic tensors}

Suppose an almost Finsler manifold $(M,S,\ol F)$ is specified.
Then a set of tensor differential equations
for a Finsler norm $F$ with solution $F = \overline{F}$ 
can be obtained by taking $\bfy$ derivatives 
of the equation $F(\bfy) - \overline{F}(\bfy)=0$.
These differential equations therefore characterize
the almost Finsler manifold $(M,S,\overline{F})$.
Since certain $\bfy$ derivatives of $F$ 
are related to geometric quantities such as the fundamental tensor $\bfg$,
it is natural to seek a tensor combination of the differential equations
that can be expressed entirely in terms of geometric quantities.
Any such combination of geometric quantities
then can be interpreted as a characteristic tensor 
that vanishes for the prescribed almost Finsler manifold.
This interpretation in terms of characteristic tensors
has been achieved for Riemannian geometry
(Deicke's Theorem~\cite{ad})
and for Randers spaces 
(Matsumoto-H\=oj\=o Theorem~\cite{mh}).

Here,
we obtain a characteristic tensor that vanishes for bipartite geometries
that are almost Finsler manifolds
and consider several limiting cases,
including Riemann, Randers, $\bfa$, and $\bfb$ manifolds.

\subsection{Proof of Theorem \ref{bipthm}
}

Recall (see Eq.~\eqref{pb}) 
that for any partial Finsler manifold $(M, S, F)$ 
we have a Euclidean vector bundle 
$\pi_1: \pi^*|TM \to TM \backslash S$ with fiber $T_xM$
where $x = \pi(\bfy)$ and with metric $\bfg(\bfy)$.
We consider $(p,q)$-tensor fields with values in $\pi^*|TM$.
We work in local coordinates: 
for a point $x \in M$, 
we choose a neighborhood $U \subset M$ 
so that there are linearly independent sections 
$\{\bfe_1, \dots, \bfe_n\}$ of $TM|_U \to U$.
This gives linearly independent sections 
(denoted $\{\pi|^*\bfe_1, \dots, \pi|^*\bfe_n\}$) 
of $\pi_1$ restricted to $\pi_1^{-1}(U)$.
Then,
given real valued functions $y^1, \dots, y^n: U \to \R$, 
we obtain $\bfy(x') = \sum y^j\bfe_j(x')$ for $x' \in U$.

For the following discussion of bipartite spaces,
it is convenient to minimize clutter from signs
by abbreviating $F^\pm = F = \rho + \Delta$
with $\Delta = \pm \sigma$.
It is also convenient to work with $\bfy$ derivatives at each $T_xM$
taken in local coordinates $y^j$, $j = 1, \ldots, n$.
Derivatives are denoted by subscripts,
e.g., $F_j = \partial F/\partial y^j$. 

\begin{lemma}
A bipartite Finsler norm $F$ satisfies
the differential equation
\begin{equation}
(F^2)_{jkl} - 2 \Delta F_{jkl} - 2 F \Delta_{jkl}
-2 \sum_{(jkl)} \left(\Delta_j F_{kl} + \Delta_{jk} F_l \right) = 0, 
\label{diffeq}
\end{equation}
where the summation is over cyclic permutations of $jkl$.
\end{lemma}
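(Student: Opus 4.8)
The plan is to exploit the defining relation $F = \rho + \Delta$ together with the fact that the two squared quantities appearing in a bipartite norm are \emph{quadratic forms} in $\bfy$: by construction $\rho^2 = \bfr(\bfy,\bfy)$ and $\Delta^2 = \sigma^2 = \bfs(\bfy,\bfy)$. Since any homogeneous quadratic polynomial has identically vanishing third derivatives, we have $(\rho^2)_{jkl} = 0$ and $(\Delta^2)_{jkl} = 0$. This is the crux of the argument: it is precisely what lets us avoid the four derivatives of $F$ that the quartic structure would naively suggest, and it eliminates the two nonpolynomial pieces before any differentiation of $F$ itself is attempted.

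First I would rewrite the defining relation as $\rho = F - \Delta$ and square it to obtain
$$\rho^2 = F^2 - 2F\Delta + \Delta^2.$$
I would then differentiate this identity three times with respect to the fibre coordinates $y^j, y^k, y^l$. By the quadratic-form observation the terms $(\rho^2)_{jkl}$ and $(\Delta^2)_{jkl}$ both drop out, leaving the compact identity
$$0 = (F^2)_{jkl} - 2(F\Delta)_{jkl}.$$

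The one remaining computation is the Leibniz expansion of the third derivative of the product $F\Delta$. For two smooth functions $u,v$ the triple product rule reads
$$(uv)_{jkl} = u_{jkl}\,v + u\,v_{jkl} + \sum_{(jkl)}\bigl(u_{jk}v_l + u_j v_{kl}\bigr),$$
where the cyclic sum packages the three mixed terms of each type, using that partial derivatives commute. Substituting $u = F$, $v = \Delta$ and inserting the result into $(F^2)_{jkl} = 2(F\Delta)_{jkl}$ reproduces Eq.~\eqref{diffeq} directly.

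Since the only genuine ingredient is the elementary fact that third derivatives of quadratic polynomials vanish, there is no analytic obstacle here; the proof is short. The single point that demands care is purely combinatorial bookkeeping: one must check that the symmetrized Leibniz terms repackage into the cyclic sum $\sum_{(jkl)}\bigl(\Delta_j F_{kl} + \Delta_{jk} F_l\bigr)$ as written, noting that this is the same as the equally valid but differently ordered expression $\sum_{(jkl)}\bigl(F_{jk}\Delta_l + F_j \Delta_{kl}\bigr)$ produced by the product rule, the two forms differing only by a relabeling within each three-term cyclic orbit.
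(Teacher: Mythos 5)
Your proposal is correct and takes essentially the same route as the paper: squaring $F-\Delta=\rho$ to see that $F^2 - 2\Delta F = \rho^2 - \Delta^2$ is a quadratic polynomial in $\bfy$ (so its third fibre derivatives vanish) and then expanding $(F\Delta)_{jkl}$ by the triple Leibniz rule. The cyclic-sum relabeling you flag at the end is handled correctly, so there is nothing to add.
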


This follows by squaring the expression $F-\Delta = \rho$,
which reveals that $F^2 - 2 \Delta F$ is a second-order polynomial in $\bfy$.
Taking three derivatives then generates the result. 

The Finsler norms $F$ and differences $\Delta = F-\rho$,
together with their derivatives,
are geometric quantities associated with bipartite spaces.
The following definitions are standard in the literature~\cite{bcs}.

\begin{definition}
The {\em Hilbert form} $\bfp$ 
is a $(0,1)$-tensor with local components 
$p_j = F_j$ and
the {\em angular metric} $\bfh$ 
is a symmetric $(0,2)$-tensor  with local components 
$h_{jk} = F F_{jk}$,
The {\em Cartan torsion} $\bfC$ from Example \ref{metric_and_Cartan}
is a totally symmetric $(0,3)$-tensor  with local components
$C_{jkl} = \tfrac 14 (F^2)_{jkl}$.
\label{def1}
\end{definition}

\begin{lemma}
The differential equation (\ref{diffeq})
can be expressed in terms of geometric quantities as 
\begin{equation}
2(F-\Delta) C_{jkl} 
+ \sum_{(jkl)}
\left(\frac{\Delta}{F} ~ p_j - \Delta_j\right)
\left(h_{kl} - \frac{F^2}{\Delta} \Delta_{kl}\right)
=0.
\label{geomeq}
\end{equation}
\label{geomlem}
\end{lemma}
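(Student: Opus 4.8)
The plan is to convert the purely analytic identity (\ref{diffeq}) into the geometric form (\ref{geomeq}) by substituting the definitions in Definition \ref{def1} and eliminating the only term that is not already geometric, namely the third derivative $F_{jkl}$. First I would record the elementary consequences of the definitions: $F_j = p_j$, $F_{jk} = h_{jk}/F$, and $(F^2)_{jkl} = 4 C_{jkl}$. The terms $\Delta_j F_{kl}$ and $\Delta_{jk} F_l$ in (\ref{diffeq}) then become $\Delta_j h_{kl}/F$ and $\Delta_{jk} p_l$ immediately, so the entire content of the lemma reduces to expressing $F_{jkl}$ geometrically and reconciling the terms built solely from $\Delta$.

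To handle $F_{jkl}$, I would apply the Leibniz rule to $(F^2)_{jkl} = (2FF_j)_{kl}$, or equivalently differentiate $(F^2)_{jk} = 2(p_j p_k + h_{jk})$ once more, which gives
\begin{equation*}
2 C_{jkl} = F F_{jkl} + \tfrac{1}{F}\sum_{(jkl)} p_j h_{kl}.
\end{equation*}
Solving for $F_{jkl}$ and inserting it into the $-2\Delta F_{jkl}$ term of (\ref{diffeq}) produces the Cartan coefficient $4(F-\Delta)C_{jkl}/F$ together with a $p_j h_{kl}$ contribution. After multiplying the resulting equation through by $F/2$, every term except those built solely from $\Delta$ and its derivatives matches (\ref{geomeq}) term by term, using only the observation that the cyclic sums $\sum_{(jkl)} \Delta_{jk}p_l$ and $\sum_{(jkl)} p_j \Delta_{kl}$ are equal (a relabeling of the same three summands) and that $h_{jk}$ and $\Delta_{jk}$ are symmetric.

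The one genuinely new input, and the step I expect to be the crux, is the identity
\begin{equation*}
\Delta\,\Delta_{jkl} + \sum_{(jkl)} \Delta_j \Delta_{kl} = 0,
\end{equation*}
which is what allows the leftover term $-F^2\Delta_{jkl}$ to be traded for the term $\tfrac{F^2}{\Delta}\sum_{(jkl)}\Delta_j\Delta_{kl}$ appearing in the expansion of (\ref{geomeq}). I would obtain this from the defining fact that $\Delta^2 = \sigma^2 = \bfs(\bfy,\bfy)$ is a homogeneous quadratic in the fiber coordinates $\bfy$, so its third $\bfy$-derivative vanishes identically; expanding $(\Delta^2)_{jkl}=0$ by the Leibniz rule and collecting the cyclic sums yields the displayed relation. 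The main obstacle is therefore not any single hard estimate but the careful bookkeeping of the cyclic permutation sums so that each distinct tensor structure is paired with the correct coefficient; the key simplifying idea is that the quartic nature of $F$ is tamed by the quadratic nature of $\Delta^2$, which collapses all third-derivative data of $\Delta$ onto lower-order data.
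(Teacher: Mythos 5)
Your proposal is correct and follows essentially the same route as the paper: the paper's proof likewise reduces everything to the identity $\Delta_{jkl} = -\tfrac{1}{\Delta}\sum_{(jkl)}\Delta_j\Delta_{kl}$ (which, as you note, comes from the quadratic nature of $\Delta^2 = \bfs(\bfy,\bfy)$, i.e.\ $(\Delta^2)_{jkl}=0$) and then obtains Eq.~\eqref{geomeq} from Eq.~\eqref{diffeq} by substitution using the definitions of $C_{jkl}$, $h_{jk}$, and $p_j$. Your write-up simply makes explicit the bookkeeping (the expansion of $F_{jkl}$, the overall factor of $F/2$, and the equality of the cyclic sums $\sum_{(jkl)}\Delta_{jk}p_l = \sum_{(jkl)}p_j\Delta_{kl}$) that the paper leaves to the reader.
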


Taking successive derivatives of $\Delta$ reveals that
the third derivative $\Delta_{jkl}$ is given by
$\Delta_{jkl} = -\frac{1}{\Delta}\sum_{(jkl)} \Delta_j \Delta_{kl}$.
Lemma \ref{geomlem} then follows from Eq.~(\ref{diffeq}) by substitution.

\begin{definition}
The {\em fundamental tensor} is the symmetric $(0,2)$-tensor
$\bfg= \bfh + \bfp \otimes \bfp$ mentioned in Example \ref{metric_and_Cartan}.
Viewing its local expression $g_{jk}$ as a matrix,
the matrix inverse $g^{jk}$ is the {\em inverse fundamental tensor}.
The {\em mean Cartan torsion} $\bfI$
is a 1-form with local components 
$I_j = g^{kl} C_{jkl}= \partial_{y^j}\ln(\sqrt{\det \bfg})$.
\label{def2}
\end{definition}

\begin{lemma}
The mean Cartan torsion $I_j$ for bipartite spaces is determined by
$$
2(F-\Delta) I_j 
+\kappa
\left(\frac{\Delta}{F} ~ p_j - \Delta_j\right)
+ \frac{2F^2}{\Delta} g^{kl}\Delta_k \Delta_{lj} = 0,
$$
where 
$\kappa = n+1 -\frac{F^2}{\Delta} g^{kl}\Delta_{kl}$.
\label{mctlem}
\end{lemma}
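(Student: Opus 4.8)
The plan is to contract the geometric identity of Lemma~\ref{geomlem} with the inverse fundamental tensor $g^{kl}$, since this immediately turns the Cartan term $2(F-\Delta)C_{jkl}$ into $2(F-\Delta)I_j$ by the definition $I_j = g^{kl}C_{jkl}$. The preparatory step is to assemble the homogeneity relations that govern how the remaining factors contract. Because $F$ is $1$-homogeneous, Euler's theorem gives $y^jF_j = F$ and $y^kF_{jk}=0$, and because $\Delta = \pm\sigma$ is likewise $1$-homogeneous it gives $y^j\Delta_j = \Delta$ and $y^k\Delta_{jk}=0$. Combined with $p_j=F_j$, $h_{jk}=FF_{jk}$, and $g_{jk}=h_{jk}+p_jp_k$, these yield $y^kh_{jk}=0$ and $g_{jk}y^k = Fp_j$, whence the crucial raising identity $g^{jk}p_k = y^j/F$.

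From this toolkit one reads off the scalar contractions needed below: $g^{kl}p_kp_l = 1$, $g^{kl}h_{kl} = n-1$ (using $g^{kl}g_{kl}=n$), $g^{kl}\Delta_kp_l = \Delta/F$, and, writing $h_{lj}=g_{lj}-p_lp_j$, the one-form contraction $g^{kl}\Delta_kh_{lj} = \Delta_j - (\Delta/F)p_j$. Next I would apply $g^{kl}$ to the cyclic sum $\sum_{(jkl)}$ one permutation at a time. In the diagonal permutation both contracted indices sit on the factor $h_{kl}-(F^2/\Delta)\Delta_{kl}$, producing $\bigl((\Delta/F)p_j-\Delta_j\bigr)\bigl((n-1)-(F^2/\Delta)g^{kl}\Delta_{kl}\bigr)$; recognizing the second bracket as $\kappa-2$ is exactly what introduces the definition $\kappa = n+1-(F^2/\Delta)g^{kl}\Delta_{kl}$. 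The two remaining cyclic permutations are equal by the symmetry of $g^{kl}$, and each collapses using the identities above: the two $p$-pieces vanish by $y^kh_{jk}=0$ and $y^k\Delta_{jk}=0$, the $\Delta_k$-$h$ piece reproduces $(\Delta/F)p_j-\Delta_j$, and the $\Delta_k$-$\Delta_{lj}$ piece survives as $(F^2/\Delta)g^{kl}\Delta_k\Delta_{lj}$.

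Summing the three permutations, the $(\kappa-2)$ contribution and the two copies of $(\Delta/F)p_j-\Delta_j$ combine to $\kappa\bigl((\Delta/F)p_j-\Delta_j\bigr)$, while the surviving second-derivative pieces give $2(F^2/\Delta)g^{kl}\Delta_k\Delta_{lj}$. Together with $2(F-\Delta)I_j$ from the Cartan term, this is precisely the asserted equation.

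The main obstacle is organizational rather than conceptual: keeping the cyclic bookkeeping straight, so that exactly one of the three permutations yields the $(\kappa-2)$ factor while the other two coincide and telescope into $\kappa\bigl((\Delta/F)p_j-\Delta_j\bigr)$. The only real subtlety is making sure the raising identity $g^{jk}p_k=y^j/F$ and the two Euler vanishing relations are applied to the correct index of each factor; once those are in place the contraction is short and essentially forced.
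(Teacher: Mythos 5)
Your proposal is correct and follows essentially the same route as the paper's own proof: the paper likewise contracts Eq.~\eqref{geomeq} with the inverse fundamental tensor and invokes the identical set of Euler-theorem identities ($g^{kl}p_kp_l=1$, $g^{kl}h_{kl}=n-1$, $g^{kl}p_k h_{lj}=0$, $g^{kl}p_k\Delta_{lj}=0$, $g^{kl}\Delta_k p_l=\Delta/F$, $g^{kl}\Delta_k h_{lj}=\Delta_j-(\Delta/F)p_j$) before substituting back. Your permutation-by-permutation bookkeeping, including the identification of the diagonal term with $\kappa-2$ and the doubling of the two off-diagonal terms, is exactly the substitution the paper leaves implicit.
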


This result can be confirmed by contracting Eq.~(\ref{geomeq})
with $g^{jk}$
and applying homogeneity,
$F(\lambda\bfy) =\lambda F(\bfy)$,
using Euler's Theorem~\cite{bcs}.
Specifically,
since $y^j F_{jk} = 0$ by 1-homogeneity of $F$,
it follows from Definition \ref{def1}
that $y^j h_{jk} = 0$
and hence that $y^j = F g^{jk} p_k$,
so $g^{jk} p_j h_{kl} = 0$.
Also,
1-homogeneity of $F$ implies $y^j p_j = F$
and hence $g^{jk} p_j p_k = 1$.
Definition \ref{def2} then yields 
$g^{jk} h_{kj} = n-1$.
Since $\Delta$ is also 1-homogeneous,
it follows that
$g^{jk} p_j \Delta_{kl} = 0$.
Finally, 
$g^{jk} p_j \Delta_k = \Delta/F$,
which implies
$g^{jk} h_{kl} \Delta_j = \Delta_l - \Delta p_l/F$.
Substitution of these results into the contraction of Eq.~(\ref{geomeq})
verifies Lemma \ref{mctlem}.

Theorem \ref{bipthm} follows directly 
by combining Lemma \ref{geomlem} with Lemma \ref{mctlem}.
The characteristic tensor $S_{jkl}$ is the local representation
of a totally symmetric $(0,3)$-tensor $\bfS$,
the {\em bipartite tensor}.

\begin{example}
For Riemannian geometry
$\Delta = 0$,
and the characteristic tensor $\bfS$
reduces to the Cartan torsion,
$\bfS \vert_{\Delta = 0} = \bfC$,
consistent with Deicke's Theorem~\cite{ad}.
\end{example}

\begin{example}
For the Randers manifolds of Example \ref{abmfd},
adopt convenient local coordinates
chosen to scale the Riemannian metric to $r = I_n$
and to be oriented so that $\bfa = (0,\ldots,0,\pm a)$
with $0<a<1$.
Then $\Delta = \pm a y^n$ and so $\Delta_{jk}= 0$,
and the bipartite tensor $\bfS$ reduces to the {\em Matsumoto tensor} $\bfM$,
$\bfS \vert_{\Delta_{jk}=0} = \bfM$,
given by 
$\bfM = \bfC - \frac{1}{n+1} \sum_{(jkl)} \bfI \otimes \bfh$.
This is consistent with the Matsumoto-H\=oj\=o Theorem
for Finsler manifolds~\cite{mh}.
\label{exrndel}
\end{example}

\begin{example}
For the almost Finsler $\bfa$ manifolds of Example \ref{abmfd}
and in the same convenient local coordinates as Example \ref{exrndel},
$\Delta = \pm \vert a \vert \vert y^n \vert$.
Away from the slit $S$,
we have $\Delta_{jk}= 0$ 
and the bipartite tensor $\bfS$ again reduces to the Matsumoto tensor $\bfM$,
$\bfS \vert_{\Delta_{jk}=0} = \bfM$.
We thus see that
the introduction of almost Finsler manifolds
allows additional manifolds solving the Matsumoto condition $\bfM =0$. 
\end{example}

\begin{remark}
By construction,
the $\bfg$ trace of $\bfS$ vanishes,
$g^{jk} S_{jkl} = 0$.
This encompasses the vanishing 
of the $\bfg$ trace of the Matsumoto tensor $\bfM$,
$g^{jk} M_{jkl} = 0$.
\label{strace}
\end{remark}

\subsection{Proof of Theorem \ref{bthm}
}

Next,
we determine the characteristic tensor for the $\bfb$ spaces,
which are special bipartite spaces.
The derivation takes advantage of projection operators 
specific to the $\bfb$ spaces
to evaluate key parts of the expression in Theorem \ref{bipthm}.

The decomposition of $\bfy$ in Eq.~\eqref{vecdecomp} 
can be understood as a decomposition 
of the identity map $\bfI:~ V \to V$ 
into projectors parallel and perpendicular to $\bfb$,
$\bfI = \bfP_\parallel + \bfP_\perp$,
where for nonzero $\bfb$ the parallel projector can be written as
$\bfP_\parallel = \bfb \otimes l_1(\bfb)/\|\bfb\|^2$.
In local coordinates,
we denote the components of the perpendicular projector $\bfP_\perp$
as $P^j{}_k = \delta^j{}_k - b^j b_k/\|\bfb\|^2$,
and we write $P_{jk}= r_{jl} P^l{}_k$,
$P^{jk}= P^j{}_l r^{lk}$,
and $(Py)_j = P_{jk} y^k$.

\begin{lemma}
The perpendicular projector $P^j{}_k$ satisfies 
$P^j{}_k P^k{}_l = P^j{}_l$.
Its trace is 
${\rm tr}({\bfP_\perp}) = P^j{}_k \delta^k{}_j = P_{jk} r^{jk} = n-1$,
and it annihilates $\bfb$, $P^j{}_k b^k = 0$.
\end{lemma}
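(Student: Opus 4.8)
The plan is to reduce all three assertions to the single algebraic input that the squared Riemannian length of $\bfb$ is recovered by contracting its raised and lowered components, namely $b_k b^k = r_{kl} b^k b^l = \|\bfb\|^2$ with $b_k = r_{kl} b^l$. Conceptually, the identity $\bfP_\perp = \bfI - \bfP_\parallel$ with $\bfP_\parallel = \bfb \otimes l_1(\bfb)/\|\bfb\|^2$ the orthogonal projection onto the line $\R\bfb$ already makes the lemma transparent: $\bfP_\parallel$ is the standard rank-one orthogonal projector, so it fixes $\bfb$, is idempotent, and has trace $1$, and each claimed property of $\bfP_\perp$ is simply the complementary statement. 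I would nonetheless verify everything directly in the stated local coordinates, since the subsequent $\bfb$-space computations are carried out componentwise.

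First I would establish annihilation, as it is the cleanest and feeds the other two. Writing $P^j{}_k b^k = b^j - b^j(b_k b^k)/\|\bfb\|^2$ and substituting $b_k b^k = \|\bfb\|^2$ collapses the right-hand side to $b^j - b^j = 0$.

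Next I would treat idempotency by expanding the product $P^j{}_k P^k{}_l$ into four terms. These are the identity piece $\delta^j{}_l$; two copies of $-b^j b_l/\|\bfb\|^2$ obtained by contracting the Kronecker deltas against a single projector correction; and a final quadratic term carrying the factor $b_k b^k/\|\bfb\|^4$, which by $b_k b^k = \|\bfb\|^2$ becomes $+b^j b_l/\|\bfb\|^2$. Summing, the three $b^j b_l$ contributions combine with coefficients $-1-1+1 = -1$, leaving $\delta^j{}_l - b^j b_l/\|\bfb\|^2 = P^j{}_l$. Equivalently, idempotency is immediate from annihilation together with the fact that $\bfP_\parallel$ is idempotent.

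Finally, for the trace I would set $l=j$ and sum: $P^j{}_j = \delta^j{}_j - b^j b_j/\|\bfb\|^2 = n - \|\bfb\|^2/\|\bfb\|^2 = n-1$. The alternative expression agrees because $P_{jk} r^{jk} = r_{jl} P^l{}_k r^{jk} = (r_{jl} r^{jk}) P^l{}_k = \delta^k{}_l P^l{}_k = P^k{}_k$, using that $r^{jk}$ and $r_{jl}$ are inverse matrices. The computation involves no genuine obstacle; the only point requiring care is the index bookkeeping and the consistent use of the metric contractions $b_k = r_{kl} b^l$ and $r^{jk} r_{jl} = \delta^k{}_l$.
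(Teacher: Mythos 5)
Your proof is correct, and it is exactly the routine componentwise verification the paper has in mind: the paper in fact states this lemma without proof, treating it as an elementary consequence of $b_k b^k = \|\bfb\|^2$ and $r^{jk}r_{jl} = \delta^k{}_l$, which is precisely what your computation supplies.
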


Many interesting properties of the $\bfb$ spaces
can be traced to features of the perpendicular projector $\bfP_\perp$.

\begin{lemma}
The $\bfb$ spaces are bipartite spaces with bipartite form given as 
$s_{jk} = \|\bfb\|^2 (\bfP_\perp)_{jk} = \|\bfb\|^2 P_{jk}$.
\end{lemma}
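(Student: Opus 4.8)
The plan is to verify directly that the coordinate expression $s_{jk} = \|\bfb\|^2 P_{jk}$ defines a legitimate bipartite form whose bipartite factor $\sigma$ reproduces the $\bfb$ norm of Eq.~\eqref{bmfd}. First I would compute the bipartite factor associated with this $\bfs$, namely $\sigma^2(\bfy) = \bfs(\bfy,\bfy) = \|\bfb\|^2 P_{jk} y^j y^k$. Writing $P_{jk} = r_{jl} P^l{}_k$ and using that $P^l{}_k y^k = (\bfy_{\perp})^l$ is the $\bfr$-orthogonal projection of $\bfy$ onto $\bfb^{\perp}$, one gets $P_{jk} y^j y^k = \bfr(\bfy_{\perp}, \bfy)$. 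Since the decomposition $\bfy = \bfy_{\|} + \bfy_{\perp}$ of Eq.~\eqref{vecdecomp} is $\bfr$-orthogonal, $\bfr(\bfy_{\perp}, \bfy) = \bfr(\bfy_{\perp}, \bfy_{\perp}) = \rho(\bfy_{\perp})^2$. Hence $\sigma(\bfy) = \|\bfb\|\,\rho(\bfy_{\perp}) = \rho(\bfb)\,\rho(\bfy_{\perp})$, which is exactly the term appearing in $F^{\pm}_{\bfb} = \rho \pm \sigma$ of Eq.~\eqref{bmfd}. Thus the two Finsler norms agree.

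Next I would confirm that $\bfs$ meets the hypotheses required of a bipartite form. Symmetry is immediate from $P_{jk} = r_{jk} - b_j b_k/\|\bfb\|^2$, where $b_j$ are the components of the $1$-form $l_1(\bfb)$. For the eigenvalue condition I would pass to the endomorphism $s_x$ defined by $\bfs(\bfy,\bfy') = \bfr(s_x\bfy,\bfy')$, which in components is $(s_x)^j{}_k = \|\bfb\|^2 P^j{}_k$. The preceding lemma states that $\bfP_\perp$ is idempotent with trace $n-1$ and annihilates $\bfb$, so its eigenvalues are $1$ with multiplicity $n-1$ on $\bfb^{\perp}$ and $0$ with multiplicity one on $\R\bfb$. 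Therefore $s_x$ has eigenvalues $\|\bfb\|^2$ and $0$, all nonnegative, and since $\rho(\bfb) = \|\bfb\| < 1$ by hypothesis they lie in $[0,1)$, precisely as a bipartite form requires.

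Finally, identifying the slit completes the match: since $\ker(s_x) = \ker(\bfP_\perp) = \R\bfb(x)$, the subspace $S_x$ of zero-eigenvalue eigenvectors is the line $\R\bfb(x)$, so $S = \cup_{x} \R\bfb(x)$, agreeing with the slit prescribed for the $\bfb$ manifolds in Example~\ref{abmfd}. Combined with the equality of the norms, this exhibits $(M, \cup_x \R\bfb(x), F^{\pm}_{\bfb})$ as the bipartite space $(M,S,F^{\pm})$ attached to $\bfs = \|\bfb\|^2 \bfP_\perp$. I expect no deep obstacle here; the computation is essentially bookkeeping, and the only points needing care are the orthogonality identity $P_{jk}y^j y^k = \rho(\bfy_{\perp})^2$ that links the coordinate form to the intrinsic norm, and the eigenvalue bound, which rests entirely on the assumption $\rho(\bfb) < 1$.
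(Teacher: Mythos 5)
Your proof is correct. The paper in fact states this lemma without any proof: the bipartite property of the $\bfb$ manifolds is attributed to Ref.~\cite{ak11}, and the only related content in the text is Example~\ref{comp}, which records an equivalent expression for the bipartite form via the Hodge star, $\bfs = c_{1,1}^{n-1}(\star\beta\otimes\star\beta)$. Your direct verification supplies exactly the bookkeeping the paper leaves implicit, and it covers all the required hypotheses: the identity $P_{jk}y^jy^k = \bfr(\bfy_\perp,\bfy_\perp) = \rho(\bfy_\perp)^2$ shows $\sigma = \rho(\bfb)\,\rho(\bfy_\perp)$ so that $F^\pm = \rho\pm\sigma$ matches Eq.~\eqref{bmfd}; symmetry is manifest from $P_{jk} = r_{jk} - b_jb_k/\|\bfb\|^2$; idempotency, trace $n-1$, and $P^j{}_k b^k = 0$ give eigenvalues $\{0,\|\bfb\|^2\}\subset[0,1)$ for $s_x$; and $\ker(s_x) = \R\bfb(x)$ identifies the bipartite slit with the slit prescribed in Example~\ref{abmfd}. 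The one hypothesis worth flagging explicitly is $\bfb(x)\neq 0$, without which $\bfP_\perp$ is undefined; the paper makes the same implicit assumption when it writes the projector.
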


The $\bfa$ spaces are bipartite spaces with bipartite form given as
$s_{jk} 
= \|\bfa\|^2 (\bfP_\parallel)_{jk}
= \|\bfa\|^2 (\bfI - \bfP_\perp)_{jk}$.
This reflects the complementarity of the $\bfa$ and $\bfb$ spaces
described in Example \ref{comp}.

For the $\bfb$ spaces,
taking the $y^j$-derivative of $\Delta^2=y^j s_{jk} y^k$
shows that $\Delta \Delta_j = \|\bfb\|^2 (Py)_j$.
This implies directly that $\Delta_j$ annihilates $\bfb$,
$\Delta_j b^k= 0$,
that the $r$-norm of $\Delta_j$ is given by
$\|{\bf \Delta}\|^2 = \Delta_j \Delta^j = \|\bfb\|^2$,
and that $\rho^j \Delta_j = \Delta/\rho$.
The latter expression also follows from 
the 1-homogeneity of $F$ and of $\rho$,
which together imply the 1-homogeneity of $\Delta$.

Substitution of the result 
$\Delta \Delta_j = \|\bfb\|^2 (Py)_j$
into the $y^k$-derivative of $\Delta \Delta_j$ gives
$\Delta \Delta_{jk} = \|\bfb\|^2 P_{jk} 
- \frac{\|\bfb\|^4}{\Delta^2} (Py)_j(Py)_k$. 
With these expressions in hand, and introducing 
${\widetilde\Delta}^j{}_k = \frac{\Delta}{\|\bfb\|^2} \Delta^j{}_k$,
calculation yields the following lemma.

\begin{lemma}
\label{delproj}
The quantity ${\widetilde\Delta}^j{}_k$ is a projector,
${\widetilde\Delta}^j{}_k {\widetilde\Delta}^k{}_l = {\widetilde\Delta}^j{}_l$.
Its trace is 
${\widetilde\Delta}^j{}_k \delta^k{}_j = n-2$,
and it annihilates $\bfb$ and ${\bf\Delta}$: 
${\widetilde\Delta}^j{}_k b^k = 0$
and ${\widetilde\Delta}^j{}_k \Delta^k = 0$.
\end{lemma}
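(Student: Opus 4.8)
The plan is to reduce $\widetilde\Delta^j{}_k$ to the difference of the perpendicular projector $P^j{}_k$ and a single rank-one term, after which all four assertions follow by short index manipulations. Starting from the expression $\Delta\Delta_{jk} = \|\bfb\|^2 P_{jk} - \frac{\|\bfb\|^4}{\Delta^2}(Py)_j(Py)_k$ derived just above the lemma, I would raise the index $j$ with the Riemannian metric $r$ and multiply by $\Delta/\|\bfb\|^2$ to obtain
$$
\widetilde\Delta^j{}_k
= P^j{}_k - \frac{\|\bfb\|^2}{\Delta^2}(Py)^j(Py)_k ,
$$
where $(Py)^j = P^j{}_k y^k = r^{jl}(Py)_l$. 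This identification is the only substantive step; everything else is bookkeeping on this formula.

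Next I would record the one scalar identity that drives the whole computation, namely the $r$-norm of $(Py)$. Using $\Delta\Delta_j = \|\bfb\|^2 (Py)_j$ together with $\Delta_j\Delta^j = \|\bfb\|^2$ gives $(Py)_j(Py)^j = \frac{\Delta^2}{\|\bfb\|^4}\Delta_j\Delta^j = \Delta^2/\|\bfb\|^2$, so the scalar $\frac{\|\bfb\|^2}{\Delta^2}(Py)_j(Py)^j$ equals exactly $1$. I would also note the two facts $P^j{}_k(Py)^k = (Py)^j$ and $(Py)_k P^k{}_l = (Py)_l$, both instances of the idempotency of $P$ applied to the vector $Py$, and that $P$ annihilates $\bfb$, so $(Py)_k b^k = P_{km}y^m b^k = 0$.

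With these in hand the verifications are immediate. For idempotency I would expand $\widetilde\Delta^j{}_k\widetilde\Delta^k{}_l$ into four terms: the $PP$ term gives $P^j{}_l$; the two cross terms each collapse to $-\frac{\|\bfb\|^2}{\Delta^2}(Py)^j(Py)_l$ using $P(Py)=(Py)$; and the quartic term equals $\frac{\|\bfb\|^4}{\Delta^4}(Py)_k(Py)^k\,(Py)^j(Py)_l = \frac{\|\bfb\|^2}{\Delta^2}(Py)^j(Py)_l$ by the norm identity. The three rank-one contributions thus combine to $-\frac{\|\bfb\|^2}{\Delta^2}(Py)^j(Py)_l$ and restore $\widetilde\Delta^j{}_l$. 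The trace is ${\rm tr}(P)-\frac{\|\bfb\|^2}{\Delta^2}(Py)^j(Py)_j = (n-1)-1 = n-2$. Annihilation of $\bfb$ follows from $P^j{}_k b^k=0$ and $(Py)_k b^k=0$; annihilation of ${\bf\Delta}$ follows because $\Delta^k = \frac{\|\bfb\|^2}{\Delta}(Py)^k$ is proportional to $(Py)^k$, and $\widetilde\Delta^j{}_k(Py)^k = (Py)^j - \frac{\|\bfb\|^2}{\Delta^2}(Py)^j(Py)_k(Py)^k = 0$ by the same norm identity.

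The only place where anything could go wrong is the precise normalization: idempotency holds exactly because $\frac{\|\bfb\|^2}{\Delta^2}(Py)_k(Py)^k=1$, which in turn rests on the earlier identity $\|{\bf\Delta}\|^2 = \|\bfb\|^2$. I therefore regard confirming that this constant is exactly $1$, rather than merely some constant, as the crux of the argument; once it is in place, the projector property, the value $n-2$ of the trace, and both annihilation statements are all forced.
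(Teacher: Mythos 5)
Your proof is correct and follows essentially the same route as the paper, which derives $\Delta\Delta_{jk} = \|\bfb\|^2 P_{jk} - \tfrac{\|\bfb\|^4}{\Delta^2}(Py)_j(Py)_k$ and then leaves the verification as an unstated ``calculation''; you have simply carried out that calculation explicitly via the decomposition ${\widetilde\Delta}^j{}_k = P^j{}_k - \tfrac{\|\bfb\|^2}{\Delta^2}(Py)^j(Py)_k$. Your identification of the normalization $\tfrac{\|\bfb\|^2}{\Delta^2}(Py)_j(Py)^j = 1$ (equivalently $\|{\bf\Delta}\|^2 = \|\bfb\|^2$) as the crux is exactly right, and all four assertions do follow from it together with the idempotency of $\bfP_\perp$.
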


An application of Euler's Theorem~\cite{bcs}
using the 1-homogeneity of $\rho$ implies
$y^j = r^{jk}\rho\rho_k$.
A second application of Euler's Theorem 
using the 1-homogeneity of $\Delta$ 
then reveals that 
$\Delta_{jk} \rho^j = 0$.
The projector ${\widetilde\Delta}^j{}_k$ 
therefore also annihilates $\rho^k$
and hence by Lemma \ref{delproj} annihilates $F^j$ as well:
${\widetilde\Delta}^j{}_k \rho^k= 0$
and 
${\widetilde\Delta}^j{}_k F^k= 0$.

The above elegant features of $\Delta_j$ 
and the projector ${\widetilde\Delta}^j{}_k$ 
stem from the properties of the perpendicular projector $P^j{}_k$.
Moreover,
calculation shows that $\Delta_j$  and ${\widetilde\Delta}^j{}_k$ 
are eigenvectors of $P^j{}_k$ in the following sense.

\begin{lemma}
\label{projproj}
The actions of the perpendicular projector $\bfP_\perp$ 
on $\Delta_j$ and on the projector ${\widetilde\Delta}^j{}_k$ are 
$P^j{}_k \Delta^k = \Delta^j$
and $P^j{}_k \Delta^k{}_l = \Delta^j{}_l$.
\end{lemma}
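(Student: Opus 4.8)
The plan is to derive both identities directly from the explicit formulas for $\Delta_j$ and $\Delta_{jk}$ established above, together with the idempotency $P^j{}_k P^k{}_l = P^j{}_l$ of the perpendicular projector. The guiding observation is that every term appearing in $\Delta^j$ and in $\Delta^j{}_k$ already lies in the image of $\bfP_\perp$ on the relevant free index, so that applying $P$ a second time returns the same object. In particular, the vector $(Py)^j = r^{ji}(Py)_i = P^j{}_k y^k$ is manifestly fixed by $P$, since $P^i{}_j(Py)^j = P^i{}_j P^j{}_k y^k = P^i{}_k y^k = (Py)^i$.

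For the first identity, I would start from $\Delta \Delta_j = \|\bfb\|^2 (Py)_j$ and raise the free index to obtain $\Delta^j = (\|\bfb\|^2/\Delta)\,(Py)^j$. Contracting with $P^i{}_j$ and using that $P$ fixes $(Py)$ gives $P^i{}_j \Delta^j = (\|\bfb\|^2/\Delta)\,P^i{}_j (Py)^j = (\|\bfb\|^2/\Delta)\,(Py)^i = \Delta^i$, which is precisely $P^j{}_k \Delta^k = \Delta^j$.

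For the second identity, I would raise an index in $\Delta \Delta_{jk} = \|\bfb\|^2 P_{jk} - (\|\bfb\|^4/\Delta^2)(Py)_j(Py)_k$ to get $\Delta \Delta^j{}_k = \|\bfb\|^2 P^j{}_k - (\|\bfb\|^4/\Delta^2)(Py)^j(Py)_k$. Contracting on the left with $P^i{}_j$ and applying idempotency twice---once as $P^i{}_j P^j{}_k = P^i{}_k$ on the $P_{jk}$ term and once as $P^i{}_j(Py)^j = (Py)^i$ on the rank-one term---reproduces the right-hand side with $i$ in place of $j$. Dividing by $\Delta$ yields $P^i{}_j \Delta^j{}_k = \Delta^i{}_k$. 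Equivalently, since $\widetilde\Delta^j{}_k = (\Delta/\|\bfb\|^2)\,\Delta^j{}_k$, this says that $\bfP_\perp$ fixes the projector $\widetilde\Delta^j{}_k$ on its upper index.

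The computation is essentially mechanical, so there is no serious obstacle here; the only point requiring a little care is the index bookkeeping when raising indices and keeping straight which slot the projector $P$ acts on. The genuine content is already packaged in the two formulas for $\Delta_j$ and $\Delta_{jk}$ that follow from $\Delta\Delta_j = \|\bfb\|^2 (Py)_j$. Once those are in hand, the eigenvector property (with eigenvalue one) is an immediate consequence of $P^2 = P$ and the fact that $(Py)$ lies in the range of $P$.
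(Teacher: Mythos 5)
Your proof is correct and follows essentially the same route as the paper, which obtains the lemma by exactly this calculation: substitute the formulas $\Delta\Delta_j = \|\bfb\|^2 (Py)_j$ and $\Delta \Delta_{jk} = \|\bfb\|^2 P_{jk} - \tfrac{\|\bfb\|^4}{\Delta^2}(Py)_j(Py)_k$, raise indices with $\bfr$, and invoke the idempotency $P^j{}_k P^k{}_l = P^j{}_l$ so that everything in the range of $\bfP_\perp$ is fixed by it. The only implicit point, which you handle correctly, is that $\Delta \neq 0$ away from the slit, so dividing by $\Delta$ is legitimate.
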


To obtain the characteristic tensor for the $\bfb$ spaces,
we need to evaluate the quantities
$\kappa$ and $g^{kl}\Delta_k\Delta_{lj}$
appearing in the statement of Theorem \ref{bipthm}.
For this purpose,
the explicit form of the inverse fundamental tensor is required.

\begin{lemma}
\label{invmetric}
The inverse fundamental tensor $g^{jk}$ for the $\bfb$ spaces is~\cite{ak11}
$$
g^{jk} =
\frac {\rho} {F} 
\left(
r^{jk}
+ \frac{(\bfb\cdot\bfy)^2 \rho}
{\Delta^2(\Delta + \|\bfb\|^2 \rho)} \lambda^j\lambda^k
- \frac {\|\bfb\|^2 \rho}{(\Delta + \|\bfb\|^2 \rho)} P^{jk}
\right),
$$
where
$\lambda_j = \frac{\bfb\cdot\bfy}{F} \rho_j - b_j$,
$\lambda^j = r^{jk}\lambda_k$.
\end{lemma}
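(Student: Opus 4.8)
The plan is to verify directly that the proposed tensor is a two-sided inverse of the fundamental tensor, that is, $g^{jk}g_{kl}=\delta^j{}_l$. First I would assemble $g_{kl}$ in closed form. Since $g_{kl}=\tfrac12(F^2)_{kl}=F_kF_l+FF_{kl}$ with $F=\rho+\Delta$, substituting the $\bfb$-space identities already recorded above---the Riemannian relation $\rho\rho_{kl}=r_{kl}-\rho_k\rho_l$ together with $\Delta\Delta_k=\|\bfb\|^2(Py)_k$ and $\Delta\Delta_{kl}=\|\bfb\|^2 P_{kl}-\Delta_k\Delta_l$---collects $g_{kl}$ into a multiple of $r_{kl}$, a multiple of $P_{kl}$, and rank-one pieces built from the covectors $\rho_k$ and $\Delta_k$. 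Writing this with one index raised gives $g^j{}_k=A\,\delta^j{}_k+(\text{correction})$ with $A=F(\Delta+\|\bfb\|^2\rho)/(\rho\Delta)$.

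The structural observation that makes the inversion tractable is that the correction is supported on a two-dimensional plane. Setting $\beta=\bfb\cdot\bfy$, one has $\Delta^k\propto\bfy_\perp^k$ while $\rho^k$ is a combination of $\bfb^k$ and $\bfy_\perp^k$, so every non-identity piece of $g^j{}_k$ lies in the plane $\Pi=\mathrm{span}\{\bfb,\bfy_\perp\}$. On the orthogonal complement $\Pi^\perp$, of dimension $n-2$, the projector $\bfP_\perp$ acts as the identity and all vector corrections vanish, so $g$ reduces to the scalar $A$ times the identity; the proposed formula collapses to the reciprocal scalar there, because $\lambda^k\in\Pi$ kills the $\lambda^j\lambda^k$ term and $r^{jk}-\tfrac{\|\bfb\|^2\rho}{\Delta+\|\bfb\|^2\rho}P^{jk}$ becomes the required multiple of the identity. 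It therefore suffices to check the inverse on the two vectors spanning $\Pi$: compute $g_{kl}\bfb^l$ and $g_{kl}\bfy_\perp^l$, apply the proposed $g^{jk}$, and confirm that $\bfb^j$ and $\bfy_\perp^j$ are recovered. Since $\{\bfb,\bfy_\perp\}$ together with any basis of $\Pi^\perp$ spans the tangent space and both tensors are symmetric, these checks establish $g^{jk}g_{kl}=\delta^j{}_l$.

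The main obstacle is the scalar bookkeeping in this two-dimensional verification, where the coefficients of the $\bfP_\perp$ and $\lambda^j\lambda^k$ terms must conspire exactly. The inputs are the orthogonality and normalization relations already derived: $\Delta_kb^k=0$, $\Delta_k\Delta^k=\|\bfb\|^2$, $\rho^k\Delta_k=\Delta/\rho$, $\rho^k\rho_k=1$, and $\rho_kb^k=\beta/\rho$, together with $\Delta^2=\|\bfb\|^2\|\bfy_\perp\|^2$ and $P^j{}_k\Delta^k=\Delta^j$ from Lemma \ref{projproj}. The genuinely delicate point is the role of $\lambda_k=\tfrac{\beta}{F}\rho_k-b_k$: within $\Pi$ the projector $\bfP_\perp$ already supplies the $\bfy_\perp$-direction, so $\lambda$ is precisely the complementary direction needed to split the inverse of the $2\times2$ block on $\Pi$, and showing that its coefficient equals $(\bfb\cdot\bfy)^2\rho/[\Delta^2(\Delta+\|\bfb\|^2\rho)]$ is the crux. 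Equivalently, one may bypass the explicit $2\times2$ inversion by applying the Sherman--Morrison--Woodbury identity to $g^j{}_k=A\,\delta^j{}_k+(\text{rank-two on }\Pi)$, assembling and inverting the small capacitance matrix; the effort is comparable and the same relations enter.
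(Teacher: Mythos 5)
The paper offers no proof of Lemma \ref{invmetric}: the formula is simply quoted from Ref.~\cite{ak11} (see also Ref.~\cite{krt12}), and only its consequences (Lemmas \ref{quantity1} and \ref{quantity2}) are worked out. Your self-contained verification is therefore a genuinely different route, and its skeleton is sound. The closed form you assemble for the fundamental tensor is correct: using $\rho\rho_{kl}=r_{kl}-\rho_k\rho_l$, $\Delta\Delta_k=\|\bfb\|^2(Py)_k$, and $\Delta\Delta_{kl}=\|\bfb\|^2P_{kl}-\Delta_k\Delta_l$, one finds
$$
g_{kl}=\frac{F}{\rho}\,r_{kl}+\frac{F\|\bfb\|^2}{\Delta}\,P_{kl}
-\frac{1}{\rho\Delta}\,\mu_k\mu_l\,,\qquad \mu_k=\Delta\rho_k-\rho\Delta_k\,,
$$
and after substituting $P^j{}_k=\delta^j{}_k-b^jb_k/\|\bfb\|^2$ the non-scalar part is indeed rank two and supported on $\Pi=\mathrm{span}\{\bfb,\bfy_\perp\}$, with scalar $A=F(\Delta+\|\bfb\|^2\rho)/(\rho\Delta)$ on $\Pi^\perp$, exactly as you claim. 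Since $\bfg$ and the candidate inverse are both $r$-self-adjoint and both preserve $\Pi$, they are block diagonal with respect to $\Pi\oplus\Pi^\perp$, so your reduction to the scalar check on $\Pi^\perp$ (which you complete: $\tfrac{\rho}{F}\bigl(1-\tfrac{\|\bfb\|^2\rho}{\Delta+\|\bfb\|^2\rho}\bigr)=1/A$) plus a $2\times2$ check on $\{\bfb,\bfy_\perp\}$ is legitimate, and the identities you list ($\Delta_kb^k=0$, $\Delta_k\Delta^k=\|\bfb\|^2$, $\rho^k\Delta_k=\Delta/\rho$, $\rho_kb^k=\bfb\cdot\bfy/\rho$, $P^j{}_k\Delta^k=\Delta^j$) suffice to close that finite computation. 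The one point to handle explicitly is the locus $\bfb\cdot\bfy=0$, where $\mu_k$ and the coefficient of $\lambda^j\lambda^k$ vanish simultaneously, so the $2\times2$ block degenerates consistently (or one argues by continuity). As for what each approach buys: the paper's citation keeps the section short and defers to the physics literature, where such inverses are obtained by rank-update (Sherman--Morrison--Woodbury) arguments of the kind you mention as an alternative; your verification instead makes the key computational input of Theorem \ref{bthm} self-contained and reuses precisely the projector identities (Lemmas \ref{delproj} and \ref{projproj}) that the paper develops anyway, at the cost of the $2\times2$ bookkeeping you flag as the crux.
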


To determine $\kappa$,
we need an explicit expression for  $g^{jk}\Delta_{jk}$
for the $\bfb$ spaces.
Using the form of Lemma \ref{invmetric} for the inverse metric
and the results in Lemmas \ref{delproj} and \ref{projproj}
yields the following result.

\begin{lemma}
\label{quantity1}
The value $\kappa_b$ of $\kappa$ for the $\bfb$ spaces is
$$\kappa_b = n+1 - (n-2)
\frac {F(F-\Delta)\|{\bf\Delta}\|^2}
{\Delta\big(\Delta + (F-\Delta) \|{\bf\Delta}\|^2\big)}.$$
\end{lemma}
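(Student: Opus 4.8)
The plan is to specialize the general formula $\kappa = n+1 - \frac{F^2}{\Delta}g^{kl}\Delta_{kl}$ of Lemma~\ref{mctlem} to the $\bfb$ spaces, so the task reduces to computing the $g$-trace $g^{kl}\Delta_{kl}$. The essential simplification comes from rewriting the second derivative $\Delta_{kl}$ in closed form. Combining the $\bfb$-space expression $\Delta\Delta_{jk} = \|\bfb\|^2 P_{jk} - \frac{\|\bfb\|^4}{\Delta^2}(Py)_j(Py)_k$ with the first-derivative relation $\Delta\Delta_j = \|\bfb\|^2(Py)_j$ eliminates the projected vector $(Py)_j$ in favor of $\Delta_j$ and yields
$$\Delta\,\Delta_{kl} = \|\bfb\|^2 P_{kl} - \Delta_k\Delta_l .$$
Hence $g^{kl}\Delta_{kl} = \frac{1}{\Delta}\big(\|\bfb\|^2 g^{kl}P_{kl} - g^{kl}\Delta_k\Delta_l\big)$, and everything reduces to the two scalar contractions $g^{kl}P_{kl}$ and $g^{kl}\Delta_k\Delta_l$.

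Next I would assemble the scalar identities needed to contract the three-term inverse metric of Lemma~\ref{invmetric} against $P_{kl}$ and against $\Delta_k\Delta_l$. The required ingredients are the projector traces and eigenvector relations of Lemmas~\ref{delproj} and~\ref{projproj} (notably $P^j{}_k\Delta^k = \Delta^j$ and ${\rm tr}\,\bfP_\perp = n-1$), the homogeneity contractions $\rho^j\Delta_j = \Delta/\rho$, $b^j\Delta_j = 0$, and $\Delta^j\Delta_j = \|\bfb\|^2$, together with the auxiliary identity $(\bfb\cdot\bfy)^2 = \|\bfb\|^2\rho^2 - \Delta^2$, which follows by expanding $\Delta^2 = \|\bfb\|^2 P_{jk}y^jy^k$. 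Using $\rho^j = y^j/\rho$ one finds $P^j{}_k\rho^k = (\Delta/\|\bfb\|^2\rho)\,\Delta^j$, which governs the action of $P$ on the vector $\lambda^j$ appearing in $g^{jk}$ and lets me evaluate each piece $r^{jk}P_{jk}$, $\lambda^j\lambda^k P_{jk}$, $P^{jk}P_{jk}$ and their analogues contracted with $\Delta_j\Delta_k$.

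Carrying out the contractions produces closed forms for $g^{kl}P_{kl}$ and $g^{kl}\Delta_k\Delta_l$, each carrying a term proportional to the quartic $(\bfb\cdot\bfy)^4$. I expect the main obstacle to be the cancellation that yields the clean final answer: forming the combination $\|\bfb\|^2 g^{kl}P_{kl} - g^{kl}\Delta_k\Delta_l$ removes the $(\bfb\cdot\bfy)^4$ contributions entirely, since they enter both contractions with the same coefficient, collapsing the result to $(n-2)\|\bfb\|^2\Delta\rho/\big(F(\Delta+\|\bfb\|^2\rho)\big)$. Organizing the bookkeeping so that this cancellation is transparent is the delicate part; the identity $(\bfb\cdot\bfy)^2 = \|\bfb\|^2\rho^2 - \Delta^2$ is what keeps the intermediate expressions manageable.

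Finally I would substitute back to obtain $g^{kl}\Delta_{kl} = (n-2)\|\bfb\|^2\rho/\big(F(\Delta+\|\bfb\|^2\rho)\big)$ and then $\kappa_b = n+1 - \frac{F^2}{\Delta}g^{kl}\Delta_{kl} = n+1 - (n-2)\|\bfb\|^2 F\rho/\big(\Delta(\Delta+\|\bfb\|^2\rho)\big)$. Writing $\rho = F-\Delta$ and replacing $\|\bfb\|^2$ by the equal quantity $\|{\bf\Delta}\|^2$ then recasts the expression in the geometric form stated in the lemma.
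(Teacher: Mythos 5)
Your proposal is correct and takes essentially the paper's route: specialize $\kappa = n+1-\tfrac{F^2}{\Delta}g^{kl}\Delta_{kl}$ from Lemma~\ref{mctlem}, then evaluate $g^{kl}\Delta_{kl}$ by contracting the inverse metric of Lemma~\ref{invmetric} against the second derivative of $\Delta$ using the projector identities of Lemmas~\ref{delproj} and~\ref{projproj}, and your intermediate value $g^{kl}\Delta_{kl}=(n-2)\|\bfb\|^2\rho/\bigl(F(\Delta+\|\bfb\|^2\rho)\bigr)$ is exactly right. The only cosmetic difference is bookkeeping: the paper contracts with the projector ${\widetilde\Delta}_{kl}$, whose annihilation of $\rho_k$ and $b_k$ (hence of $\lambda_k$) kills the middle term of $g^{kl}$ from the outset, whereas your split $\Delta\Delta_{kl}=\|\bfb\|^2P_{kl}-\Delta_k\Delta_l$ produces the $(\bfb\cdot\bfy)^4$ contributions in each piece, which do cancel in the combination just as you anticipate.
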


Note that for $n=2$ this reduces to the usual factor
in the expression for the Matsumoto tensor~\cite{mh}
for the Randers and $\bfa$ spaces, $\kappa_b = n+1 =  \kappa_a$. 

\begin{lemma}
\label{quantity2}
For the $\bfb$ spaces, $g^{kl}\Delta_k\Delta_{lj} = 0$.
\end{lemma}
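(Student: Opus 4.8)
The plan is to show that the vector $g^{kl}\Delta_k$ (with free index $l$) lies in the span of vectors that are annihilated by the symmetric quantity $\Delta_{lj}$, so that the contraction $g^{kl}\Delta_k\Delta_{lj}$ vanishes identically on the slit tangent bundle. Concretely, I would first record the three annihilation identities
$$
\Delta_{lj}\Delta^l = 0, \qquad \Delta_{lj}\rho^l = 0, \qquad \Delta_{lj}b^l = 0,
$$
and then verify that $g^{kl}\Delta_k$ is a linear combination of $\Delta^l$ and $\lambda^l = \tfrac{\bfb\cdot\bfy}{F}\rho^l - b^l$, both of which lie in the span of these three vectors.

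The first identity follows from Lemma \ref{delproj}: since ${\widetilde\Delta}^j{}_k\Delta^k = 0$ and ${\widetilde\Delta}^j{}_k = \tfrac{\Delta}{\|\bfb\|^2}\Delta^j{}_k$, lowering the free index gives $\Delta_{lk}\Delta^k = 0$. The second identity $\Delta_{jk}\rho^j = 0$ is already established in the text via Euler's theorem. For the third, I would use the explicit second derivative $\Delta\Delta_{jk} = \|\bfb\|^2 P_{jk} - \tfrac{\|\bfb\|^4}{\Delta^2}(Py)_j(Py)_k$: contracting with $b^l$ and using $P^j{}_k b^k = 0$, so that $b^l P_{lj} = 0$ and $b^l(Py)_l = 0$, makes both terms vanish and yields $\Delta_{lj}b^l = 0$. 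In particular $\lambda^l\Delta_{lj} = \tfrac{\bfb\cdot\bfy}{F}\rho^l\Delta_{lj} - b^l\Delta_{lj} = 0$.

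Next I would contract the inverse fundamental tensor of Lemma \ref{invmetric} with $\Delta_k$, treating its three terms separately. The Riemannian piece gives $r^{kl}\Delta_k = \Delta^l$; the projector piece gives $P^{kl}\Delta_k = \Delta^l$ by Lemma \ref{projproj}; and the $\lambda\lambda$ piece gives a scalar multiple of $\lambda^l$, where the scalar $\lambda^k\Delta_k = \tfrac{\bfb\cdot\bfy}{F}\rho^k\Delta_k - b^k\Delta_k = \tfrac{(\bfb\cdot\bfy)\Delta}{F\rho}$ is evaluated using $\rho^k\Delta_k = \Delta/\rho$ and $b^k\Delta_k = 0$. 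Collecting terms shows that $g^{kl}\Delta_k$ is a combination of $\Delta^l$ and $\lambda^l$ only, so contracting with $\Delta_{lj}$ yields zero by the identities above.

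The computation is essentially routine; the only point demanding care is the bookkeeping with the two distinct metrics, keeping the raising and lowering by the Riemannian $r^{jk}$ (used for $\Delta^j$, $\rho^j$, $b^j$, and $\lambda^j$) cleanly separated from the contraction against the inverse fundamental tensor $g^{kl}$. A convenient shortcut, should the explicit coefficients prove cumbersome, is to bypass evaluating them altogether: since $g^{kl}\Delta_k$ is manifestly assembled from $r^{kl}\Delta_k$, $P^{kl}\Delta_k$, and $\lambda^l$, it automatically lands in $\mathrm{span}\{\Delta^l,\lambda^l\}$, and that membership alone suffices for the conclusion.
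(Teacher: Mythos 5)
Your proposal is correct and follows essentially the same route as the paper: the paper's proof is precisely the observation that the explicit inverse fundamental tensor of Lemma \ref{invmetric}, contracted with $\Delta_k$, lands in the span of $\Delta^l$ and $\lambda^l$, both of which are annihilated by $\Delta_{lj}$ via the projector identities of Lemma \ref{delproj} (together with $\rho^l\Delta_{lj}=0$ and $b^l\Delta_{lj}=0$ established in the surrounding text). You have simply written out in full the computation that the paper compresses into a citation of those two lemmas.
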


This follows from Lemmas \ref{delproj} and \ref{invmetric}.
The result depends on the properties of the projectors, 
and hence it is a special feature of the $\bfb$ spaces.

Combining Lemmas \ref{quantity1} and \ref{quantity2}
with Theorem \ref{bipthm}
directly yields the desired characteristic tensor for the $\bfb$ spaces,
thereby proving Theorem \ref{bthm}.

The $\bfg$ trace of $\bfB$ vanishes,
$g^{jk} B_{jkl} = 0$,
in concordance with Remark~\ref{strace}.

A characteristic tensor $\bfT$ with the structure
$\bfT \sim \bfC - \sum_{jkl} \bfI \times (*)$
is said to be {\em quasi $C$ reducible}~\cite{mh}.
The form for the characteristic tensor $B_{jkl}$ given by Theorem \ref{bthm} 
reveals that $\bfb$ spaces are quasi $C$ reducible,
where the factor $(*)$ 
takes the special form of a shift of the angular metric.

\bmhead{Acknowledgments}

This work is supported in part 
by the U.S.\ Department of Energy 
under grant {DE}-SC0010120, the Simons Foundation 
under grants 713226 and 00013966,
and by the Indiana University Center for Spacetime Symmetries.


\begin{thebibliography}{99}

\bibitem{br}
B.~Riemann,
{\it \"Uber die Hypothesen welche der Geometrie zu Grunde liegen},
in R.~Baker, C.~Christensen, and H.~Orde,
{\it Bernhard Riemann, Collected Papers},
Kendrick Press, Heber City, Utah, 2004.

\bibitem{pf}
P.~Finsler,
{\it \"Uber Kurven und Fl\"achen in allgemeinen R\"aumen},
University of G\"ottingen dissertation, 1918;
Verlag Birkh\"auser, Basel, Switzerland, 1951.

\bibitem{ec}
E.~Cartan,
{\it Les Espaces de Finsler},
Hermann et companie, Paris, France, 1934.

\bibitem{ad}
A.~Deicke,
{\it \"Uber die Finsler-Raume mit $A_i=0$},
Arch.\ Math.\ {\bf 4}, 45 (1953).

\bibitem{gr}
G.~Randers, 
{\it On an Asymmetric Metric in the Four-Space of General Relativity},
Phys.\ Rev.\ {\bf 59}, 195 (1941).

\bibitem{mm}
M.~Matsumoto,
{\it On $C$-Reducible Finsler Spaces},
Tensor, NS {\bf 24}, 29 (1972).

\bibitem{mh}
M.~Matsumoto and S.~H\=oj\=o,
{\it A Conclusive Theorem for $C$-Reducible Finsler Spaces},
Tensor, NS {\bf 32}, 225 (1978).

\bibitem{ak11}
V.A.~Kosteleck\'y,
{\it Riemann-Finsler Geometry and Lorentz-Violating Kinematics},
Phys.\ Lett.~B {\bf 701}, 137 (2011).

\bibitem{ez}
E.~Zermelo, 
{\it \"Uber das Navigationsproblem bei ruhender
oder ver\"anderlicher Windverteilung},
Z.\ Agnew.\ Math.\ Mech.\ {\bf 11}, 114 (1931).

\bibitem{zs}
Z.~Shen,
{\it Finsler Metrics with $K=0$ and $S=0$},
Canad.\ J.\ Math.\ {\bf 55}, 112 (2003).

\bibitem{dbcr}
D.\ Bao and C.\ Robles,
in D.\ Bao, R.L.\ Bryant, S.-S.\ Chern, and Z.\ Shen, eds.,
{\it A Sampler of Riemann-Finsler Geometry},
Cambridge University Press, Cambridge, 2004.

\bibitem{ak04}
V.A.~Kosteleck\'y,
{\it Gravity, Lorentz Violation, and the Standard Model},
Phys.\ Rev.~D {\bf 69}, 105009 (2004).

\bibitem{mm92}
M.~Matsumoto,
{\it Theory of Finsler Spaces with $(\alpha,\beta)$ Metric},
Rep.\ Math.\ Phys.\ {\bf 31}, 43 (1992).

\bibitem{cs05}
S.S.~Chern and Z.~Shen,
{\it Riemann-Finsler Geometry},
World Scientific, Singapore, 2005.

\bibitem{krt12}
V.A.~Kosteleck\'y, N.~Russell, and R.~Tso, 
{\it Bipartite Riemann-Finsler Geometry and Lorentz Violation},
Phys.\ Lett.~B {\bf 716}, 470 (2012).

\bibitem{kr10}
V.A.~Kosteleck\'y and N.~ Russell, 
{\it Classical Kinematics for Lorentz Violation},
Phys.\ Lett.~B {\bf 693}, 443 (2010).

\bibitem{rs18}
J.A.A.S.~Reis and M.~Schreck,
{\it Leading-Order Classical Lagrangians for the
Nonminimal Standard-Model Extension},
Phys.\ Rev.\ D~{\bf 97}, 065019 (2018).

\bibitem{ek18}
B.R.~Edwards and V.A.~Kosteleck\'y,
{\it Riemann-Finsler Geometry and Lorentz-Violating Scalar Fields},
Phys.\ Lett.~B {\bf 786}, 319 (2018).

\bibitem{sma19}
J.E.G.~Silva, R.V.~Maluf, and C.A.S.~Almeida,
{\it Bipartite-Finsler Symmetries},
Phys.\ Lett.\ B {\bf 798}, 135009 (2019).

\bibitem{fl15}
J.~Foster and R.~Lehnert,
{\it Classical-Physics Applications for Finsler $b$ Space},
Phys.\ Lett.\ B~{\bf 746}, 164 (2015).

\bibitem{ck97}
D.~Colladay and V.A.~Kosteleck\'y,
{\it CPT Violation and the Standard Model},
Phys.~Rev.~D {\bf 55}, 6760 (1997).

\bibitem{tables}
V.A.~Kosteleck\'y and N.~Russell,
{\it Data Tables for Lorentz and CPT Violation},
Rev.\ Mod.\ Phys.\ {\bf 83}, 11 (2011);
arXiv:0801.0287v19 (2026).

\bibitem{jb70}
J.K.\ Beem,
{\it Indefinite Finsler Spaces and Timelike Spaces},
Canad.\ J.\ Math.\ {\bf 22}, 1035 (1970).

\bibitem{gsa85}
G.S.\ Asanov,
{\it Finsler Geometry, Relativity, and Gauge Theories},
Reidel, Dordrecht, 1985.

\bibitem{ma94}
R.\ Miron and M.\ Anastasiei,
{\it The Geometry of Lagrange Spaces: Theory and Applications},
Kluwer Academic, Dordrecht, 1994.

\bibitem{bf00}
A.\ Bejancu and H.R. Farran,
{\it Geometry of Pseudo-Finsler Submanifolds},
Kluwer Academic, Dordrecht, 2000.

\bibitem{pw11}
C.\ Pfeifer and M.N.R.\ Wohlfarth,
{\it Causal Structure and Electrodynamics on Finsler Spacetimes},
Phys.\ Rev.\ D {\bf 84}, 044029 (2011).

\bibitem{lph12}
C.\ L\"ammerzahl, V.\ Perlick, and W.\ Hasse,
{\it Observable Effects in a Class of Spherically Symmetric
Static Finsler Spacetimes},
Phys.\ Rev.\ D {\bf 86}, 104042 (2012).

\bibitem{cm15}
D.~Colladay and P.~McDonald,
{\it Singular Lorentz-Violating Lagrangians and Associated Finsler Structures},
Phys.\ Rev.\ D {\bf 92}, 085031 (2015).

\bibitem{cs16}
E.~Caponio and G.~Stancarone,
{\it Standard Static Finsler Spacetimes},
Int.\ J.\ Geom.\ Meth.\ Mod.\ Phys.\ {\bf 13}, 1650040 (2016).

\bibitem{em17}
E.~Minguzzi,
{\it Affine Sphere Spacetimes Which Satisfy the Relativity Principle},
Phys.\ Rev.\ D {\bf 95}, 024019 (2017).

\bibitem{js20}
M.A.\ Javaloyes and M.\ S\'anchez,
{\it On the Definition and Examples of Cones and Finsler Spacetimes},
Revista de la Real Academia de Ciencias Exactas, 
F\'isicas y Naturales, Serie A, Matem\'aticas
{\bf 114}, 30 (2020).

\bibitem{hpv22}
M.~Hohmann, C.~Pfeifer and N.~Voicu,
{\it Mathematical Foundations for Field Theories on Finsler Spacetimes},
J.\ Math.\ Phys.\ {\bf 63}, 032503 (2022).

\bibitem{lp18}
C.~L\"ammerzahl and V.~Perlick,
{\it Finsler Geometry as a Model for Relativistic Gravity},
Int.\ J.\ Geom.\ Meth.\ Mod.\ Phys.\ {\bf 15}, 1850166 (2018).

\bibitem{bv18}
L.~Bubuianu and S.I.~Vacaru,
{\it Axiomatic Formulations of Modified Gravity Theories 
with Nonlinear Dispersion Relations and Finsler-Lagrange-Hamilton Geometry},
Eur.\ Phys.\ J.\ C {\bf 78}, 969 (2018).

\bibitem{kl21}
V.A.~Kosteleck\'y and Z.~Li,
{\it Searches for Beyond-Riemann Gravity},
Phys.\ Rev.\ D {\bf 104}, 044054 (2021).

\bibitem{mt21}
M.D.C.~Torri,
{\it Quantum Gravity Phenomenology Induced in the Propagation of UHECR, 
a Kinematical Solution in Finsler and Generalized Finsler Spacetime},
Galaxies {\bf 9}, 103 (2021).

\bibitem{rl21}
J.J.~Relancio and S.~Liberati,
{\it Towards a Geometrical Interpretation of Rainbow Geometries},
Class.\ Quant.\ Grav.\ {\bf 38}, 135028 (2021).

\bibitem{jsv22}
M.\'A.~Javaloyes, M.~S\'anchez, and F.F.~Villase\~nor,
{\it The Einstein-Hilbert-Palatini Formalism in Pseudo-Finsler Geometry},
Adv.\ Theor.\ Math.\ Phys.\ {\bf 26}, 3563 (2022).

\bibitem{klmss22}
V.A.~Kosteleck\'y, R.~Lehnert, N.~McGinnis, M.~Schreck, and B.~Seradjeh,
{\it Lorentz Violation in Dirac and Weyl Semimetals},
Phys.\ Rev.\ Res.~{\bf 4}, 023106 (2022). 

\bibitem{zm23}
J.~Zhu and B.Q.~Ma,
{\it Lorentz Violation in Finsler Geometry},
Symmetry {\bf 15}, 978 (2023).

\bibitem{bs23}
B.~Shen,
{\it Gravitational Lensing Effect in the Universe with a Finsler Background},
J.\ Geom.\ Phys.\ {\bf 194}, 104999 (2023).

\bibitem{hyzl24}
K.J.~He, J.T.~Yao, X.~Zhang, and X.~Li,
{\it Shadows and Photon Motions in the Axially Symmetric Finslerian Extension 
of a Schwarzschild Black Hole},
Phys.\ Rev.\ D {\bf 109}, 064049 (2024).

\bibitem{ncnpm24}
Z.~Nekouee, H.~Chaudhary, S.K.~Narasimhamurthy, S.K.J.~Pacif, and M.~Malligawad,
{\it Cosmological Tests of the Dark Energy Models 
in Finsler-Randers Space-time},
JHEAp {\bf 44}, 19 (2024).

\bibitem{st25}
G.~Miliaresis, K.~Topaloglou, I.~Ampazis, N.~Androulaki, 
E.~Kapsabelis, E.N.~Saridakis, P.C.~Stavrinos and A.~Triantafyllopoulos,
{\it Reissner-Nordstr{\"o}m and Kerr-like Solutions 
in Finsler-Randers Gravity},
Universe {\bf 11}, 201 (2025).

\bibitem{mcw12}
M.C.~Werner,
{\it Gravitational Lensing in the Kerr-Randers Optical Geometry},
Gen.~Rel.~Grav.~{\bf 44}, 3047 (2012).

\bibitem{ms25}
M.\ S\'anchez,
{\it On the foundations and applications of Lorentz-Finsler Geometry},
arXiv:2511.04645.

\bibitem{pvfp25}
C.~Pfeifer, N.~Voicu, A.~Fridl-Sz\'asz, and E.~Popovici-Popescu,
{\it From Kinetic Gases to an Exponentially Expanding Universe 
-- The Finsler-Friedmann Equation},
JCAP {\bf 10}, 050 (2025).

\bibitem{bcs}
D.~Bao, S.-S.~Chern, and Z.~Shen,
{\it An Introduction to Riemann-Finsler Geometry},
Springer, New York, 2000.

\bibitem{ah}
A.~Hatcher,
{\it Algebraic Topology},
Cambridge University Press, Cambridge, England, 2002.

\bibitem{pcd}
P.C.~Dupin, 
{\it Applications de G\'eometrie et de M\'echanique a la Marine},
Bachelier, Paris, France, 1822.

\end{thebibliography}
\end{document}